\documentclass[11pt,reqno]{amsart}
\usepackage{amsmath,amsthm,amssymb,mathtools}
\usepackage[letterpaper,margin=1.15in]{geometry}
\usepackage{tikz}
\usepackage{booktabs,microtype}
\usepackage{placeins,needspace}
\usepackage[dvipsnames]{xcolor}
\usepackage[colorlinks=true,linkcolor=MidnightBlue,citecolor=MidnightBlue,urlcolor=MidnightBlue]{hyperref}
\theoremstyle{plain}
\usepackage{enumitem}
\newtheorem{theorem}{Theorem}[section]
\newtheorem{theoremalpha}{Theorem}

\newtheorem{proposition}[theorem]{Proposition}
\newtheorem{lemma}[theorem]{Lemma}
\newtheorem{corollary}[theorem]{Corollary}
\theoremstyle{definition}
\newtheorem{definition}[theorem]{Definition}
\newtheorem{example}[theorem]{Example}
\theoremstyle{remark}
\newtheorem{remark}[theorem]{Remark}
\usetikzlibrary{arrows.meta}
\definecolor{cutblue}{RGB}{31,84,140}
\definecolor{cutorange}{RGB}{201,110,26}
\definecolor{cutteal}{RGB}{17,122,110}

\DeclareMathOperator{\rank}{rank}
\DeclareMathOperator{\spn}{span}
\DeclareMathOperator{\Diag}{Diag}
\DeclareMathOperator{\diag}{diag}

\DeclareMathOperator{\range}{range}
\DeclareMathOperator{\cut}{cut}
\DeclareMathOperator{\SDP}{SDP}
\DeclareMathOperator{\MC}{MC}
\DeclareMathOperator{\conv}{conv}
\newcommand{\Sn}{\mathbb S^n}
\newcommand{\Ell}{\mathcal E_n}

\newcommand{\sgn}{\{-1,1\}}
\newcommand{\one}{\mathbf1}
\title[Geometry of Optimal Max-Cut SDP Solutions]{The Geometry of Optimal Max-Cut SDP Solutions}
\author{Avinash Bhardwaj}
\address{Department of Industrial Engineering and Operations Research, Indian Institute of Technology Bombay, Mumbai 400076, India}
\email{abhardwaj@iitb.ac.in}
\author{Chen Chen}
\address{Department of Integrated Systems Engineering, The Ohio State University, 1971 Neil Avenue, Columbus, OH 43210, USA}
\email{chen.8018@osu.edu}
\author{Vishnu Narayanan}
\address{Department of Industrial Engineering and Operations Research, Indian Institute of Technology Bombay, Mumbai 400076, India}
\email{vishnu@iitb.ac.in}
\subjclass[2020]{Primary 90C22, 05C50; Secondary 05B20, 90C27}
\keywords{Maximum cut, semidefinite relaxation, optimal cuts, doubly nonnegative matrices, completely positive matrices, Hadamard matrices}
\hypersetup{pdftitle={The Geometry of Optimal Max-Cut SDP Solutions},pdfauthor={Avinash Bhardwaj, Chen Chen, Vishnu Narayanan}}
\date{\today}
\begin{document}
\begin{abstract}
When the semidefinite relaxation of Max-Cut is exact, its optimal cut sign vectors lie in the kernel of the optimal dual slack. We study when they span this kernel and which matrices can occur as certificates with this property. Our main result realizes every doubly nonnegative matrix with positive diagonal and rational kernel, after sufficiently large even replication, as such a certificate. Replication preserves rank, complete positivity, and cp-rank when finite. In particular, connected exact instances with non-completely-positive sign-spanned slacks exist at every rank at least three, including an explicit fourteen-vertex example. We also characterize the arithmetic obstruction: some positive column replication has a sign-spanned kernel if and only if the original kernel is rational. For arbitrary real kernels, we determine the eventual spanning deficit. A multiplicity formula separates the contributions of repeated-column blocks and their feasible sums. Further results distinguish cut spans from optimal-face geometry, characterize orthogonal extremality through uniform complete graphs and Hadamard matrices, and give determinant bounds supporting exact finite classifications.
\end{abstract}
\maketitle

\section{Introduction}

The Maximum Cut problem asks for a partition of the vertices of an edge-weighted graph that maximizes the total weight of the edges crossing the partition. For a symmetric, nonnegative weight matrix $W$ with zero diagonal, its sign formulation and standard semidefinite programming relaxation are
\[
\operatorname{MC}(W)
=
\max_{x\in\sgn^n}
\frac14\sum_{i\ne j}W_{ij}(1-x_ix_j)
\]
and
\[
\operatorname{SDP}(W)
=
\max_{\substack{X\succeq0\\X_{ii}=1}}
\frac14\sum_{i\ne j}W_{ij}(1-X_{ij}),
\]
respectively. Every cut gives a feasible rank-one matrix $xx^\top$, so $\operatorname{MC}(W)\le\operatorname{SDP}(W)$. This relaxation underlies the approximation algorithm of Goemans and Williamson \cite{goemans1995improved}. When equality holds, the relaxation is called exact.

Exactness and its recognition have been studied through spectral bounds, optimality certificates, and structural properties of graphs \cite{DP93_maxcut,DP93_complexity,HLW21,MW2X,Bhardwaj2X,BhardwajHardness}. Exactness guarantees a rank-one optimum, but the remaining optimal solutions can have substantially different geometry. In particular, an exact instance may have higher-rank optima that are not convex combinations of optimal cut matrices \cite{Bhardwaj2X}. This leads to the question of how much of the continuous optimal structure is determined by the optimal cuts.

The dual problem provides a way to examine this question. Writing $\delta_i=\sum_jW_{ij}$ and $L_W=\Diag(\delta)-W$ for the weighted Laplacian, it can be expressed as
\[
\operatorname{D}(W)
=
\min_{s\in\mathbb R^n}
\left\{
\one^\top s:
Z(s):=\Diag(s)-\tfrac14L_W\succeq0
\right\},
\]
where $\Diag(s)$ is the diagonal matrix with diagonal $s$, and $Z(s)$ is the dual slack. Strong duality holds, and a primal feasible matrix $X$ and a dual feasible vector $s$ are both optimal precisely when $Z(s)X=0$. Thus, for a fixed optimal dual slack $Z$, a primal feasible matrix $X$ is optimal precisely when its range lies in $\ker Z$. In particular, the optimal cut sign vectors are exactly the sign vectors in $\ker Z$. We ask when they span this entire kernel and which matrices can occur as certificates with that property. This is a realization question: does requiring the discrete optimal cuts to generate the whole dual kernel substantially restrict the certificate? Our principal result shows that, after replication, every doubly nonnegative matrix with positive diagonal and rational kernel can occur. Let $J_m$ denote the $m\times m$ all-ones matrix.

\begin{theoremalpha}[\textbf{Realization; Theorem~\ref{thm:realization}}]
Let $Z$ be a doubly nonnegative matrix with positive diagonal and rational kernel. For every sufficiently large integer $k$, the matrix $Z\otimes J_{2k}$ is the optimal dual slack of an exact nonnegatively weighted Max-Cut instance whose optimal cut sign vectors span its entire kernel. Furthermore, replication preserves rank, complete positivity, and cp-rank when finite.
\end{theoremalpha}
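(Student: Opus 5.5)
The plan is to reduce the statement to a purely combinatorial claim about sign vectors in $\ker(Z\otimes J_{2k})$. The Max-Cut structure is then read off almost for free. Write $N=2kn$ and $Z'=Z\otimes J_{2k}$, indexed by pairs $(i,a)$ with $i\in[n]$ and $a\in[2k]$.

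\emph{Step 1: $Z'$ is a dual slack.} Observe that for any symmetric $W\ge0$ with zero diagonal,
\[
Z(s)=\Diag\bigl(s-\tfrac14\delta\bigr)+\tfrac14W .
\]
Hence a positive semidefinite matrix $M$ with nonnegative off-diagonal entries equals $Z(s)$ for the instance $W=4(M-\Diag(\diag M))$ and $s=\diag M+\tfrac14\delta$. The matrix $Z'$ is doubly nonnegative because $Z$ is and $J_{2k}\succeq0$ is entrywise positive. Its off-diagonal entries are entries of $Z$, including the diagonal values $Z_{ii}>0$ repeated inside each block. So $Z'$ is the dual slack of a nonnegatively weighted instance $W$. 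I expect that positivity of the diagonal is what makes the within-block weights $4Z_{ii}$ positive, so the instance is genuinely built on the replicated vertices.

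\emph{Step 2: optimality and exactness.} By complementary slackness as recalled in the introduction, it suffices to exhibit a sign vector $x\in\ker Z'$. Then $xx^\top$ is primal feasible with $Z'xx^\top=0$, so $s$ is dual optimal, $xx^\top$ is primal optimal, and $\MC(W)=\SDP(W)$. In addition, $Z'$ is an optimal dual slack, and the optimal cuts are exactly the sign vectors in $\ker Z'$.

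\emph{Step 3: spanning, the main step.} First describe the kernel: $y\in\ker Z'$ if and only if the vector of block sums $\sigma(y)=(\one_{2k}^\top y^{(i)})_i$ lies in $\ker Z$. Thus $\dim\ker Z'=(2k-1)n+\dim\ker Z$. For a sign vector, each block sum is an even integer in $[-2k,2k]$, and every such value is attainable. Since $\ker Z$ is rational, choose a basis $v^1,\dots,v^d$ of $\ker Z$ consisting of vectors in $2\mathbb Z^n$, and take $k\ge\max_{j,i}|v^j_i|/2$. Then:
\begin{itemize}
\item For each $j$ there is a sign vector $x^j$ with $\sigma(x^j)=v^j$, and $x^j\in\ker Z'$.
\item The sign vector $x^0$ with exactly $k$ entries equal to $+1$ in each block lies in $\ker Z'$.
\item Swapping a $+1$ and a $-1$ of $x^0$ inside block $i$ gives another kernel sign vector. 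The difference is $\pm2(e_{(i,a)}-e_{(i,b)})$, and varying the pair yields all of $e_{(i,a)}-e_{(i,b)}$.
\end{itemize}
These differences span the within-block zero-sum space, of dimension $(2k-1)n$. Adding the $x^j$, whose block sums span $\ker Z$, spans all of $\ker Z'$. The only delicate point is making sure enough sign vectors exist with the prescribed block sums. This is exactly where rationality and largeness of $k$ enter, and I expect it to be the main obstacle. The evenness of the replication $2k$ is what allows block sum $0$ and the swap construction.

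\emph{Step 4: preserved invariants.} We have $\rank(Z\otimes J_{2k})=\rank Z\cdot\rank J_{2k}=\rank Z$. If $Z=BB^\top$ with $B\ge0$ having $r$ columns, then $Z'=(B\otimes\one_{2k})(B\otimes\one_{2k})^\top$. Hence $Z'$ is completely positive with cp-rank at most $r$. Conversely, $Z$ is a principal submatrix of $Z'$, obtained by taking one index per block. Principal submatrices of completely positive matrices are completely positive with no larger cp-rank. So complete positivity and cp-rank are preserved in both directions.
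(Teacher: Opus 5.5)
Your proposal is correct and follows essentially the paper's argument. The paper factors $Z=B^\top B$ and applies Corollary~\ref{cor:construction} and Theorem~\ref{thm:replication} to $B^{(2k\one)}$; the proof of that theorem is your block-sum map, within-block swaps of balanced sign vectors, and integer kernel vectors fitting in the box $|z_t|\le k$, and the rank and complete-positivity steps also match yours. Working with $\ker(Z\otimes J_{2k})$ directly rather than through a Gram factor is only cosmetic, and to justify the definite article in ``the optimal dual slack'' you need only cite the uniqueness in Proposition~\ref{prop:certificate}.
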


The realization contains the prescribed matrix $Z$ as a principal submatrix and preserves its rank and nonnegative factorization properties. Thus full spanning can be imposed without sacrificing these features of the original matrix. One consequence concerns complete positivity. At ranks one and two, every doubly nonnegative matrix admits a nonnegative Gram factor of minimum dimension, independently of any spanning condition. At higher ranks, even full spanning does not force complete positivity: Corollary~\ref{cor:nonormalform} constructs connected instances with non-completely-positive slacks at every dual rank at least three. Example~\ref{ex:horn14} gives an explicit instance on fourteen vertices.

The rationality hypothesis has a precise role. Replication creates additional coordinates on which signs can vary, but the sums of those signs are integers. Whether these sums can generate the original kernel is therefore an arithmetic question. Our second result characterizes exactly when replication can produce full spanning.

\begin{theoremalpha}[\textbf{Replication criterion; Theorem~\ref{thm:replication} and Corollary~\ref{cor:rational}}]
Let $B$ be a real matrix. The following are equivalent:
\begin{enumerate}
\item Some matrix obtained by repeating each column of $B$ a positive integer number of times has its kernel spanned by sign vectors.
\item The kernel of $B$ is rational.
\item For every sufficiently large integer $k$, repeating each column of $B$ exactly $2k$ times produces a matrix whose kernel is spanned by sign vectors.
\end{enumerate}
\end{theoremalpha}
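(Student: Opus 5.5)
The proof works throughout with the block-sum map. Let $B$ have columns $b_1,\dots,b_n$, and let $B_r$ be obtained by repeating column $j$ exactly $r_j\ge1$ times, so $B_r$ has $N=\sum_j r_j$ columns grouped into blocks $I_1,\dots,I_n$. Define $\sigma:\mathbb R^N\to\mathbb R^n$ by $\sigma(y)_j=\sum_{a\in I_j}y_a$. Then $B_ry=B\sigma(y)$, so
\[
\ker B_r=\sigma^{-1}(\ker B)=\ker\sigma\;+\;\{\text{lifts of }\ker B\}.
\]
Here a lift of $x\in\ker B$ places $x_j$ on one chosen coordinate of $I_j$ and zeros elsewhere. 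In particular $\sigma(\ker B_r)=\ker B$, and $\ker\sigma$ consists of the vectors whose sum over every block is zero. Everything below is arranged around this decomposition.

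For (1)$\Rightarrow$(2), suppose the sign vectors in $\ker B_r$ span $\ker B_r$. Applying the linear surjection $\sigma|_{\ker B_r}\colon\ker B_r\to\ker B$ shows that $\ker B$ is spanned by the vectors $\sigma(y)$ with $y\in\ker B_r\cap\sgn^N$. Each such $\sigma(y)$ is an integer vector, being a sum of $\pm1$'s. Hence $\ker B$ has a spanning set of integer vectors, which means $\ker B$ is rational. The implication (3)$\Rightarrow$(1) is immediate.

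For (2)$\Rightarrow$(3), fix an integer basis $v^1,\dots,v^d$ of $\ker B$, which exists by rationality, and let $M=\max_{i,j}|v^i_j|$. Take $k\ge\max(M,1)$ and $r_j=2k$ for all $j$. I would show that the span $S$ of the sign vectors in $\ker B_r$ contains both summands of the decomposition above.

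First, $S\supseteq\ker\sigma$. Since $2k$ is even, balanced sign vectors exist, namely those with exactly $k$ entries $+1$ and $k$ entries $-1$ in every block. These lie in $\ker B_r$ because their image under $\sigma$ is $0$. Choose a balanced sign vector with $+1$ at position $a$ and $-1$ at position $b$, where $a,b$ lie in the same block. Swapping these two entries gives another balanced sign vector, and the difference of the two is $2(e_a-e_b)$. Vectors of the form $e_a-e_b$ with $a,b$ in a common block span $\ker\sigma$.

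Second, $S$ contains a preimage of each $v^i$. The integer $2v^i_j$ is even and satisfies $|2v^i_j|\le2M\le2k$, so it is the sum of $2k$ signs: use $k+v^i_j$ entries equal to $+1$ and $k-v^i_j$ entries equal to $-1$. Filling each block in this way gives a sign vector $y^i$ with $\sigma(y^i)=2v^i$, and therefore $y^i\in\ker B_r$.

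Now take any $y\in\ker B_r$ and write $\sigma(y)=\sum_i c_iv^i$. Then $y-\sum_i\tfrac{c_i}{2}y^i$ lies in $\ker\sigma\subseteq S$, so $y\in S$. The case $d=0$ is covered by the first step alone.

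The main obstacle is the parity issue. Any sum of $2k$ signs is even, so the basis vectors $v^i$ themselves may not be realizable as block sums. The fix is to realize $2v^i$ instead, and this is exactly why the statement uses even replication and a threshold on $k$ depending on $\max|v^i_j|$.
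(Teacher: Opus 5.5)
Your proof is correct and takes essentially the same route as the paper. You push sign-spanning of a replicated kernel down through the surjective block-sum map to get an integer spanning set of $\ker B$. Conversely, you use balanced-swap differences to fill the kernel of the block-sum map, and realize each integer basis vector $v^i$ with $\|v^i\|_\infty\le k$ through a kernel sign vector with block sums $2v^i$.

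The only difference is organizational. The paper first proves the exact dimension count $\sum_t(2k_t-1)+\dim L_k(B)$ for the span of kernel sign vectors (Theorem~\ref{thm:replication}) and then reads off spanning by rank--nullity. You verify the spanning directly.
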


For arbitrary real kernels, we also determine the eventual spanning deficit: it equals the difference between the dimension of the original kernel and the dimension spanned by its integer vectors (Corollary~\ref{cor:deficit}). A maximal-minor bound gives a quantitative replication threshold in the rational case. In Theorem~A, exactness already holds for every positive even replication; rationality is what ensures that the optimal cuts eventually span the entire dual kernel.

The main tool is the multiplicity formula of Theorem~\ref{thm:multiplicity}. If several columns of a matrix are identical, their contribution to a kernel equation depends only on the sum of the corresponding coordinates. For sign vectors, this separates two sources of variation: rearranging signs within repeated-column blocks and changing the feasible block sums. The formula computes the dimension contributed by each and gives a criterion for full-kernel spanning. It is useful precisely where repetition supplies structure; with distinct columns, the remaining calculation is the original span problem.

This calculation has connections with equality-knapsack polytopes and symmetry-based dimension arguments. Lee~\cite{Lee97} emphasizes that the dimension of an equality-knapsack polytope can take every value in $\{-1,0,\ldots,n-1\}$, with $\dim\varnothing=-1$. The block-permutation identity in Proposition~\ref{prop:binarydimension} recovers Lee's full-dimensionality result for a special class, as explained in Remark~\ref{rem:knapsack} \cite[Theorem~2.1(a)]{Lee97}; related adjacency questions are studied in \cite{Matsui94}. Laurent and Poljak \cite[Section~2 and Theorem~4.4]{LP96Gap} group equal coefficients into root patterns when studying facets of gap inequalities. At gap zero, their pattern equation agrees with our scalar count equation, although their criterion concerns cut incidence vectors in edge space rather than the span of vertex sign vectors. Our decomposition uses established ideas of permutation symmetry, orbit averaging, and invariant subspaces \cite{Margot10,HerrRehnSchurmann13,Bulutoglu23}, making their dimension contributions explicit in the form needed for Theorems~A and~B.

Replication also connects with graph operations that preserve exactness. The vertex splitting of \cite[Theorem~9 and Lemma~10]{Bhardwaj2X} lifts primal optima without changing their rank. Our operation acts on a dual factor, creates cliques within replication blocks, and preserves dual rank while controlling the span of kernel sign vectors. Section~\ref{sec:replication} gives the detailed comparison.

The remaining results examine the geometric and algebraic consequences of the spanning question. The facial framework comes from the established theory of the elliptope and spectrahedra \cite{LP95,LP96,RamanaGoldman95}. We specialize the Li--Tam face-dimension formula and results on simplicial cut faces to optimal cut bases \cite[Section~31.5]{de1997geometry}\cite{Tropp18,KuengTropp21}, treating strict complementarity separately from spanning by optimal cuts. The certificate used throughout is the usual complementary-slackness certificate, whose signed-Laplacian form also appears in \cite{HLW21}. Although dual uniqueness holds without exactness \cite[Theorem~2]{DP93_maxcut}\cite[Corollary~2]{deCarliSilvaTuncel19}, we include its short exact-case proof because it identifies the local cut profile.

For rank-one objectives, Laurent and Poljak characterized exactness \cite[Theorem~3.3]{LP95}; our application additionally determines the span of optimal signs. We also treat rank-two Gram normal forms and distinguish failure of complete positivity from the separate obstruction of cp-rank exceeding matrix rank. Finally, the extremal configuration of $n-1$ orthogonal optimal sign vectors is characterized through uniform complete graphs and Hadamard matrices. Without orthogonality, nonuniform product weights can attain the same cut-span dimension. A maximal-sign-determinant bound gives sharp coordinate bounds through order eight wherever admissible positive directions exist, and exact computations classify all such directions and the stated small unweighted graphs.

\subsection{Notation and Organisation}
Denote by $\Sn$ the space of real symmetric $n\times n$ matrices, with inner product $\langle A,B\rangle=\operatorname{tr}(AB)$. The notation $X\succeq0$ means that $X$ is positive semidefinite. The operator $\diag$ extracts the diagonal of a matrix, while $\Diag$ forms a diagonal matrix from a vector. We write $\one$ for the all-ones vector, $I$ for the identity matrix, and $J_m$ for the all-ones matrix of order $m$, with other dimensions understood from context. The symbols $\otimes$ and $\oplus$ denote the Kronecker product and direct sum, respectively.

For a set $S$, we write $\spn S$ for its real linear span and $\conv S$ for its convex hull, with $\spn\varnothing=\{0\}$. Dimensions of convex sets are affine dimensions. We use $\range M$ to denote the column space of a matrix $M$, $u\circ v$ for the entrywise product of vectors of the same dimension, and $\|u\|_\infty=\max_i|u_i|$. A subspace is \emph{rational} if it has a basis of rational vectors, equivalently a basis of integer vectors; this includes the zero subspace.

Unless stated otherwise, $W$ is a symmetric nonnegative weight matrix with zero diagonal, and its support graph has $n\ge2$ vertices and no isolated vertices. For signed weights, an edge is defined by $W_{ij}\ne0$. We define
\[
A_W=W,\qquad
\delta_i=\sum_jW_{ij},\qquad
D_W=\Diag(\delta),\qquad
L_W=D_W-A_W,
\]
where $\delta=(\delta_1,\ldots,\delta_n)^\top$ is the weighted degree vector. For a simple graph $G$, we use $A(G)$ for its adjacency matrix, $E(G)$ for its edge set, $\alpha(G)$ for its independence number, and $\deg(i)$ for the unweighted degree of vertex $i$.

For a sign vector $x\in\sgn^n$, let
\[
\cut_W(x)=\sum_{\substack{i<j\\x_i\ne x_j}}W_{ij},
\qquad
W_i(x)=\sum_{\substack{j\\x_i\ne x_j}}W_{ij},
\qquad
W(x)=(W_i(x))_{i=1}^n.
\]
Thus $W_i(x)$ is the weight of cut edges incident to vertex $i$. The vectors $x$ and $-x$ represent the same cut partition; both are retained when considering spans of sign vectors. We write $D_x=\Diag(x)$ and $W^x=D_xWD_x$ for switching by $x$.

The elliptope is
\[
\Ell=\{X\in\Sn:X\succeq0,\ \diag X=\one\}.
\]
For a nonzero vector $v\in\mathbb R^n$, we define $F_v=\{X\in\Ell:Xv=0\}$.

\begin{definition}\label{def:invariants}
For an exact instance, let $Z^*$ be its unique optimal dual slack and define
\[
\begin{gathered}
\mathcal C(W)=\{x\in\sgn^n:\cut_W(x)=\MC(W)\},\\
c(W)=\dim\spn\mathcal C(W),
\qquad
d(W)=\rank Z^*.
\end{gathered}
\]
The \emph{orthogonality number} $o(W)$ is the maximum size of a pairwise orthogonal subset of $\mathcal C(W)$, and $F^*(W)$ denotes the optimal face of the Max-Cut SDP.
\end{definition}

Complementary slackness gives $c(W)+d(W)\le n$. We call an exact instance \emph{sign-spanned} when equality holds. Cut-span dimension $c(W)$, dual nullity $n-d(W)$, and affine optimal-face dimension $\dim F^*(W)$ measure different features of the instance; Table~\ref{tab:invariants} illustrates their distinctions. The \emph{cut hull} is $\conv\{xx^\top:x\in\mathcal C(W)\}$. An optimal primal--dual pair $(X,Z)$ is \emph{strictly complementary} if $\rank X+\rank Z=n$. The uniqueness of the dual slack for exact instances is recalled in Proposition~\ref{prop:certificate}.

A matrix is \emph{doubly nonnegative} if it is positive semidefinite and entrywise nonnegative. It is \emph{completely positive} if it admits a factorization $N^\top N$ with $N\ge0$; its \emph{cp-rank} is the smallest number of rows in such a factor. A symmetric matrix $H$ is \emph{copositive} if $u^\top Hu\ge0$ for every entrywise nonnegative vector $u$.

For a matrix $Y$ with $n$ columns, we define
\[
\sigma(Y)=\dim\spn(\ker Y\cap\sgn^n).
\]
For a matrix $B$ with $r$ columns and $m\in\mathbb Z_{>0}^r$, let $B^{(m)}$ denote the matrix obtained by repeating its $t$th column $m_t$ times. For $k\in\mathbb Z_{>0}^r$, define
\[
\begin{aligned}
L_k(B)
&=\spn\{z\in\ker B\cap\mathbb Z^r:
|z_t|\le k_t\text{ for every }t\},\\
\ell_{\mathbb Z}(B)
&=\dim\spn(\ker B\cap\mathbb Z^r).
\end{aligned}
\]
When $\ker B$ is rational, its \emph{uniform replication threshold} is
\[
\kappa(B)=\min\{k\in\mathbb Z_{>0}:L_{k\one}(B)=\ker B\}.
\]
Corollary~\ref{cor:rational} establishes that this minimum exists.

A \emph{Hadamard matrix} of order $n$ is a matrix $H\in\{-1,1\}^{n\times n}$ satisfying $HH^\top=nI$. We use $h(n)$ to denote the largest size of a pairwise orthogonal subset of $\sgn^n$, and $\kappa_\perp(n)$ for the maximum of $o(W)$ over nonnegatively weighted exact instances on $n$ vertices without isolated vertices. An integer vector is \emph{primitive} if its coordinates have greatest common divisor one. Finally, we define
\[
D_{01}(j)=\max\{|\det N|:N\in\{0,1\}^{j\times j}\},
\qquad D_{01}(0)=1.
\]

The rest of the manuscript is organised as follows: Section~\ref{sec:prelim} introduces the certificates and the three dimensions that guide the paper. Section~\ref{sec:dimension} develops the main argument, from the multiplicity formula through replication to realization. We then examine optimal-face geometry and cut hulls in Section~\ref{sec:faces}, low-rank factors and obstructions to nonnegative factors of minimum dimension in Section~\ref{sec:lowrank}, and orthogonality and bounded weight directions in Section~\ref{sec:orthogonal}. Section~\ref{sec:computations} presents the exact finite classifications of admissible directions and small unweighted graphs, and Section~\ref{sec:discussion} gives further questions. Appendix~\ref{sec:supportspectral} gives a sharp support-density consequence of the cut-span bound.

\section{Certificates and the three dimensions}
\label{sec:prelim}
We recall the standard optimality certificates for the Max-Cut SDP and their consequences for optimal cuts and the optimal face \cite{DP93_maxcut,HLW21,LP95,LP96}. These results provide the framework for the replication and realization theorems developed in the next section. We include proofs to fix the normalization, identify the local cut profile determined by the dual solution, and distinguish the dimension invariants used throughout the paper. The detailed face analysis is deferred to Section~\ref{sec:faces}.

Throughout this section, $W$ is symmetric with zero diagonal and nonnegative entries. Its edges are the pairs with $W_{ij}>0$. We assume $n\ge2$ and that there are no isolated vertices. In the notation introduced above, the primal and dual programs are
\begin{equation}\label{eq:sdp}
\SDP(W)=\max\left\{\tfrac14\langle L_W,X\rangle:X\in\Ell\right\}
\end{equation}
and
\begin{equation}\label{eq:dual}
\operatorname{D}(W)
=
\min_{s\in\mathbb R^n}
\left\{
\one^\top s:
Z =\Diag(s)-\tfrac14L_W\succeq0
\right\}.
\end{equation}
The primal feasible region is compact and contains the strictly feasible matrix $I$; the dual is strictly feasible when all coordinates of $s$ are sufficiently large. Both optima are attained and their values agree.

Suppose that a cut $x$ attains the SDP optimum. Complementary slackness requires $x$ to lie in the kernel of the dual slack. Since every coordinate of $x$ is nonzero, this equation determines each diagonal dual variable separately. The resulting certificate depends only on the weights of cut edges incident to each vertex. The following standard characterization records this dependence in our normalization; compare \cite{DP93_maxcut,HLW21}. The uniqueness assertion below is the exact-instance specialization of the general dual uniqueness theorem \cite[Theorem~2]{DP93_maxcut}\cite[Corollary~2]{deCarliSilvaTuncel19}.

\begin{proposition}[Rank-one certificate]\label{prop:certificate}
For a sign vector $x$, define
\[
Z_x=\Diag\bigl(\tfrac12W(x)\bigr)-\tfrac14L_W.
\]
Then $Z_xx=0$. Moreover, $Z_x\succeq0$ if and only if the relaxation is exact and $x$ is a maximum cut.

In an exact instance, the dual optimum is unique, and for every maximum cut $x$,
\[
s_i^*=\tfrac12W_i(x).
\]
Consequently, all maximum cuts have the same local cut profile. Under the standing assumptions of nonnegative weights and no isolated vertices,
\[
W_i(x)>\frac{\delta_i}{2}
\]
at every vertex.
\end{proposition}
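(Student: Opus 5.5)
The plan is to verify $Z_xx=0$ by direct computation, then obtain the exactness characterization from weak duality together with the identity $\one^\top s=\cut_W(x)$, and finally derive uniqueness and the degree inequality from complementary slackness.

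\emph{Kernel identity.} Fix a sign vector $x$ and compute $(L_Wx)_i=\delta_ix_i-\sum_jW_{ij}x_j$. Terms with $x_j=x_i$ cancel against the corresponding part of $\delta_ix_i$, and terms with $x_j=-x_i$ contribute twice. Hence $(L_Wx)_i=2W_i(x)x_i$, that is, $L_Wx=2\Diag(W(x))x$. It follows that $Z_xx=\tfrac12\Diag(W(x))x-\tfrac14L_Wx=0$.

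\emph{Objective value.} The same bookkeeping gives $\one^\top\tfrac12W(x)=\tfrac12\sum_iW_i(x)=\cut_W(x)$, since each cut edge is counted once from each endpoint.

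\emph{Exactness characterization.} If $Z_x\succeq0$, then $s=\tfrac12W(x)$ is dual feasible, and weak duality gives
\[
\SDP(W)\le\one^\top s=\cut_W(x)\le\MC(W)\le\SDP(W).
\]
All of these are equalities, so the relaxation is exact and $x$ is a maximum cut. Conversely, suppose the relaxation is exact and $x$ is a maximum cut. Then $xx^\top$ is primal optimal. Take any dual optimal $s^*$; by the strong duality stated above it exists. Complementary slackness gives $Z(s^*)x=0$. Reading off coordinate $i$, using $x_i\neq0$ and the identity $(L_Wx)_i=2W_i(x)x_i$, we get $s^*_ix_i=\tfrac12W_i(x)x_i$, hence $s^*_i=\tfrac12W_i(x)$. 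Therefore $Z(s^*)=Z_x\succeq0$.

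\emph{Uniqueness and common profile.} The argument above shows that every dual optimum equals $\tfrac12W(x)$ for any fixed maximum cut $x$, so the dual optimum is unique. Since this holds for every maximum cut, all maximum cuts share the same vector $W(x)$.

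\emph{Degree inequality.} Here I use the positive semidefiniteness of $Z_x$.
\begin{itemize}
\item Its diagonal entries are nonnegative: $\tfrac12W_i(x)-\tfrac14\delta_i\ge0$, so $W_i(x)\ge\delta_i/2$.
\item To make this strict, suppose $(Z_x)_{ii}=0$. For a PSD matrix this forces row $i$ to vanish.
\item The off-diagonal entries of row $i$ are $\tfrac14W_{ij}$. Since vertex $i$ is not isolated, some $W_{ij}>0$, which is a contradiction.
\end{itemize}
Hence $W_i(x)>\delta_i/2$.

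The only delicate step is the converse direction of the exactness characterization. It relies on existence of a dual optimum and on complementary slackness pinning down each $s_i$ because $x_i\ne0$; both are already recorded in the text preceding the proposition.
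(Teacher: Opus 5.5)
Your proof is correct and follows essentially the same route as the paper. Both use the coordinate identity $(L_Wx)_i=2x_iW_i(x)$, then complementary slackness with $x_i\ne0$ to pin down each $s^*_i$, which gives uniqueness and the common local profile. Both finish with the zero-diagonal-forces-zero-row argument for the strict inequality $W_i(x)>\delta_i/2$. The only cosmetic difference is in the forward direction: you write out the weak-duality chain using $\one^\top\tfrac12W(x)=\cut_W(x)$, whereas the paper just notes that $s=W(x)/2$ is dual feasible and complementary to $xx^\top$. That is the same zero-gap fact.
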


\begin{proof}
For each vertex $i$,
\[
(L_Wx)_i
=
\sum_jW_{ij}(x_i-x_j)
=
2x_iW_i(x).
\]
This proves $Z_xx=0$. If $Z_x\succeq0$, then $s=W(x)/2$ is dual feasible and complementary to the primal feasible matrix $xx^\top$. Both are therefore optimal, so the relaxation is exact and $x$ is a maximum cut.

Conversely, suppose that the instance is exact and $x$ is a maximum cut. For any optimal dual slack $Z^*$, complementary slackness gives $Z^*x=0$. Its $i$th coordinate is
\[
s_i^*x_i=\tfrac14(L_Wx)_i=\tfrac12x_iW_i(x),
\]
hence $s_i^*=W_i(x)/2$. Thus every optimal dual solution equals $W(x)/2$, proving uniqueness and the stated formula. Applying this formula to any other maximum cut shows that its local cut profile is the same.

Finally, a positive semidefinite matrix with a zero diagonal entry has a zero corresponding row. For $i\ne j$, the slack satisfies $Z^*_{ij}=W_{ij}/4$. Since every vertex has a neighbor, no row of $Z^*$ is zero, and therefore $Z^*_{ii}>0$. Using
\[
Z^*_{ii}=\tfrac12W_i(x)-\tfrac14\delta_i
\]
gives the strict inequality.
\end{proof}

The local profile assertion is stronger than equality of total cut weights: each vertex has the same incident cut weight in every maximum cut of an exact instance. The strict inequality also strengthens the elementary condition obtained by flipping a single vertex of an arbitrary maximum cut, which gives only $W_i(x)\ge \delta_i/2$. These local conditions follow from positive semidefiniteness of the certificate; the certificate itself remains a global condition.

Switching by $x$ expresses that global condition in terms of a signed Laplacian. With the conventions fixed in the introduction,
\begin{equation}\label{eq:switch}
4D_xZ_xD_x=-L_{W^x}=L_{-W^x}.
\end{equation}
The weights in $-W^x$ are positive on cut edges and negative on uncut edges. Thus the certificate requires their combined signed Laplacian to be positive semidefinite. Since $D_x^{-1}=D_x$, switching preserves positive semidefiniteness, and \eqref{eq:switch} is equivalent to the certificate condition in Proposition~\ref{prop:certificate}.

Once one cut has certified exactness, the same dual slack describes every optimal cut and every optimal SDP matrix. The distinction is that a cut contributes a single sign vector to the kernel, whereas a general optimal matrix may have a higher-dimensional range within it. The next proposition makes both descriptions explicit and relates them to the dimension invariants from the introduction. It specializes the usual complementary-slackness description of elliptope faces \cite{LP95,LP96}.

\begin{proposition}\label{prop:face}
For an exact instance,
\begin{align}
\mathcal C(W)&=\ker Z^*\cap\sgn^n,\label{eq:cuts}\\
F^*(W)&=\{X\in\Ell:Z^*X=0\},\label{eq:face}\\
1\le o(W)&\le c(W)\le n-d(W)\le n-1.\label{eq:rankbound}
\end{align}
If the instance is sign-spanned, there is an optimal primal matrix of rank $n-d(W)$, and hence a strictly complementary optimal pair.
\end{proposition}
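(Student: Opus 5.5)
Everything here comes from complementary slackness together with the uniqueness of the dual slack $Z^*$ from Proposition~\ref{prop:certificate}. I will prove the cut description \eqref{eq:cuts} first, then the face description \eqref{eq:face}, then the chain \eqref{eq:rankbound}, and finally the strict-complementarity consequence.

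\textbf{Cuts.} For \eqref{eq:cuts}, take a maximum cut $x$. Exactness makes $xx^\top$ primal optimal, so complementary slackness gives $Z^*xx^\top=0$, and hence $Z^*x=0$. For the converse, take a sign vector $x$ with $Z^*x=0$. Then $xx^\top\in\Ell$ and $Z^*xx^\top=0$, so $xx^\top$ is optimal by the optimality criterion recalled in the introduction. Its value is $\cut_W(x)$, which therefore equals $\SDP(W)=\MC(W)$.

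\textbf{Face.} For \eqref{eq:face}, I use the same criterion. A feasible $X$ is optimal if and only if $Z^*X=0$, because $Z^*$ is an optimal dual slack. This is precisely the characterization stated after the dual program in the introduction, applied with the unique slack $Z^*$.

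\textbf{Rank bounds.} In \eqref{eq:rankbound}, the inequality $o(W)\ge1$ holds because exactness supplies a maximum cut. Next, $o(W)\le c(W)$ because pairwise orthogonal nonzero vectors are linearly independent and lie in $\spn\mathcal C(W)$. By \eqref{eq:cuts}, $\spn\mathcal C(W)\subseteq\ker Z^*$, so $c(W)\le\dim\ker Z^*=n-d(W)$. For the last inequality I need $Z^*\ne0$. By Proposition~\ref{prop:certificate}, $Z^*_{ii}>0$ under the standing assumptions, so $d(W)\ge1$.

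\textbf{Strict complementarity.} Assume the instance is sign-spanned, that is, $c(W)=n-d(W)$. Then $\spn\mathcal C(W)=\ker Z^*$. Set
\[
X=\frac1{|\mathcal C(W)|}\sum_{x\in\mathcal C(W)}xx^\top .
\]
This matrix is PSD, has unit diagonal, and satisfies $Z^*X=0$, so it is optimal by \eqref{eq:face}. Its range is $\spn\mathcal C(W)$, because the range of a sum of PSD matrices is the sum of their ranges. Hence $\rank X=n-d(W)$, and $\rank X+\rank Z^*=n$, so $(X,Z^*)$ is a strictly complementary optimal pair.

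The main obstacle is this range identity for sums of PSD matrices. It follows from the fact that $v^\top Xv=0$ forces $x^\top v=0$ for every $x\in\mathcal C(W)$, so $\ker X$ is the orthogonal complement of $\spn\mathcal C(W)$. The remaining steps are direct bookkeeping.
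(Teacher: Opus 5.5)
Your proof is correct and follows the paper's argument essentially step for step. The paper derives the face description from the duality-gap identity $\one^\top s^*-\tfrac14\langle L_W,X\rangle=\langle Z^*,X\rangle$, which is the complementary-slackness criterion you cite, and then specializes to $X=xx^\top$ for the cuts and averages cut matrices to get the strictly complementary optimum. The only cosmetic differences are that the paper averages over a basis of $\ker Z^*$ chosen from the optimal cuts rather than over all of $\mathcal C(W)$, and deduces $Z^*\neq0$ from a nonzero edge weight rather than from $Z^*_{ii}>0$.
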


\begin{proof}
For every primal feasible $X$, the duality gap against an optimal dual solution is
\[
\one^\top s^*-\tfrac14\langle L_W,X\rangle
=
\langle Z^*,X\rangle.
\]
Since $Z^*$ and $X$ are positive semidefinite, this gap vanishes precisely when $Z^*X=0$. This proves \eqref{eq:face}; taking $X=xx^\top$ gives \eqref{eq:cuts}.

There is at least one optimal cut, and pairwise orthogonal sign vectors are linearly independent. All optimal cut sign vectors lie in $\ker Z^*$, giving
\[
1\le o(W)\le c(W)\le n-d(W).
\]
At least one edge weight is nonzero, so $Z^*\ne0$ and $d(W)\ge1$. This proves \eqref{eq:rankbound}.

Finally, suppose that the optimal cut sign vectors span $\ker Z^*$. Set $m=n-d(W)$ and choose a basis $x^{(1)},\ldots,x^{(m)}$ of the kernel from among these vectors. The matrix
\[
\overline X=\frac1m\sum_{j=1}^m x^{(j)}(x^{(j)})^\top
\]
is a convex combination of optimal cut matrices and is therefore optimal. Its range is the span of the chosen vectors, namely $\ker Z^*$. Consequently,
\[
\rank\overline X+\rank Z^*=m+d(W)=n,
\]
which proves strict complementarity.
\end{proof}

Strict complementarity does not conversely imply sign-spanning: an optimal matrix can fill the dual kernel even when optimal sign vectors do not span it. Neither condition determines the affine dimension of the optimal face, which concerns variations among matrices rather than vectors.

Table~\ref{tab:invariants} gives concrete examples of these distinctions. The instance with $v=(1,1,2,2)$ is strictly complementary but not sign-spanned. For $v=(1,1,1,3)$, the optimal face consists of a single matrix even though the dual kernel has dimension three; no optimal matrix has full rank on that kernel. The final row shows that full cut-span can also occur for nonuniform product weights. The face calculations and the strict-complementarity assertions are established in Section~\ref{sec:faces}.

\begin{table}[htbp]
\centering
\begin{tabular}{@{}lccc@{}}
\toprule
instance & $c(W)$ & $n-d(W)$ & $\dim F^*(W)$\\
\midrule
uniform $K_4$                         & 3 & 3 & 2\\
uniform $K_6$                         & 5 & 5 & 9\\
$W_{ij}=4v_iv_j$, $v=(1,1,2,2)$       & 2 & 3 & 2\\
$W_{ij}=4v_iv_j$, $v=(1,1,1,3)$       & 1 & 3 & 0\\
$W_{ij}=4v_iv_j$, $v=(1,1,1,1,1,3)$   & 5 & 5 & 9\\
\bottomrule\\
\end{tabular}
\caption{\small Cut-span dimension, dual nullity, and affine optimal-face dimension. Product weights are specified for $i\ne j$, with zero diagonal. Rows three and four separate cut-span dimension from dual nullity; the first two rows show that full cut-span does not force dual nullity to equal optimal-face dimension. The calculations are given in Section~\ref{sec:faces}.}
\label{tab:invariants}
\end{table}
\FloatBarrier

\section{The dimension formula, replication, and realization}
\label{sec:dimension}

The certificate description from Section~\ref{sec:prelim} turns the study of optimal cuts into a question about sign vectors in a linear subspace. Factoring the dual slack makes that question particularly concrete: if $Z=Y^\top Y$, then an optimal cut is a choice of signs for the columns of $Y$ whose signed sum is zero. We first determine how repeated columns contribute to the span of these sign vectors. We then introduce repetitions deliberately, obtaining the arithmetic replication criterion and the realization theorem.

The following reformulation of the certificate will be used in both directions. Starting from an exact instance, it produces a Gram factor whose kernel sign vectors are the optimal cuts. Starting from a suitable factor, it produces an exact Max-Cut instance.

\begin{proposition}\label{prop:dnn}
Let $W\ge0$ have no isolated vertices, and let $x^{(1)},\ldots,x^{(k)}\in\sgn^n$, where $k\ge1$. The relaxation is exact with each $x^{(j)}$ optimal if and only if there is a doubly nonnegative matrix $Z$ such that
\[
W_{ij}=4Z_{ij}\quad(i\ne j),
\qquad
Zx^{(j)}=0\quad(1\le j\le k).
\]
In that case $Z=Z^*$. If $Z=Y^\top Y$ and $y_i$ denotes the $i$th column of $Y$, then
\[
W_{ij}=4\langle y_i,y_j\rangle\quad(i\ne j),
\qquad
\langle y_i,y_j\rangle\ge0,
\qquad
Yx^{(j)}=\sum_i x_i^{(j)}y_i=0.
\]
The factor $Y$ may be chosen with $d(W)$ independent rows, and every column is nonzero.
\end{proposition}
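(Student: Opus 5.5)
Both directions of the equivalence reduce to Proposition~\ref{prop:certificate}. For the forward direction, assume the relaxation is exact and each $x^{(j)}$ is a maximum cut. Take $Z=Z^*$, the unique optimal dual slack. Then:
\begin{enumerate}[label=(\roman*)]
\item $Z$ is positive semidefinite by dual feasibility.
\item Its off-diagonal entries are $Z_{ij}=-\tfrac14(L_W)_{ij}=\tfrac14W_{ij}\ge0$, since $\Diag(s^*)$ does not touch off-diagonal positions.
\item Its diagonal is positive by the last part of Proposition~\ref{prop:certificate}.
\end{enumerate}
So $Z$ is doubly nonnegative. The kernel conditions $Zx^{(j)}=0$ follow from \eqref{eq:cuts} in Proposition~\ref{prop:face}.

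For the converse, let $Z$ be doubly nonnegative with $W_{ij}=4Z_{ij}$ for $i\ne j$ and $Zx^{(j)}=0$. Write $Z=\Diag(s)-\tfrac14L_W$, which defines $s$ uniquely because the off-diagonal parts already agree: $s_i=Z_{ii}+\tfrac14\delta_i$. Then $s$ is dual feasible since $Z\succeq0$. For each $j$, the pair $x^{(j)}(x^{(j)})^\top$ and $s$ satisfies $\langle Z,x^{(j)}(x^{(j)})^\top\rangle=0$, so the duality-gap identity from the proof of Proposition~\ref{prop:face} shows both are optimal. Hence the relaxation is exact and every $x^{(j)}$ is a maximum cut. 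The identification $Z=Z^*$ follows from uniqueness of the dual optimum in Proposition~\ref{prop:certificate}, since $s$ is an optimal dual vector. Alternatively, the equation $Zx^{(1)}=0$ read coordinatewise forces $s_i=\tfrac12W_i(x^{(1)})$, exactly as in that proof, so $Z=Z_{x^{(1)}}$.

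The factorization statements are bookkeeping:
\begin{itemize}
\item Since $Z\succeq0$ with $\rank Z=d(W)$, a spectral or Cholesky-type factorization gives $Z=Y^\top Y$ with $Y$ of size $d(W)\times n$ and full row rank.
\item Then $Z_{ij}=\langle y_i,y_j\rangle$, so $W_{ij}=4\langle y_i,y_j\rangle$ for $i\ne j$ and $\langle y_i,y_j\rangle=Z_{ij}\ge0$.
\item Since $\ker Y^\top Y=\ker Y$, we get $Yx^{(j)}=\sum_ix_i^{(j)}y_i=0$.
\item Each column is nonzero because $\|y_i\|^2=Z_{ii}>0$, again by the positive-diagonal conclusion of Proposition~\ref{prop:certificate}, which uses the no-isolated-vertex hypothesis.
\end{itemize}

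The only point requiring care is in the converse: the hypothesis supplies only some doubly nonnegative $Z$, and one must check that it really has the form $\Diag(s)-\tfrac14L_W$. This holds because a diagonal matrix can absorb any diagonal discrepancy, so the match of the off-diagonal entries is all that is needed. Everything else is a direct appeal to Propositions~\ref{prop:certificate} and~\ref{prop:face}.
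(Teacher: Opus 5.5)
Your proof is correct and follows essentially the same route as the paper. You take $Z=Z^*$ for the forward direction. For the converse you set $s_i=Z_{ii}+\delta_i/4$, use complementarity with each $x^{(j)}(x^{(j)})^\top$, and identify $Z$ with $Z^*$ by dual uniqueness. You then factor with $d(W)$ rows using $\ker(Y^\top Y)=\ker Y$.

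The only cosmetic difference is how nonzero columns are obtained. You get them from $Z_{ii}>0$ in Proposition~\ref{prop:certificate}, whereas the paper argues directly that a zero column would isolate its vertex; this is the same fact that underlies that strict inequality.
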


\begin{proof}
For an exact instance, the optimal slack is positive semidefinite, its off-diagonal entries are $W_{ij}/4\ge0$, and its diagonal entries are nonnegative. It is therefore doubly nonnegative, and complementary slackness gives $Z^*x^{(j)}=0$ for every optimal cut.

Conversely, suppose that $Z$ satisfies the stated conditions. Set $s_i=Z_{ii}+\delta_i/4$. Then
\[
Z=\Diag(s)-\tfrac14L_W
\]
is dual feasible and complementary to each primal feasible matrix $x^{(j)}(x^{(j)})^\top$. Thus the relaxation is exact and all the prescribed cuts are optimal. Proposition~\ref{prop:certificate} identifies $Z$ with the unique optimal slack.

A Gram factorization with $\rank Z=d(W)$ independent rows gives the remaining identities, using $\ker(Y^\top Y)=\ker Y$. A zero column would make every edge weight incident to the corresponding vertex zero, contrary to the assumption that there are no isolated vertices.
\end{proof}

For such a factor, $c(W)=\sigma(Y)$. The problem is therefore to determine how much of $\ker Y$ is generated by its sign vectors. Repeated columns provide a useful separation. If the same column occurs several times, its contribution to $Yx$ depends only on the number of negative signs assigned to those positions. Once those numbers are fixed, signs can still be rearranged within each group.

These rearrangements produce kernel directions whenever a group contains both signs in at least one feasible assignment. We call such a group active. The next theorem shows that each active group contributes its entire zero-sum subspace. The remaining dimension is determined by the feasible sums across groups.

\Needspace{17\baselineskip}
\begin{theorem}[Multiplicity formula]\label{thm:multiplicity}
Let $Y\in\mathbb R^{d\times n}$. Let $b_1,\ldots,b_r$ be its distinct columns, with multiplicities $m_1,\ldots,m_r$, and put $m=(m_1,\ldots,m_r)$. Define
\begin{align*}
T&=\sum_{t=1}^r m_tb_t,\\
S&=\left\{s\in\mathbb Z^r:0\le s_t\le m_t,\ \sum_{t=1}^r s_tb_t=T/2\right\},\\
\mathcal A&=\{t:\text{there exists }s\in S\text{ with }0<s_t<m_t\},\\
Q&=\spn\{m-2s:s\in S\}.
\end{align*}
Then
\begin{equation}\label{eq:dimensionformula}
\sigma(Y)=\sum_{t\in\mathcal A}(m_t-1)+\dim Q.
\end{equation}
In particular, the sign vectors span $\ker Y$ if and only if
\begin{enumerate}[label=(\roman*)]
\item every $t$ with $m_t\ge2$ belongs to $\mathcal A$; and \label{cond:1}
\item $Q=\{z\in\mathbb R^r:\sum_tz_tb_t=0\}$. \label{cond:2}
\end{enumerate}
\end{theorem}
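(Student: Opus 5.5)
Write $V=\spn(\ker Y\cap\sgn^n)$ and index coordinates by blocks $I_1,\dots,I_r$, where $I_t$ is the set of positions holding column $b_t$, so $|I_t|=m_t$. For a sign vector $x$ let $s_t(x)$ be the number of positions in $I_t$ where $x$ is $-1$. Then
\[
Yx=\sum_t(m_t-2s_t(x))b_t .
\]
Hence $x\in\ker Y$ if and only if $s(x)=(s_1(x),\dots,s_r(x))\in S$. The plan is to study the linear \emph{block-sum} map
\[
\pi:\mathbb R^n\to\mathbb R^r,\qquad \pi(x)_t=\sum_{i\in I_t}x_i .
\]
We split $\dim V=\dim(V\cap\ker\pi)+\dim\pi(V)$ and identify the two terms with the two summands of \eqref{eq:dimensionformula}.

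\emph{The image.} For a kernel sign vector $x$ we have $\pi(x)=m-2s(x)$, and every $s\in S$ is realized by some such $x$: place $s_t$ minus signs anywhere in block $I_t$. Since $\pi(V)$ is the span of the $\pi(x)$ over kernel sign vectors $x$, this gives $\pi(V)=Q$ at once.

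\emph{The kernel part.} Here we show
\[
V\cap\ker\pi=\bigoplus_{t\in\mathcal A}Z_t ,
\]
where $Z_t$ is the zero-sum subspace of $\mathbb R^{I_t}$, of dimension $m_t-1$.

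For ``$\supseteq$'': take $t\in\mathcal A$ and $s\in S$ with $0<s_t<m_t$. Choose a kernel sign vector $x$ with $s(x)=s$ and positions $i,j\in I_t$ with $x_i=1$, $x_j=-1$. Swapping these two signs gives another kernel sign vector $x'$, and $x-x'=2(e_i-e_j)$. Because the transposition action of the symmetric group on $I_t$ preserves the kernel sign vectors, conjugating shows that all $e_a-e_b$ with $a,b\in I_t$ lie in $V$. These differences span $Z_t$.

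For ``$\subseteq$'', which is the main obstacle: we must show that $V\cap\ker\pi$ has no component in $\mathbb R^{I_t}$ for inactive blocks $t$, and no cross-block mixing beyond $\bigoplus Z_t$. For an inactive $t$, every kernel sign vector is constant on $I_t$, because $s_t\in\{0,m_t\}$ for all $s\in S$. So every $v\in V$ is constant on $I_t$. If moreover $\pi(v)_t=0$, then $v|_{I_t}=0$. For active $t$ the restriction $v|_{I_t}$ is arbitrary, but $v\in\ker\pi$ forces it into $Z_t$. Hence
\[
V\cap\ker\pi\subseteq\bigoplus_{t\in\mathcal A}Z_t .
\]
Together with ``$\supseteq$'' this gives equality, and summing dimensions yields \eqref{eq:dimensionformula}. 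Blocks with $m_t=1$ contribute $0$ either way, so membership of such $t$ in $\mathcal A$ is harmless.

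\emph{The spanning criterion.} Apply the same decomposition to $\ker Y$ itself. One has $\ker Y\cap\ker\pi=\bigoplus_t Z_t$ over all $t$. Also $\pi(\ker Y)=\{z:\sum_tz_tb_t=0\}$: the inclusion $\subseteq$ is immediate, and $\supseteq$ follows by lifting $z$ to $\sum_t(z_t/m_t)\one_{I_t}$. Therefore $V=\ker Y$ if and only if both pieces match. Matching kernel parts means $\sum_{t\in\mathcal A}(m_t-1)=\sum_t(m_t-1)$, which is condition~\ref{cond:1}. Matching images means $Q$ equals that space, which is condition~\ref{cond:2}. This uses $V\subseteq\ker Y$ and $Q\subseteq\{z:\sum_tz_tb_t=0\}$ to turn the dimension equalities into equalities of subspaces.
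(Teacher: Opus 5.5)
Your proof is correct and is essentially the paper's argument. It uses the same block-sum map $\pi$, identifies $\pi(V)=Q$ through realizable count vectors, generates each active block's zero-sum subspace by transpositions, and uses constancy on inactive blocks. The only difference is bookkeeping: you apply rank--nullity to $\pi|_V$ and compute $V\cap\ker\pi$ directly, as the paper does for Proposition~\ref{prop:binarydimension}, whereas the paper's proof splits $V$ using the permutation-averaging projection onto block-constant vectors; your comparison with the corresponding decomposition of $\ker Y$ is equivalent to the paper's two dimension bounds.
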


\begin{proof}
Let $V_t$ index the columns equal to $b_t$, and let $\pi:\mathbb R^n\to\mathbb R^r$ record the sums over these blocks:
\[
\pi(w)_t=\sum_{i\in V_t}w_i.
\]
For a sign vector $x$, let $s_t(x)$ count its negative entries in $V_t$. Then
\[
\pi(x)=m-2s(x),
\qquad
Yx=T-2\sum_t s_t(x)b_t.
\]
Thus $x\in\ker Y$ precisely when $s(x)\in S$, and every count vector in $S$ is realized by a sign vector. In particular, if
\[
U=\spn(\ker Y\cap\sgn^n),
\]
then $\pi(U)=Q$.

For each block $V_t$, let $K_t$ be the subspace of vectors supported on $V_t$ whose coordinates sum to zero. Suppose first that $t\in\mathcal A$. Some kernel sign vector has both signs in this block. By permuting its entries within the block, we may place opposite signs at any prescribed pair of distinct positions $i,j\in V_t$. Interchanging those signs gives another kernel sign vector, and their difference is $\pm2(e_i-e_j)$. These differences span $K_t$, so $K_t\subseteq U$. If $t\notin\mathcal A$, every kernel sign vector is constant on $V_t$, and hence every vector in $U$ is constant there.

To separate these within-block directions from the block sums, average over all permutations within each block. This gives a linear projection $P$ onto the block-constant vectors, with
\[
(Pw)_i=\frac{\pi(w)_t}{m_t}\qquad(i\in V_t).
\]
The set of kernel sign vectors is invariant under these permutations, so $P(U)\subseteq U$. For every kernel sign vector $x$, the difference $x-Px$ belongs to $\bigoplus_{t\in\mathcal A}K_t$. Consequently,
\[
U=\left(\bigoplus_{t\in\mathcal A}K_t\right)\oplus P(U).
\]
The sum is direct because a block-constant vector with zero sum on every block is zero. Moreover, $\pi$ is injective on the block-constant subspace and
\[
\pi(P(U))=\pi(U)=Q.
\]
Taking dimensions proves \eqref{eq:dimensionformula}.

Finally, the distinct columns span the column space of $Y$, so the space in condition~\ref{cond:2} has dimension $r-\rank Y$. The two contributions in \eqref{eq:dimensionformula} satisfy
\[
\sum_{t\in\mathcal A}(m_t-1)\le n-r,
\qquad
\dim Q\le r-\rank Y.
\]
The first bound is attained precisely when every block of size at least two is active, and the second precisely when condition~\ref{cond:2} holds. Their sum equals $\dim\ker Y=n-\rank Y$ exactly under the two stated conditions. Neither nonzero columns nor full row rank is required.
\end{proof}

The two conditions address different possible losses of dimension. An inactive repeated-column block loses its within-block zero-sum directions. Even when every such block is active, the feasible block sums may fail to span all relations among the distinct columns. The following example isolates this second obstruction.

\begin{example}
Take $Y=v^\top$ with $v=(1,1,2,2)$. The distinct columns are $b_1=1$ and $b_2=2$, each with multiplicity two, and $T=6$. The count equation is
\[
s_1+2s_2=3,\qquad 0\le s_1,s_2\le2.
\]
Its only integer solution is $s=(1,1)$. Thus both blocks are active, but every feasible block-sum vector is zero:
\[
S=\{(1,1)\},
\qquad
\mathcal A=\{1,2\},
\qquad
Q=\{0\}.
\]
The multiplicity formula gives
\[
\sigma(Y)=(2-1)+(2-1)+0=2.
\]

Indeed, the kernel sign vectors are exactly those satisfying $x_1=-x_2$ and $x_3=-x_4$. Their freedom lies entirely within the two repeated-column blocks. The space of relations among the distinct columns,
\[
\{z\in\mathbb R^2:z_1+2z_2=0\},
\]
is one-dimensional, but none of this additional direction is generated by the feasible block sums. Condition~\ref{cond:1} holds and condition~\ref{cond:2} fails, so the sign vectors do not span the three-dimensional kernel.

This example also shows that activity does not require a block count to vary between feasible patterns: there is only one count pattern here, and both blocks are active.
\end{example}

The decomposition also applies to binary sets invariant under permutations within blocks. We record this general form separately; its proof uses the same symmetry argument. The equality-knapsack interpretation then follows by changing from signs to binary variables.

\begin{proposition}\label{prop:binarydimension}
Let $V_1,\ldots,V_r$ partition $\{1,\ldots,n\}$ into nonempty blocks of sizes $m_1,\ldots,m_r$. Suppose that $F\subseteq\{0,1\}^n$ is nonempty and invariant under all independent permutations within these blocks. Let $\pi$ record block sums, set $S_F=\pi(F)$, and put
\[
\mathcal A_F=\{t:\text{there exists }s\in S_F\text{ with }0<s_t<m_t\}.
\]
Then
\[
\dim\conv F=\sum_{t\in\mathcal A_F}(m_t-1)+\dim\conv S_F.
\]
\end{proposition}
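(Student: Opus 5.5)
The plan is to mirror the proof of Theorem~\ref{thm:multiplicity}, replacing linear spans by affine hulls. Write $\operatorname{aff}$ for the affine hull, so that $\dim\conv F=\dim\operatorname{aff}F$. Fix a point $f_0\in F$ and set $U=\spn\{f-f_0:f\in F\}$; then $\dim\conv F=\dim U$. Likewise, with $\pi$ the block-sum map as before, $\dim\conv S_F=\dim\spn\{s-\pi(f_0):s\in S_F\}$, and this span equals $\pi(U)$ because $\pi(F)=S_F$.

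\textbf{Within-block directions.} For each block let $K_t$ be the zero-sum subspace of vectors supported on $V_t$. If $t\in\mathcal A_F$, some $f\in F$ has both a $0$ and a $1$ in $V_t$. By block invariance, for any distinct $i,j\in V_t$ there are $f,f'\in F$ differing only by swapping these entries, so $f-f'=\pm(e_i-e_j)\in U$. These differences span $K_t$, hence $K_t\subseteq U$. If $t\notin\mathcal A_F$, every $f\in F$ is constant on $V_t$: all entries are $0$ when $s_t=0$ and all are $1$ when $s_t=m_t$. Thus every vector in $U$ is constant on $V_t$.

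\textbf{Averaging.} Let $P$ be the average over all block permutations; it is the projection onto block-constant vectors, with $(Pw)_i=\pi(w)_t/m_t$ for $i\in V_t$. Since $F$ is invariant, each permuted copy $\tau f$ lies in $F$, so $\tau f-f_0\in U$. Also $\tau f_0-f_0\in\bigoplus_{t\in\mathcal A_F}K_t\subseteq U$, because $f_0$ is constant on the inactive blocks. Hence $\tau(f-f_0)\in U$, and averaging gives $P(U)\subseteq U$.

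\textbf{Decomposition.} For $u\in U$, the difference $u-Pu$ has zero sum on each block and vanishes on the inactive blocks, since $u$ is constant there. Therefore
\[
U=\Bigl(\bigoplus_{t\in\mathcal A_F}K_t\Bigr)\oplus P(U).
\]
The sum is direct for the same reason as before: a block-constant vector with zero sum on every block is zero. The map $\pi$ is injective on block-constant vectors and satisfies $\pi(P(U))=\pi(U)$. Hence
\[
\dim P(U)=\dim\spn\{s-\pi(f_0):s\in S_F\}=\dim\conv S_F,
\]
and adding $\dim K_t=m_t-1$ over $t\in\mathcal A_F$ gives the stated formula.

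The main point requiring care is passing from linear to affine spans, that is, checking that $U$ is invariant under the block permutations even though $f_0$ is not fixed by them. This is handled by the observation above that $\tau f_0-f_0$ lies in the active $K_t$ already shown to be contained in $U$. The rest of the argument is a direct transcription of the proof of Theorem~\ref{thm:multiplicity}.
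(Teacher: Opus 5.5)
Your proof is correct and follows essentially the paper's route: the swap argument on active blocks, constancy on inactive blocks, and the block-sum map $\pi$. The paper is slightly more economical. It applies rank--nullity to $\pi$ restricted to $L=\spn(F-F)$, where $L\cap\ker\pi=\bigoplus_{t\in\mathcal A_F}K_t$ follows directly from those two observations, so neither the averaging projector nor the invariance $P(U)\subseteq U$ is needed. That invariance, which you flag as the delicate point, is in any case immediate: $U=\spn(F-F)$ and each block permutation maps $F-F$ into itself.
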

\begin{proof}
Let $L=\spn(F-F)$, and let $K_t$ be the zero-sum subspace supported on $V_t$. In an active block, permuting a feasible binary vector and swapping a zero and a one generates every coordinate difference, so $K_t\subseteq L$. In an inactive block, every feasible vector is constant on that block, and hence so is every vector in $L$. Consequently,
\[
L\cap\ker\pi=\bigoplus_{t\in\mathcal A_F}K_t,
\qquad
\pi(L)=\spn(S_F-S_F).
\]
The identity follows by rank--nullity.
\end{proof}

\begin{remark}[Equality knapsack form]\label{rem:knapsack}
The affine bijection
\[
x\longmapsto\chi=\tfrac12(\one-x)
\]
maps $\ker Y\cap\sgn^n$ onto
\[
\mathcal P(Y)=\left\{\chi\in\{0,1\}^n:Y\chi=\tfrac12T\right\},
\qquad T=Y\one.
\]
Suppose that this set is nonempty. The kernel sign vectors are symmetric under $x\mapsto-x$, so their affine hull contains the origin and equals their linear span. Hence
\[
\sigma(Y)=\dim\conv\mathcal P(Y).
\]
Theorem~\ref{thm:multiplicity} is therefore a dimension formula for an equality-constrained binary polytope. For one positive integer row, this is an equality-knapsack polytope in the usual sense \cite{Matsui94}.

For $F=\mathcal P(Y)$, Proposition~\ref{prop:binarydimension} specializes to \eqref{eq:dimensionformula}: complement symmetry gives $S=m-S$ and $\dim\conv S=\dim Q$.

This identity also recovers Lee's full-dimensionality result \cite[Theorem~2.1(a)]{Lee97}. Let $b\ge2$ and write
\[
S_b=\conv\left\{x\in\{0,1\}^n:
\sum_{i=1}^b i\sum_{j\in R_i}x_j=b\right\},
\]
where the sets $R_i$ partition the coordinates, allowing empty classes. Let $m_i=|R_i|$, $I=\{i:m_i>0\}$, and $r=|I|$. Under $m_1\ge b+1$, the feasible count patterns
\[
be_1,\qquad (b-i)e_1+e_i\quad(i\in I\setminus\{1\})
\]
show that every block of size at least two is active. These $r$ patterns are affinely independent, and all feasible patterns lie in the defining coefficient hyperplane, so the count polytope has dimension $r-1$. Consequently,
\[
\dim S_b=(n-r)+(r-1)=n-1.
\]

For the balanced polytope $\mathcal P(Y)$, conditions~\ref{cond:1} and~\ref{cond:2} characterize full dimensionality in its defining affine space, whose dimension is $n-\rank Y$. As a basic example, when $Y=\one^\top$ and $n$ is even, $\mathcal P(Y)$ is the hypersimplex of $n/2$-element subsets of $\{1,\ldots,n\}$. Its dimension is $n-1$, giving $\sigma(\one^\top)=n-1$.
\end{remark}

The multiplicity formula uses no positivity assumption. Its computational benefit depends on the number of distinct columns: the set $S$ can be found by examining at most
\[
\prod_{t=1}^r(m_t+1)
\]
count vectors. If all columns are distinct, this is still $2^n$, and $Q$ is the original sign span in the same coordinates. For fixed $r$, however, the bound is at most $(n+1)^r$. Suppose that all entries of $Y$ are supplied explicitly as ratios of binary-encoded integers, and let $L$ denote the total input length. Each count-vector test uses exact rational arithmetic of bit complexity polynomial in $L$; a basis of the feasible block sums can be maintained by exact rank tests. Thus the total bit complexity is $(n+1)^r\operatorname{poly}(L)$, polynomial for each fixed $r$. This enumeration bound is not a fixed-parameter tractability claim. For arbitrary real matrices, the theorem remains an algebraic criterion; an exact algorithm additionally requires a representation supporting those tests. The applications in Theorem~\ref{thm:rankone}, Corollary~\ref{cor:maxspan}, Theorem~\ref{thm:ranktwo}, and Remark~\ref{prop:signed} use this reduction to feasible subset counts and a span calculation.

There is another situation in which the binary-polytope interpretation immediately determines the span. For a totally unimodular factor, integrality of the defining right-hand side ensures that the continuous slice of the cube is already the convex hull of its binary points. Since the center of the cube lies in that slice, these points generate every kernel direction.

\begin{proposition}\label{prop:tu}
Let $Y\in\mathbb Z^{d\times n}$ be totally unimodular. Then
\[
\sigma(Y)=
\begin{cases}
n-\rank Y,&Y\one\in(2\mathbb Z)^d,\\
0,&\text{otherwise}.
\end{cases}
\]
\end{proposition}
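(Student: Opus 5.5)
Via the affine bijection of Remark~\ref{rem:knapsack}, $x\mapsto\chi=\tfrac12(\one-x)$, the kernel sign vectors of $Y$ correspond to the binary points of the polyhedron
\[
P=\{\chi\in[0,1]^n:Y\chi=\tfrac12T\},\qquad T=Y\one.
\]
Whenever these points exist, $\sigma(Y)=\dim\conv(P\cap\{0,1\}^n)$. The proof splits by the parity of $T$.

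\emph{Odd case.} Suppose some coordinate of $T=Y\one$ is odd. For any sign vector $x$, we have $Yx=T-2Y\chi$ with $Y\chi\in\mathbb Z^d$. Hence $Yx\equiv T\pmod 2$ coordinatewise, so $Yx\neq0$. Thus $\ker Y\cap\sgn^n=\varnothing$ and $\sigma(Y)=0$, using $\spn\varnothing=\{0\}$. This step uses only integrality of $Y$.

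\emph{Even case, integrality of $P$.} Now suppose $T\in(2\mathbb Z)^d$, so the right-hand side $\tfrac12T$ is integral. The constraint matrix of $P$ stacks $Y$, $-Y$, $I$ and $-I$. Total unimodularity of $Y$ is preserved by appending identity rows and negating rows, so this stacked matrix is totally unimodular. With integral right-hand sides, the Hoffman--Kruskal theorem then makes every vertex of $P$ integral. Since $P\subseteq[0,1]^n$, its vertices are exactly the binary points, and $P$ is a polytope. Therefore $P=\conv(P\cap\{0,1\}^n)$.

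\emph{Even case, dimension of $P$.} The point $\tfrac12\one$ lies in $P$ because $Y(\tfrac12\one)=\tfrac12T$. It also lies in the interior of the cube. Hence a small ball around $\tfrac12\one$ inside the affine space
\[
\{\chi:Y\chi=\tfrac12T\}
\]
is contained in $P$. So $P$ has full dimension $n-\rank Y$ within that affine space. In particular $P$ is nonempty, so the binary points exist and Remark~\ref{rem:knapsack} applies, giving
\[
\sigma(Y)=\dim P=n-\rank Y.
\]
One can also argue directly. The vertices of $P$ are kernel sign vectors after the affine map, and their affine hull has dimension $n-\rank Y$ and contains $\tfrac12\one$, which corresponds to $0$. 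So the linear span of the kernel sign vectors is all of $\ker Y$.

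The main obstacle is the integrality step. I must check that the equality-plus-box system really has a totally unimodular constraint matrix. I must also check that the theorem yields vertices of the polytope itself, not merely of some face of it. The other steps are routine.
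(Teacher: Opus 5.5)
Your proposal is correct and follows essentially the same route as the paper. In the odd case, integrality of $Y$ rules out kernel sign vectors. In the even case, total unimodularity of the box-augmented system makes every vertex of $P$ binary, and the cube-interior point $\tfrac12\one$ gives $\operatorname{aff}P=\tfrac12\one+\ker Y$.
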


\begin{proof}
Let $b=Y\one/2$ and
\[
P=\{z\in[0,1]^n:Yz=b\}.
\]
If $b$ has a nonintegral coordinate, no binary vector is feasible because $Y$ is integral. The substitution $x=\one-2z$ then shows that $\ker Y$ contains no sign vector.

Suppose that $b$ is integral. Appending signed unit rows and negating rows preserve total unimodularity. Thus every vertex of the bounded polyhedron $P$ is integral, by the unimodular basis criterion, and hence binary. The point $\one/2$ satisfies every box inequality strictly, so
\[
\operatorname{aff}P=\one/2+\ker Y.
\]
The affine map $z\mapsto\one-2z$ sends $P$ onto the convex hull of the kernel sign vectors and sends its affine hull onto $\ker Y$. Those sign vectors therefore span $\ker Y$.
\end{proof}

To return from these span calculations to Max-Cut, the factor must have nonnegative pairwise inner products. Under that condition, any feasible sign pattern supplies an exactness certificate, and the multiplicity formula computes the span of all optimal cuts.

\begin{corollary}\label{cor:construction}
Let $Y$ have nonzero columns with pairwise nonnegative inner products, and define
\[
W_{ij}=4\langle y_i,y_j\rangle\quad(i\ne j),
\qquad W_{ii}=0.
\]
Suppose that the support graph has no isolated vertices. If the feasible count set $S$ is nonempty, then the relaxation is exact,
\[
Z^*=Y^\top Y,
\qquad
c(W)=\sigma(Y).
\]
Conversely, every nonnegatively weighted exact instance without isolated vertices has such a representation with $S\ne\varnothing$.
\end{corollary}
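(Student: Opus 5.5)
The plan is to read both directions off Proposition~\ref{prop:dnn} and Theorem~\ref{thm:multiplicity}, using the observation that nonemptiness of $S$ is the same as the existence of a sign vector in $\ker Y$.

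For the forward direction, I would start from the proof of Theorem~\ref{thm:multiplicity}. A sign vector $x$ lies in $\ker Y$ precisely when its count vector $s(x)$ lies in $S$, and every element of $S$ is realized by some sign vector. So if $S\ne\varnothing$, we can fix one $x\in\ker Y\cap\sgn^n$. Set $Z=Y^\top Y$. It is positive semidefinite, and its off-diagonal entries $\langle y_i,y_j\rangle\ge0$ are nonnegative by hypothesis. Its diagonal entries $\|y_i\|^2$ are positive, so $Z$ is doubly nonnegative. By construction $W_{ij}=4Z_{ij}$ for $i\ne j$, and $W\ge0$ has no isolated vertices by assumption.

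With these facts, Proposition~\ref{prop:dnn} applied with $k=1$ and the single sign vector $x$ (since $Zx=Y^\top Yx=0$) gives three conclusions: the relaxation is exact, $x$ is optimal, and $Z=Z^*$. Then Proposition~\ref{prop:face}, equation~\eqref{eq:cuts}, gives
\[
\mathcal C(W)=\ker Z^*\cap\sgn^n=\ker Y\cap\sgn^n,
\]
using $\ker(Y^\top Y)=\ker Y$. Taking spans yields $c(W)=\sigma(Y)$.

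For the converse, take an exact instance and apply the last part of Proposition~\ref{prop:dnn} to obtain a Gram factor $Z^*=Y^\top Y$. This factor has $d(W)$ rows and nonzero columns, and it satisfies $W_{ij}=4\langle y_i,y_j\rangle$, so its pairwise inner products are nonnegative. The instance is exact, so some maximum cut $x$ exists, and by complementary slackness $Yx=0$. Hence $s(x)\in S$, so $S\ne\varnothing$.

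The only point requiring care is the identification of $S\ne\varnothing$ with the existence of a kernel sign vector. I expect no real obstacle there, because the identity $Yx=T-2\sum_t s_t(x)b_t$ in the proof of Theorem~\ref{thm:multiplicity} already gives both directions of that equivalence.
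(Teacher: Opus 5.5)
Your proposal is correct and follows the paper's proof. A count vector in $S$ gives a kernel sign vector, Proposition~\ref{prop:dnn} yields exactness and $Z^*=Y^\top Y$, equation~\eqref{eq:cuts} identifies $\mathcal C(W)$ with $\ker Y\cap\sgn^n$, and the converse factors the optimal slack via Proposition~\ref{prop:dnn}; you additionally spell out details the paper leaves implicit (double nonnegativity of $Y^\top Y$, $\ker(Y^\top Y)=\ker Y$, and that a maximum cut's count vector lies in $S$).
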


\begin{proof}
A count vector in $S$ gives a sign vector in $\ker Y$, so Proposition~\ref{prop:dnn} applies. Equation~\eqref{eq:cuts} identifies all optimal cut sign vectors with the sign vectors in $\ker Y$, proving $c(W)=\sigma(Y)$. Conversely, factor the optimal slack of an exact instance and apply Proposition~\ref{prop:dnn}.
\end{proof}

\subsection{Replication and realization of dual certificates}
\label{sec:replication}

The multiplicity formula suggests a way to construct matrices whose kernels are spanned by sign vectors. Repeating every column an even number of times allows a balanced choice of signs within every block. This immediately supplies kernel sign vectors and makes all within-block zero-sum directions available. The remaining question is whether the feasible block sums generate every relation among the original columns.

Because these block sums are integers, the answer depends on the integer vectors in the original kernel. We first make this dependence exact. Throughout the replication results, the columns of $B$ need not be distinct or nonzero, and $B$ need not have full row rank.

\begin{theorem}[Replication]\label{thm:replication}
Let $B\in\mathbb R^{d\times r}$ and $k\in\mathbb Z_{>0}^r$. Set $Y=B^{(2k)}$ and $n=2\sum_{t=1}^r k_t$. Then
\begin{equation}\label{eq:replication}
\sigma(Y)=\sum_{t=1}^r(2k_t-1)+\dim L_k(B).
\end{equation}
In particular, the sign vectors in $\ker Y$ span $\ker Y$ if and only if $L_k(B)=\ker B$.
\end{theorem}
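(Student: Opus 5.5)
The plan is to rerun the symmetry argument from the proof of Theorem~\ref{thm:multiplicity}. Instead of grouping the columns of $Y=B^{(2k)}$ by distinct values, I would group them by the replication blocks $V_1,\ldots,V_r$, where $|V_t|=2k_t$ and every column in $V_t$ equals $b_t$, the $t$th column of $B$. Columns of $B$ may coincide, so several blocks $V_t$ can carry the same column. The first step is therefore to check that the proof of \eqref{eq:dimensionformula} never used distinctness. It only used that the columns within a block are equal, which makes the kernel sign vectors invariant under permutations within each block and makes $Yx$ depend only on the count vector $s(x)$. Distinctness entered only in the final spanning criterion, through the identification $\dim\{z:\sum_t z_tb_t=0\}=r-\rank Y$. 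So for any partition into blocks of equal columns we get
\[
\sigma(Y)=\sum_{t\in\mathcal A}(|V_t|-1)+\dim Q,
\]
where $S$, $\mathcal A$ and $Q$ are defined as in Theorem~\ref{thm:multiplicity} with $m=2k$.

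Next I would compute these objects explicitly. Here $T=\sum_t 2k_tb_t$, so $s\in S$ means $0\le s_t\le 2k_t$ and $\sum_t(k_t-s_t)b_t=0$. The substitution $z=k-s$ is a bijection from $S$ onto $\{z\in\ker B\cap\mathbb Z^r:|z_t|\le k_t\}$. Hence
\[
Q=\spn\{2k-2s:s\in S\}=\spn\{2z\}=L_k(B).
\]
The choice $z=0$, that is $s=k$, is feasible and satisfies $0<k_t<2k_t$ for every $t$. So every block is active, including blocks of size $2$. Substituting gives \eqref{eq:replication}.

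For the spanning criterion, I would note that $\rank Y=\rank B$ because the two matrices have the same column space. Then $\dim\ker Y=n-\rank B=\sum_t(2k_t-1)+(r-\rank B)$. By \eqref{eq:replication}, $\sigma(Y)=\dim\ker Y$ if and only if $\dim L_k(B)=r-\rank B=\dim\ker B$. Since $L_k(B)\subseteq\ker B$, this holds exactly when $L_k(B)=\ker B$.

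The only delicate point is the first step. I need to verify that the direct-sum decomposition $U=\bigl(\bigoplus_{t}K_t\bigr)\oplus P(U)$ and the identity $\pi(U)=Q$ survive when the blocks are not maximal classes of equal columns. I would check them line by line: realizability of every count vector in $S$, invariance of the kernel sign vectors under block permutations, and injectivity of $\pi$ on block-constant vectors. None of these uses distinctness.
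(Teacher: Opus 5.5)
Your proposal is correct. Its central computation matches what drives the paper's proof: the substitution $z=k-s$ identifies $S$ with the integer points of $\ker B$ in the box $|z_t|\le k_t$, which gives $Q=L_k(B)$, and $s=k$ makes every block active.

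The difference is packaging. You route through Theorem~\ref{thm:multiplicity}, which forces you to re-verify its proof for replication blocks that need not be maximal classes of equal columns. Your check is right. The paper never invokes the multiplicity formula and argues directly instead. Let $C$ be the block-sum map, so $Y=BC$. Balanced sign vectors combined with within-block swaps give $\ker C\subseteq V=\spn(\ker Y\cap\sgn^n)$ at once. One also has $C(\ker Y\cap\sgn^n)=\{2z:z\in\ker B\cap\mathbb Z^r,\ |z_t|\le k_t\}$. Rank--nullity for $C|_V$ then yields $\dim V=(n-r)+\dim L_k(B)$. Since every block is visibly active from the start, the paper's route needs neither the averaging projection nor the direct-sum decomposition. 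Coinciding columns of $B$ then need no separate discussion. Your route buys reuse of an already-proved formula, at the cost of that line-by-line re-verification.

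One small correction to your first paragraph: distinctness is not used in the identity $\dim\{z:\sum_t z_tb_t=0\}=r-\rank Y$ either. The listed columns $b_1,\ldots,b_r$ span $\range Y$ whether or not they repeat. In fact, the proof of Theorem~\ref{thm:multiplicity} never uses distinctness anywhere, so this slip is harmless to your argument.
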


\begin{proof}
Let
\[
V=\spn(\ker Y\cap\sgn^n),
\]
and let $C:\mathbb R^n\to\mathbb R^r$ record the sums over the $r$ replication blocks. Then $Y=BC$, and
\[
\ker C
=
\bigoplus_{t=1}^r
\{u\in\mathbb R^{2k_t}:\one^\top u=0\},
\qquad
\dim\ker C=n-r.
\]

Every sign vector balanced within each block belongs to $\ker Y$. For any two positions in the same block, choose such a vector with opposite signs at those positions and interchange them. The difference of the resulting kernel sign vectors is twice the corresponding coordinate difference. These differences span $\ker C$, so $\ker C\subseteq V$.

The block sums of a sign vector are precisely the vectors $2z$ with $z\in\mathbb Z^r$ and $|z_t|\le k_t$ for every $t$. The additional condition $Yx=0$ is equivalent to $Bz=0$. Hence
\[
C(\ker Y\cap\sgn^n)
=
\{2z:z\in\ker B\cap\mathbb Z^r,\ |z_t|\le k_t\text{ for every }t\}.
\]
Taking spans gives $C(V)=L_k(B)$. Rank--nullity applied to $C|_V$ now yields
\[
\sigma(Y)=\dim V
=
\dim\ker C+\dim L_k(B)
=
n-r+\dim L_k(B),
\]
which proves \eqref{eq:replication}.

Finally, $\rank Y=\rank B$, so $\dim\ker Y=n-\rank B$. Since $L_k(B)\subseteq\ker B$, the equality $\sigma(Y)=\dim\ker Y$ holds precisely when $L_k(B)=\ker B$.
\end{proof}

The first term in \eqref{eq:replication} is already as large as possible. Increasing the replication sizes affects the remaining obstruction by admitting more integer vectors into the coordinate box defining $L_k(B)$. A rational kernel has a basis of integer vectors, so a sufficiently large box contains enough vectors to span it. Conversely, the block sums of sign vectors can never generate a subspace that has no integer spanning set. This gives the exact criterion.

\begin{corollary}[Rationality is the exact obstruction]\label{cor:rational}
For $B\in\mathbb R^{d\times r}$, the following are equivalent:
\begin{enumerate}[label=(\alph*)]
\item Some positive replication $B^{(m)}$, with $m\in\mathbb Z_{>0}^r$ of arbitrary parity, has a sign-spanned kernel.
\item The subspace $\ker B$ is rational.
\item There is $k^0\in\mathbb Z_{>0}^r$ such that every $B^{(2k)}$ with $k\ge k^0$ coordinatewise has a sign-spanned kernel.
\end{enumerate}
In particular, sufficiently large uniform even replication suffices whenever any positive replication can suffice.
\end{corollary}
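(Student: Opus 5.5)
We will prove the cycle (b)$\Rightarrow$(c)$\Rightarrow$(a)$\Rightarrow$(b). For (b)$\Rightarrow$(c), we use the fact that a rational $\ker B$ has a basis $z^{(1)},\ldots,z^{(p)}$ of integer vectors, where $p=\dim\ker B$ (possibly $p=0$). Put $K=\max_j\|z^{(j)}\|_\infty$ and $k^0=\max(K,1)\one$. For $k\ge k^0$ coordinatewise, every $z^{(j)}$ satisfies $|z^{(j)}_t|\le k_t$, so $L_k(B)\supseteq\spn\{z^{(j)}\}=\ker B$. Theorem~\ref{thm:replication} then gives sign-spanning of $\ker B^{(2k)}$. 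The implication (c)$\Rightarrow$(a) is immediate by taking $m=2k^0$.

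For (a)$\Rightarrow$(b), suppose $Y=B^{(m)}$ has a sign-spanned kernel. Let $C:\mathbb R^n\to\mathbb R^r$ be the block-sum map, so that $Y=BC$. Since every block is nonempty, $C$ is surjective and $\ker Y=C^{-1}(\ker B)$, hence $C(\ker Y)=\ker B$. Sign-spanning gives $\ker Y=\spn(\ker Y\cap\sgn^n)$. Applying $C$ yields
\[
\ker B=\spn C(\ker Y\cap\sgn^n).
\]
Each $C(x)$ is an integer vector lying in $\ker B$, because $BCx=Yx=0$. Thus $\ker B$ is spanned by integer vectors, and a basis can be extracted from this spanning set, so $\ker B$ is rational. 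The degenerate case $\ker Y\cap\sgn^n=\varnothing$ forces $\ker Y=\{0\}$ by sign-spanning, and then $\ker B=C(\ker Y)=\{0\}$ is rational by convention. Note that parity of $m$ plays no role in this direction, which is the point of clause (a). The argument does not need Theorem~\ref{thm:multiplicity}, only the factorization $Y=BC$.

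The final sentence of the statement follows from (c) with $k=\max_t k^0_t\cdot\one$, and this also shows that the minimum defining $\kappa(B)$ exists. The only delicate step is checking $C(\ker Y)=\ker B$ in (a)$\Rightarrow$(b). This rests on surjectivity of $C$, which holds because every $m_t\ge1$, so each block contributes a coordinate whose sum can be set freely.
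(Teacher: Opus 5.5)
Your proof is correct and follows the paper's argument essentially verbatim. For (a)$\Rightarrow$(b) you use the block-sum map $C$ with $B^{(m)}=BC$ and $C(\ker B^{(m)})=\ker B$; for (b)$\Rightarrow$(c) you bound the replication box by an integer basis and apply Theorem~\ref{thm:replication}. The only differences are cosmetic: you take a uniform $k^0$ where the paper takes a coordinatewise maximum, and you treat the empty-sign-set case explicitly.
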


\begin{proof}
For a positive replication, let $C$ be the block-sum map, so that $B^{(m)}=BC$. Every block is nonempty, hence $C$ is onto and
\[
C(\ker B^{(m)})=\ker B.
\]
Indeed, any $z\in\ker B$ can be lifted to a vector whose block sums are $z_t$, and every such lift belongs to $\ker(BC)$. If $\ker B^{(m)}$ is spanned by sign vectors, their images under $C$ are integer vectors spanning $\ker B$. Thus (a) implies (b), without a parity assumption.

Suppose now that $\ker B$ is rational. Choose a real vector-space basis consisting of integer vectors $h^{(1)},\ldots,h^{(\ell)}$, where $\ell=\dim\ker B$, and set
\[
k_t^0
=
\max\bigl(\{1\}\cup\{|h_t^{(j)}|:1\le j\le\ell\}\bigr).
\]
For $k\ge k^0$, every basis vector belongs to the defining set of $L_k(B)$, so $L_k(B)=\ker B$. Theorem~\ref{thm:replication} proves (c). The same argument includes the zero kernel by taking an empty basis. Finally, (c) implies (a).
\end{proof}

Even when the kernel is not rational, the replication formula determines exactly how much of it remains inaccessible. The integer kernel vectors span a fixed subspace, and a finite collection of them suffices to span that subspace. Once the replication box contains such a collection, the deficit stabilizes.

\begin{corollary}\label{cor:deficit}
For every real matrix $B\in\mathbb R^{d\times r}$, there exists $k^0\in\mathbb Z_{>0}^r$ such that, for every $k\ge k^0$,
\[
\dim\ker B^{(2k)}-\sigma(B^{(2k)})
=
\dim\ker B-\ell_{\mathbb Z}(B).
\]
The same conclusion holds for all sufficiently large uniform even replications.
\end{corollary}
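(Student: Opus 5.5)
The plan is to combine the replication formula of Theorem~\ref{thm:replication} with a finiteness argument for the integer points of $\ker B$. By that theorem, for every $k\in\mathbb Z_{>0}^r$,
\[
\dim\ker B^{(2k)}-\sigma(B^{(2k)})
=
\bigl(n-\rank B\bigr)-\bigl(n-r+\dim L_k(B)\bigr)
=
\dim\ker B-\dim L_k(B),
\]
where $n=2\sum_tk_t$ and $\rank B^{(2k)}=\rank B$. The statement is therefore equivalent to showing that $\dim L_k(B)=\ell_{\mathbb Z}(B)$ for all $k$ coordinatewise beyond some threshold $k^0$.

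First, I show that $L_k(B)$ is always contained in $\spn(\ker B\cap\mathbb Z^r)$. This holds because $L_k(B)$ is spanned by a subset of $\ker B\cap\mathbb Z^r$. It gives $\dim L_k(B)\le\ell_{\mathbb Z}(B)$ for every $k$.

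Second, I choose a finite spanning set. The space $\spn(\ker B\cap\mathbb Z^r)$ has dimension $\ell=\ell_{\mathbb Z}(B)\le r$, so it is spanned by some $\ell$ integer kernel vectors $h^{(1)},\dots,h^{(\ell)}$. Any spanning set of a finite-dimensional space contains a basis, so these can be taken from $\ker B\cap\mathbb Z^r$ itself. Exactly as in the proof of Corollary~\ref{cor:rational}, I set
\[
k^0_t=\max\bigl(\{1\}\cup\{|h^{(j)}_t|:1\le j\le\ell\}\bigr).
\]
For $k\ge k^0$, every $h^{(j)}$ lies in the box defining $L_k(B)$, so $L_k(B)\supseteq\spn\{h^{(j)}\}=\spn(\ker B\cap\mathbb Z^r)$. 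Combined with the first step, this gives equality, and substituting into the displayed identity yields the claimed deficit. The case $\ell=0$ is covered by the empty basis, with $k^0=\one$.

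For the uniform statement, I take $k=K\one$ with $K\ge\max_tk^0_t$. Then $k\ge k^0$ coordinatewise, and the same conclusion applies. There is no real obstacle. The only point needing care is that the spanning family consists of genuine integer vectors of $\ker B$, not rational combinations. This is automatic because the span is generated by $\ker B\cap\mathbb Z^r$, and a basis can be extracted from any generating set.
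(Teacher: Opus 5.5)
Your proposal is correct and follows essentially the same route as the paper: it reduces, via Theorem~\ref{thm:replication}, to showing $L_k(B)=\spn(\ker B\cap\mathbb Z^r)$ for large $k$, and then places a finite integer basis of that span inside the coordinate box. You also spell out the inclusion $L_k(B)\subseteq\spn(\ker B\cap\mathbb Z^r)$ and the uniform case $k=K\one$, both of which the paper leaves implicit.
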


\begin{proof}
Choose a basis of $\spn(\ker B\cap\mathbb Z^r)$ from the integer vectors generating it, and choose $k^0$ large enough to contain this finite basis in its coordinate box. For $k\ge k^0$,
\[
L_k(B)=\spn(\ker B\cap\mathbb Z^r).
\]
With $n=2\sum_tk_t$, Theorem~\ref{thm:replication} gives
\[
\dim\ker B^{(2k)}-\sigma(B^{(2k)})
=
(n-\rank B)-(n-r+\ell_{\mathbb Z}(B))
=
\dim\ker B-\ell_{\mathbb Z}(B).
\]
If $\ell_{\mathbb Z}(B)=0$, take the empty basis and $k^0=\one$.
\end{proof}

For a rational kernel, the uniform threshold $\kappa(B)$ depends only on the subspace $\ker B$. The existence proof above bounds it by the largest coordinate needed in an integer spanning set. Such a set can be constructed from maximal minors of any full-row-rank integer matrix defining the same kernel.

\begin{proposition}\label{prop:replicationbound}
Let $A\in\mathbb Z^{s\times r}$ have full row rank and satisfy $\ker A=\ker B$. If $0<s<r$, let $\Delta(A)$ be the largest absolute value of an $s\times s$ minor of $A$. Then
\[
\kappa(B)\le\Delta(A).
\]
If $\ker B=\{0\}$ or $\ker B=\mathbb R^r$, then $\kappa(B)=1$. In particular, if $B$ is totally unimodular, then $\kappa(B)=1$.
\end{proposition}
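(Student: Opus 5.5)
We will prove the bound by exhibiting an explicit integer basis of $\ker B=\ker A$ whose entries are all bounded in absolute value by $\Delta(A)$. By the definition of $\kappa(B)$, any such basis lies in the coordinate box $|z_t|\le\Delta(A)$, so $L_{\Delta(A)\one}(B)=\ker B$. Since $\Delta(A)\ge1$ for an integer full-row-rank matrix, this gives $\kappa(B)\le\Delta(A)$.

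The basis comes from Cramer's rule, i.e.\ the generalized cross product.

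1. Because $A$ has full row rank $s$, choose $s$ columns, indexed by $\beta\subseteq\{1,\ldots,r\}$, with $\det A_\beta\ne0$.
2. For each $j\notin\beta$, define $h^{(j)}\in\mathbb Z^r$ as follows. Consider the $s\times(s+1)$ submatrix on the columns $\beta\cup\{j\}$, and let $h^{(j)}$ have coordinates equal to the signed $s\times s$ minors of this submatrix obtained by deleting one column at a time. All other coordinates of $h^{(j)}$ are zero.
3. Expanding the determinant of the $(s+1)\times(s+1)$ matrix formed by repeating any row of $A$ on top shows that $Ah^{(j)}=0$.
4. Every entry of $h^{(j)}$ is an $s\times s$ minor of $A$, up to sign, so $\|h^{(j)}\|_\infty\le\Delta(A)$.
5. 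The coordinate $h^{(j)}_j=\pm\det A_\beta\ne0$, while $h^{(i)}_j=0$ for $i\ne j$ outside $\beta$. Restricted to the coordinates outside $\beta$, the $r-s$ vectors therefore form a diagonal matrix with nonzero diagonal, so they are linearly independent.
6. Since $\dim\ker A=r-s$, these vectors form a basis of $\ker B$.

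There is no serious obstacle in this argument. The only points needing care are the sign bookkeeping in the cofactor identity of step 3 and the remark that $\Delta(A)\ge1$, which holds because a nonzero integer minor has absolute value at least $1$.

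The degenerate cases are handled separately. If $\ker B=\{0\}$, the empty basis gives $L_{\one}(B)=\ker B$, so $\kappa(B)=1$. If $\ker B=\mathbb R^r$, the unit vectors $e_t$ lie in the box $|z_t|\le1$ and span $\mathbb R^r$, so again $\kappa(B)=1$. In both cases $\kappa(B)\ge1$ holds by definition.

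Finally, suppose $B$ is totally unimodular. Its entries are in $\{0,\pm1\}$, so $B$ is integral and its kernel is rational. Keep a maximal set of linearly independent rows of $B$; the resulting submatrix $A$ is still totally unimodular, has full row rank, and satisfies $\ker A=\ker B$. All of its minors lie in $\{0,\pm1\}$, so $\Delta(A)\le1$ in the nondegenerate case $0<s<r$. Together with the degenerate cases, this gives $\kappa(B)=1$.
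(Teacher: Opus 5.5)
Your proof is correct and follows essentially the same route as the paper. The paper's vectors $h^{(j)}$, with $h^{(j)}_j=\det A_I$ and $h^{(j)}_I=-\operatorname{adj}(A_I)A_j$, are up to an overall sign exactly your generalized cross products on the columns $\beta\cup\{j\}$. The paper then uses the same minor bound, the same independence argument via disjoint supports outside the chosen nonsingular columns, and the same handling of the degenerate and totally unimodular cases. Your explicit observation that $\Delta(A)\ge1$, which makes $\Delta(A)$ an admissible value in the definition of $\kappa(B)$, is left implicit in the paper.
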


\begin{proof}
Suppose $0<s<r$, and choose a nonsingular $s$-column submatrix $A_I$. For each $j\notin I$, define $h^{(j)}\in\mathbb Z^r$ by
\[
h_j^{(j)}=\det A_I,
\qquad
h_I^{(j)}=-\operatorname{adj}(A_I)A_j,
\]
with all remaining coordinates zero. Then $Ah^{(j)}=0$. Each coordinate is, up to sign, an $s\times s$ minor of $A$, so
\[
\|h^{(j)}\|_\infty\le\Delta(A).
\]
The vectors $h^{(j)}$, $j\notin I$, are independent because their coordinates outside $I$ have disjoint nonzero supports. They form a real basis of $\ker A$, proving the bound.

The zero subspace needs no generators, while $\mathbb R^r$ is spanned by its coordinate vectors, so both extreme cases have threshold one. If $B$ is totally unimodular, select independent rows to obtain a full-row-rank matrix $A$ with the same kernel. In the nontrivial case, $\Delta(A)=1$, and the bound applies.
\end{proof}

We now apply replication to a prescribed doubly nonnegative matrix. Its Gram factor supplies the columns to be repeated. Positive diagonal entries ensure that copies of each column are joined by positive-weight edges, while balanced signs within each replication block certify exactness. The arithmetic criterion then determines when the optimal cuts span the entire dual kernel.

\begin{theorem}[Realization]\label{thm:realization}
Let $Z\in\mathbb R^{r\times r}$ be doubly nonnegative with positive diagonal, rank $d$, and rational kernel. For $k\in\mathbb Z_{>0}$, let $n=2kr$ and define
\[
W_{ij}=4(Z\otimes J_{2k})_{ij}\quad(i\ne j),
\qquad
W_{ii}=0.
\]
For every $k\ge1$, the support graph has no isolated vertices, the relaxation is exact, and
\[
Z^*=Z\otimes J_{2k},
\qquad
d(W)=d.
\]
For every $k\ge\kappa(Z)$, moreover,
\[
c(W)=n-d,
\]
so the optimal cut sign vectors span the entire dual kernel. The threshold $\kappa(Z)$ depends only on $\ker Z$. If the off-diagonal support of $Z$ is connected, then the support graph of $W$ is connected for every $k\ge1$.

The matrix $Z\otimes J_{2k}$ is completely positive if and only if $Z$ is. When these matrices are completely positive, their cp-ranks are equal.
\end{theorem}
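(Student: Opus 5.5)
The plan is to factor $Z=B^\top B$ with $B\in\mathbb R^{d\times r}$ of full row rank $d$, and then observe that $Z\otimes J_{2k}=Y^\top Y$ with $Y=B^{(2k\one)}$. With a suitable ordering of coordinates, the columns of $Y$ are the columns $b_t$ of $B$, each repeated $2k$ times consecutively, so the Kronecker product $Z\otimes J_{2k}$ is exactly the Gram matrix of $Y$.

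\textbf{Exactness and dual rank.} Inner products of columns of $Y$ are entries of $Z$, hence nonnegative. Each column $b_t$ is nonzero because $Z_{tt}>0$. Every vertex in block $t$ has $2k-1\ge1$ copies joined to it with weight $4Z_{tt}>0$, so the support graph has no isolated vertices. A sign vector balanced within every block (half $+1$, half $-1$) lies in $\ker Y$, so the feasible count set contains $s=k\one$. Corollary~\ref{cor:construction} then gives exactness and $Z^*=Y^\top Y=Z\otimes J_{2k}$. Since $\rank(Z\otimes J_{2k})=\rank Z\cdot\rank J_{2k}=d$, we get $d(W)=d$.

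\textbf{Spanning.} Corollary~\ref{cor:construction} gives $c(W)=\sigma(Y)$, and Theorem~\ref{thm:replication} gives $\sigma(Y)=n-r+\dim L_{k\one}(B)$. Since $\ker B=\ker Z$ (as $\ker(B^\top B)=\ker B$), we have $L_{k\one}(B)=L_{k\one}(Z)$. By the definition of $\kappa$, $L_{\kappa\one}(Z)=\ker Z$. Monotonicity in $k$ is clear, because the defining box only grows, so $L_{k\one}(Z)=\ker Z$ for all $k\ge\kappa(Z)$. Therefore $c(W)=n-r+(r-d)=n-d$. Corollary~\ref{cor:rational} guarantees that $\kappa(Z)$ exists, and it depends only on $\ker Z$ by definition.

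\textbf{Connectivity.} Suppose the off-diagonal support of $Z$ is connected. Each block is a clique, since $Z_{tt}>0$. Whenever $Z_{tu}>0$, every vertex of block $t$ is adjacent to every vertex of block $u$. Hence the support graph of $W$ is connected.

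\textbf{Complete positivity, the main point needing care.} If $Z=N^\top N$ with $N\ge0$ having $p$ rows, then $Z\otimes J_{2k}=(N\otimes\one_{2k}^\top)^\top(N\otimes\one_{2k}^\top)$ is a nonnegative factorization with $p$ rows. Thus cp-rank does not increase under replication. Conversely, $Z$ is the principal submatrix of $Z\otimes J_{2k}$ obtained by taking one index per block. A principal submatrix of a completely positive matrix is completely positive: restricting the columns of a nonnegative factor $M$ to those indices gives a nonnegative factor with the same number of rows. So cp-rank does not increase in this direction either. The two inequalities give equality of the cp-ranks and the equivalence of complete positivity. I expect no real obstacle here; the only subtle step is checking that $L_{k\one}$ is monotone and depends only on the kernel, which is immediate from the definition.
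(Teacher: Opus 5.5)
Your proposal is correct and follows the paper's proof essentially step for step. Both arguments use the Gram factor $Z=B^\top B$ and the replicated factor $Y=B^{(2k\one)}$, with balanced signs certifying exactness through Corollary~\ref{cor:construction}. Both then apply Theorem~\ref{thm:replication} together with $\ker B=\ker Z$ to get spanning, and settle complete positivity and cp-rank by repeating the columns of a nonnegative factor in one direction and restricting to a principal submatrix in the other. You also state explicitly that $L_{k\one}$ grows with $k$, so spanning at $\kappa(Z)$ persists for all larger $k$; the paper uses this only implicitly.
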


\begin{proof}
Choose a Gram factor $Z=B^\top B$ with $B\in\mathbb R^{d\times r}$, and set $Y=B^{(2k\one)}$. Then
\[
Y^\top Y=Z\otimes J_{2k},
\qquad
\ker B=\ker Z.
\]
The columns of $B$ are nonzero because $Z$ has positive diagonal, and their pairwise inner products are nonnegative because $Z$ is entrywise nonnegative. Within the replication block corresponding to $b_t$, every edge has weight $4Z_{tt}>0$. Each block has at least two vertices, so the support graph has no isolated vertices. Each nonzero off-diagonal entry of $Z$ joins all vertices in the corresponding pair of blocks; hence connected support of $Z$ gives connected support of $W$.

For every $k\ge1$, choosing equally many positive and negative signs within each block gives a sign vector in $\ker Y$. Corollary~\ref{cor:construction} therefore proves exactness and gives
\[
Z^*=Y^\top Y,
\qquad
c(W)=\sigma(Y).
\]
Also,
\[
d(W)=\rank(Z\otimes J_{2k})=\rank Z=d
\]
for every $k\ge1$. Since $\ker B=\ker Z$ is rational, $\kappa(B)=\kappa(Z)$ is finite. For every $k\ge\kappa(Z)$, the definition of this threshold and Theorem~\ref{thm:replication} give
\[
c(W)=\sigma(Y)=n-\rank B=n-d.
\]

If $Z=N^\top N$ with $N\ge0$, repeating each column of $N$ exactly $2k$ times gives a nonnegative factorization of $Z\otimes J_{2k}$ with the same number of rows. Conversely, selecting one index from each replication block recovers $Z$ as a principal submatrix. Restricting any nonnegative factor to those indices gives a nonnegative factor of $Z$ with no additional rows. These two operations prove both the equivalence of complete positivity and equality of cp-ranks.
\end{proof}

The construction differs from the vertex splitting in \cite[Theorem~9 and Lemma~10]{Bhardwaj2X}. In that operation, copies of a vertex are mutually nonadjacent, original edge weights are distributed among pairs of copies, and primal optima lift with unchanged rank. Here copies of index $t$ form a clique with edge weight $4Z_{tt}$, while edges between copies of distinct indices $t,u$ have weight $4Z_{tu}$. The preserved rank is the dual rank, and the replication criterion controls whether optimal cut vectors span the resulting kernel.

Every rational matrix has a rational kernel, so Theorem~\ref{thm:realization} applies in particular to every rational doubly nonnegative matrix with positive diagonal. The proof only needs rationality of the kernel, however, and uses that assumption only for full spanning. The following example shows why exactness alone does not require it.

\begin{remark}\label{rem:irrational}
Consider
\[
Z=
\begin{pmatrix}
1&\sqrt2\\
\sqrt2&2
\end{pmatrix}
=B^\top B,
\qquad
B=(1,\sqrt2).
\]
The matrix $Z$ is doubly nonnegative of rank one with positive diagonal. Its kernel is spanned by $(\sqrt2,-1)$ and contains no nonzero integer vector. Therefore $L_k(B)=\{0\}$ for every $k\in\mathbb Z_{>0}^2$.

For uniform replication, $Y=B^{(2k\one)}$ has $n=4k$ columns, and Theorem~\ref{thm:replication} gives
\[
\sigma(Y)=n-2<n-1=\dim\ker Y.
\]
Balanced signs within each block still certify exactness of the associated Max-Cut instance. Thus every positive even replication gives an exact instance, but none gives a sign-spanned dual kernel. Corollary~\ref{cor:rational} rules out full spanning under any positive replication of this factor, and Corollary~\ref{cor:deficit} accounts for the persistent deficit of one under even replication.
\end{remark}

The realization preserves obstructions to nonnegative Gram factors. Applying it to a suitable family of matrices gives connected non-completely-positive certificates at every rank at least three.

\begin{corollary}\label{cor:nonormalform}
For every $d\ge3$, there exists a connected nonnegatively weighted exact Max-Cut instance whose optimal dual slack has rank $d$, is not completely positive, and has a kernel spanned by optimal cut sign vectors.
\end{corollary}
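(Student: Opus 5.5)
The plan is to apply Theorem~\ref{thm:realization} to a suitable seed matrix $Z$. That theorem already delivers everything the corollary asks for, provided $Z$ satisfies five conditions: it is rational, doubly nonnegative, not completely positive, has positive diagonal and rank exactly $d$, and has connected off-diagonal support. Rationality gives a rational kernel. The realization then has dual rank $d$, is connected, and is sign-spanned for $k\ge\kappa(Z)$. It is also not completely positive, since $Z\otimes J_{2k}$ is completely positive exactly when $Z$ is. So the work reduces to producing such a $Z$ for every $d\ge3$.

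For $d=3$, the standard choice is a rational doubly nonnegative, non-completely-positive matrix of rank $3$. Every doubly nonnegative matrix of order at most $4$ is completely positive, so the seed must have order at least $5$. The classical witnesses come from the Horn matrix $H$, which is copositive but neither positive semidefinite nor a sum of a positive semidefinite and a nonnegative matrix. Any doubly nonnegative $Z$ with $\langle H,Z\rangle<0$ is not completely positive, because $\langle H,N^\top N\rangle=\sum_k n_k^\top H n_k\ge0$ for $N\ge0$.

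I would take an explicit rank-$3$ example of this kind, for instance a $5\times5$ circulant supported on the $5$-cycle pattern. The fourteen-vertex instance mentioned in Example~\ref{ex:horn14} suggests a $7\times7$ seed with $k=1$, or a smaller seed replicated. I would check rationality, positive diagonal, positive semidefiniteness with rank $3$, and $\langle H,Z\rangle<0$ by direct computation. Connectivity holds because the cycle support is connected.

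For general $d\ge3$, I would raise the rank while keeping the certificate. Take $Z_d=Z_3\oplus I_{d-3}$ (rank $d$, still not completely positive, since a principal submatrix is $Z_3$). Then restore connectivity by adding a small rational rank-preserving perturbation. A convenient version is
\[
Z_d=Z_3\oplus 0+\varepsilon\,vv^\top+\sum_j e_je_j^\top,
\]
with a nonnegative rational vector $v$ supported across both parts. Alternatively, and more cleanly, use the bordered form $Z_d=\begin{pmatrix}Z_3&0\\0&0\end{pmatrix}+P$. Here $P$ is a rational completely positive matrix of rank $d-3$, chosen with range complementary to $\range Z_3$ and with positive entries linking the two blocks, so that the ranks add. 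Non-complete-positivity survives because $\langle \tilde H,Z_d\rangle$ stays negative for small $\varepsilon$. Here $\tilde H$ is $H$ padded by zeros, which is still copositive.

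The main obstacle is this last step: ensuring simultaneously that the rank is exactly $d$, that the matrix is rational, that the support is connected, and that $\langle\tilde H,Z_d\rangle<0$. I would first try scaling the connecting perturbation by a small rational $\varepsilon$, which preserves the strict inequality by continuity. Exact rank additivity would then come from picking the perturbation's range in $\ker Z_3$ extended by new coordinates.
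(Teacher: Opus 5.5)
Your reduction to Theorem~\ref{thm:realization} is the paper's strategy, and your list of seed properties is right. The gap is that you never produce a seed, and the one you propose for $d=3$ cannot work. A circulant with first row $(a,b,0,0,b)$, $b>0$, has eigenvalues $a+2b$, $a+2b\cos(2\pi/5)$ and $a+2b\cos(4\pi/5)$, the last two each doubled. Positive semidefiniteness with rank three therefore forces $a/b=(1+\sqrt5)/2$, so the matrix is never rational. Its kernel is $\{x\in\mathbb R^5:\sum_tx_t=0,\ \sum_tx_t\omega^t=0\}$ with $\omega=e^{2\pi i/5}$. This contains no nonzero rational vector, because $1+z+z^2+z^3+z^4$ is irreducible over $\mathbb Q$. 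By Corollary~\ref{cor:rational}, no replication of this seed is sign-spanned. A suitable seed is already in Example~\ref{ex:horn14}: the non-circulant integer matrix $Z_0=B^\top B$, with diagonal $(2,2,3,3,2)$, cycle weights $1,2,1,2,1$, and $\langle H,Z_0\rangle=-2$. This is what the paper uses for $d=3$. Note also that the fourteen vertices in that example come from this $5\times5$ seed with multiplicities $(4,4,3,1,2)$, not from a $7\times7$ seed.

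You flag the rank-raising step as the main obstacle, and you leave it unresolved. Your first variant, $(Z_3\oplus I_{d-3})+\varepsilon vv^\top$, keeps rank $d$ only if the old-block part of $v$ lies in $\range Z_3$; otherwise the rank becomes $d+1$. Your closing idea, placing the perturbation's range in $\ker Z_3$ extended by new coordinates, is impossible. If $P\succeq0$ is entrywise nonnegative and its old block $P_{11}$ has range in $\ker Z_3$, then $0=\langle Z_3,P_{11}\rangle\ge\sum_i(Z_3)_{ii}(P_{11})_{ii}$. Hence $P_{11}=0$, and $P$ has no entries linking the blocks. The bordered form can instead be repaired with generators $(\varepsilon q_k;e_k)$, $q_k\ge0$ nonzero; their independent new parts make $\range P$ meet $\range Z_3\oplus0$ trivially.

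The paper's device is to use a column of the seed. The vector $w=Z_0e_1=(2,1,0,0,1)^\top$ is nonnegative and lies in $\range Z_0$. Set $A_d=Z_0\oplus I_{d-3}$ and $u=(w;\one_{d-3})=A_d(e_1;\one)$. Then $\widetilde Z_d=A_d+uu^\top$ satisfies $\ker\widetilde Z_d=\ker A_d$ and has rank $d$. It joins every new index to $1,2,5$. Since $Hw=(0,0,2,2,0)^\top$ gives $w^\top Hw=0$, it satisfies $\langle H\oplus0,\widetilde Z_d\rangle=-2$ exactly, with no small parameter.
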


\begin{proof}
Let $Z_0$ be the rational rank-three matrix in Example~\ref{ex:horn14}, and let $H=J_5-2A(C_5)$ be its copositive Horn witness. The example verifies that $Z_0$ is doubly nonnegative with positive diagonal and connected support, and that $\langle H,Z_0\rangle=-2$. For $d=3$, apply Theorem~\ref{thm:realization} directly to $Z_0$.

For $d>3$, put
\[
A_d=Z_0\oplus I_{d-3},\qquad
w=Z_0e_1=(2,1,0,0,1)^\top,\qquad
u=\begin{pmatrix}w\\\one_{d-3}\end{pmatrix},
\]
and define $\widetilde Z_d=A_d+uu^\top$. This matrix is rational and doubly nonnegative with positive diagonal. Since $u=A_d(e_1^\top,\one_{d-3}^\top)^\top$ belongs to $\range A_d$, positivity gives
\[
\ker\widetilde Z_d=\ker A_d\cap u^\perp=\ker A_d,
\qquad \rank\widetilde Z_d=d.
\]
The update joins every new index to indices $1,2,5$ of the original connected support, so the support of $\widetilde Z_d$ is connected. Moreover,
\[
Hw=(0,0,2,2,0)^\top,\qquad w^\top Hw=0,
\]
and therefore
\[
\langle H\oplus0,\widetilde Z_d\rangle
=\langle H,Z_0\rangle+w^\top Hw=-2.
\]
The matrix $H\oplus0$ is copositive, so $\widetilde Z_d$ is not completely positive. Theorem~\ref{thm:realization} now gives a connected sign-spanned exact instance of dual rank $d$.
\end{proof}

The final example makes the rank-three obstruction explicit. It also shows that allowing unequal replication sizes can substantially reduce the number of vertices needed for full spanning.

\begin{example}[A fourteen-vertex non-completely-positive certificate]\label{ex:horn14}
Let
\[
B=
\begin{pmatrix}
1&0&-1&-1&0\\
0&1&1&-1&-1\\
1&1&1&1&1
\end{pmatrix},
\qquad
Z=B^\top B=
\begin{pmatrix}
2&1&0&0&1\\
1&2&2&0&0\\
0&2&3&1&0\\
0&0&1&3&2\\
1&0&0&2&2
\end{pmatrix}.
\]
The matrix $Z$ is doubly nonnegative of rank three, and its off-diagonal support is the cycle $1,2,3,4,5,1$. To certify that it is not completely positive, use the classical copositive Horn matrix
\[
H=J_5-2A(C_5);
\]
see \cite{berman2003completely}. Direct calculation gives
\[
\langle H,Z\rangle
=
12-2(1+2+1+2+1)
=
-2.
\]
Every completely positive matrix has nonnegative inner product with every copositive matrix: if $Z=N^\top N$ with nonnegative rows $n_j^\top$, then $\langle H,Z\rangle=\sum_j n_j^\top Hn_j\ge0$. The negative value therefore excludes complete positivity.

Repeat the columns of $B$ with multiplicities
\[
m=(4,4,3,1,2),
\]
and write $Y=B^{(m)}$. Define $W_{ij}=4\langle y_i,y_j\rangle$ for $i\ne j$, with zero diagonal. This gives a graph on fourteen vertices with nonnegative integer weights and connected support.

To determine its optimal cuts, it suffices to find the attainable block sums in $\ker B$. Every integer vector in this kernel has the form
\[
c=(-2v-2t,\ 4v+3t,\ -3v-2t,\ v,\ t),
\qquad v,t\in\mathbb Z.
\]
A block sum is attainable by signs precisely when
\[
|c_i|\le m_i,
\qquad
c_i\equiv m_i\pmod2
\]
for every $i$. Imposing these bounds and parity conditions gives exactly
\[
\pm a,\quad\pm b,
\qquad
a=(2,-2,1,1,-2),
\qquad
b=(2,-4,3,-1,0).
\]
In particular, the zero block-sum vector is unavailable because two blocks have odd size.

The vectors $a,b$ are independent. Blocks $1,2,3,5$ are active, while block $4$ has size one and requires no within-block contribution. The multiplicity formula gives
\[
c(W)=\sigma(Y)
=
(4-1)+(4-1)+(3-1)+(2-1)+2
=
11.
\]
The attainable block sums give kernel sign vectors, so Corollary~\ref{cor:construction} certifies exactness. Its optimal slack is $Y^\top Y$, of rank three, and hence its kernel has dimension $14-3=11$. The instance is therefore sign-spanned. Its slack is not completely positive because it contains $Z$ as a principal submatrix.

The same count patterns enumerate all optimal cuts. For a fixed attainable block sum $c$, the number of sign assignments is
\[
\prod_{i=1}^5\binom{m_i}{(m_i+c_i)/2}.
\]
This is $48$ for each of $a,-a$ and $8$ for each of $b,-b$. There are therefore exactly $112$ optimal sign vectors, or $56$ optimal cuts up to global sign. The dual certificate gives
\[
\MC(W)=\SDP(W)=m^\top Zm=\|Bm\|^2=212.
\]

Uniform even replication requires more vertices. Indeed,
\[
\ker B\cap\{-1,0,1\}^5
=
\{0,\pm(0,1,-1,1,-1)\},
\]
so integer kernel vectors of infinity norm at most one span only a line. The additional vector $a=(2,-2,1,1,-2)$ has infinity norm two and is independent of that line. Hence $\kappa(B)=2$, and uniform even replication first achieves full spanning with four copies of each column, on twenty vertices.

For this factor, fourteen vertices is also the minimum under arbitrary positive replication. An exhaustive check of all $m\in\mathbb Z_{>0}^5$ with $\sum_i m_i\le14$ finds no sign-spanned kernel below fourteen vertices and exactly three successful multiplicity vectors at fourteen:
\[
(4,2,1,3,4),\qquad
(4,3,2,2,3),\qquad
(4,4,3,1,2).
\]
The check uses the displayed kernel parametrization, the block-sum bounds and parity conditions, and Theorem~\ref{thm:multiplicity}. The supplementary script \texttt{verify\_\allowbreak replication.py} performs this enumeration and independently verifies the displayed instance by enumerating cuts. This minimum concerns positive replications of this particular factor $B$; it is not a minimum over all rank-three non-completely-positive exact instances.
\end{example}

\section{Optimal faces and their cut hulls}
\label{sec:faces}

Sign-spanning ensures that optimal cut vectors generate every direction in the dual kernel. It also supplies an optimal matrix whose range fills that kernel, by averaging suitable cut matrices. A further question remains: can every optimal matrix be obtained as a convex combination of optimal cut matrices? Equivalently, when does
\[
F^*(W)=\conv\{xx^\top:x\in\mathcal C(W)\}?
\]
The inclusion from right to left always holds for an exact instance. Equality is more restrictive, because it concerns the convex geometry of matrices rather than the linear span of their generating vectors.

We begin with complete graphs, where the distinction is already visible between $K_4$ and $K_6$. We then express the optimal face in coordinates on the dual kernel and apply the classical face-dimension and Schur-independence results. These coordinates explain both the examples and the conditions under which the cut hull exhausts the optimal face.

For a unit-weight complete graph of even order, every balanced cut is optimal. Its certificate has rank one, so the dual kernel is particularly simple.

\begin{proposition}\label{prop:completeface}
For the unit-weight complete graph $K_n$ with even $n$,
\[
Z^*=\tfrac14J_n,
\qquad
\MC(W)=\SDP(W)=\frac{n^2}{4},
\qquad
c(W)=n-1.
\]
For even $n\ge4$,
\[
F^*(W)=\{X\in\Ell:X\one=0\},
\qquad
\dim F^*(W)=\frac{n(n-3)}2.
\]
For $K_2$, the optimal face is a single cut matrix.
\end{proposition}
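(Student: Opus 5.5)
We obtain the first three claims by viewing the unit-weight $K_n$ as a replication instance and then applying the earlier results. Take $B=(1/2)\in\mathbb R^{1\times1}$, so $Z=B^\top B=\tfrac14$ is doubly nonnegative with positive diagonal and trivially rational kernel $\{0\}$. With $n=2k$, the construction of Theorem~\ref{thm:realization} yields $W_{ij}=4\cdot\tfrac14=1$ for $i\ne j$. This is exactly the unit-weight $K_n$. The theorem then gives exactness and $Z^*=\tfrac14J_n$. Since $\kappa(Z)=1$ by Proposition~\ref{prop:replicationbound}, it also gives $c(W)=n-\rank Z^*=n-1$.

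As a direct cross-check, note that $Z_x=\Diag(\tfrac12W(x))-\tfrac14L_W$ for a balanced $x$ satisfies $W_i(x)=n/2$. Hence $Z_x=\tfrac n4I-\tfrac14(nI-J)=\tfrac14J\succeq0$, and Proposition~\ref{prop:certificate} applies. The value is $\MC(W)=\cut_W(x)=(n/2)^2=n^2/4$, and this equals $\SDP(W)$ by exactness.

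Next, the face description follows from \eqref{eq:face}: $Z^*X=0$ holds if and only if $JX=0$, that is, $X\one=0$. So $F^*(W)=\{X\in\Ell:X\one=0\}$.

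For the dimension, we work in kernel coordinates. Fix an orthonormal basis $U\in\mathbb R^{n\times(n-1)}$ of $\one^\perp$, so every face element is $X=UGU^\top$ with $G\succeq0$. The diagonal constraint becomes the $n$ linear conditions $\diag(UGU^\top)=\one$ on $G\in\mathbb S^{n-1}$. These conditions are not independent: summing them gives $\operatorname{tr}G=n$, and this sum is already one condition. We need to show that the linear map $G\mapsto\diag(UGU^\top)$, from $\mathbb S^{n-1}$ to the hyperplane $\{y:\one^\top y=\operatorname{tr}G\}$, has rank exactly $n$. Equivalently, we need the Schur products $u_iu_j$ (with $u_i$ the rows of $U$) to span a space of dimension $n$.

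To prove that rank claim, use $UU^\top=I-\tfrac1nJ$. The image of the map contains $\diag(UE_{ab}U^\top)$, and the image of all of $\mathbb S^{n-1}$ is $\{\diag(PMP):M\in\mathbb S^n\}$ with $P=I-J/n$. Choosing $M=e_ie_i^\top$ gives the vector with entries $(\delta_{ij}-1/n)^2$. For $n\ge3$ these $n$ vectors are independent, since they form $aI+bJ$ with $a=1-2/n\ne0$ and $a+nb\ne0$.

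The affine dimension then equals $\dim\mathbb S^{n-1}-n=\tfrac{(n-1)n}2-n=\tfrac{n(n-3)}2$, provided the face contains a point $G$ in the relative interior, i.e.\ $G\succ0$. The averaged matrix $\overline X$ from Proposition~\ref{prop:face} gives this, because its range is the whole kernel $\one^\perp$. Affine dimension at a positive definite interior point equals the dimension of the affine solution space, which is the count above.

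The main obstacle is this Schur-independence and relative-interior step. The rest is bookkeeping from earlier results.

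Finally, for $K_2$ the kernel of $\tfrac14J_2$ is spanned by $(1,-1)$. Any $X$ in the face is therefore $\lambda(1,-1)(1,-1)^\top$, and the diagonal constraint forces $\lambda=1$. So the face is the single cut matrix.
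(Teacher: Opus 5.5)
Your proof is correct, but both halves take a different route from the paper.

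For exactness, $Z^*$ and $c(W)$, the paper argues directly. Since $L_W=nI-J_n$, the choice $s_i=n/4$ gives the slack $\tfrac14J_n$ with value $n^2/4$. Every balanced cut attains this value, and exchanging opposite signs at positions $i,j$ gives differences $2(e_i-e_j)$, which span $\one^\perp$. You instead recognize the unit-weight $K_n$ as the realization instance of the $1\times1$ matrix $Z=\tfrac14$, whose kernel is zero, hence rational with $\kappa=1$. This is valid and not circular, since nothing in Section~\ref{sec:dimension} depends on this proposition, and it nicely exhibits $K_n$ as the base case of Theorem~\ref{thm:realization}. However, the swap argument is exactly the within-block step inside Theorem~\ref{thm:replication}, so the general machinery buys nothing extra here. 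The value $n^2/4$ still has to come from your separate balanced-cut computation.

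For the dimension, the paper fixes the unit diagonal and uses the off-diagonal entries as coordinates. It shows that the $n$ equations $X\one=0$ are independent, because a dependency would force $a_i+a_j=0$ for all $i\ne j$. It then uses the explicit interior point $X_0=\frac n{n-1}(I-\frac1nJ_n)$. You work in kernel coordinates $X=UGU^\top$. You prove that $G\mapsto\diag(UGU^\top)$ is onto $\mathbb R^n$ via $\{UGU^\top:G\in\mathbb S^{n-1}\}=\{PMP:M\in\mathbb S^n\}$ and the nonsingular matrix $(1-\frac2n)I+\frac1{n^2}J_n$. You take the interior point from the averaged cut matrix $\overline X$. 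This is the Li--Tam computation of \eqref{eq:URU}--\eqref{eq:facedim}, which the paper develops only later. It yields $\dim P=n$ directly, whereas the paper deduces $\dim P=n$ afterwards from this proposition. The paper's version is shorter and needs no basis of the kernel.

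There is one expository slip to fix. Your sentence saying the $n$ diagonal conditions are ``not independent'' contradicts the rank-$n$ claim you then prove and use, and so does the $G$-dependent codomain $\{y:\one^\top y=\operatorname{tr}G\}$; remove both. Also, the equivalent spanning condition should concern the matrices $u_iu_i^\top$ for the rows $u_i$ of $U$, or the entrywise products of its columns, not products $u_iu_j$ of rows.
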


\begin{proof}
Since $L_W=nI-J_n$, taking $s_i=n/4$ gives the dual slack $J_n/4$ and objective value $n^2/4$. Every balanced cut attains this value. The balanced sign vectors span $\one^\perp$: exchanging opposite signs at positions $i,j$ gives a difference $2(e_i-e_j)$, and these differences span $\one^\perp$. Thus $c(W)=n-1$, and Proposition~\ref{prop:face} gives the stated description of the optimal face.

For the dimension calculation, fix the diagonal entries at one and regard the $n(n-1)/2$ off-diagonal entries as coordinates. The equations $X\one=0$ impose $n$ independent affine constraints when $n\ge3$. Indeed, a linear dependence among their linear parts, with coefficients $a_i$, would require
\[
a_i+a_j=0\qquad(i\ne j),
\]
which forces $a=0$. The resulting affine space therefore has dimension
\[
\frac{n(n-1)}2-n=\frac{n(n-3)}2.
\]
The matrix
\[
X_0=\frac{n}{n-1}\left(I-\frac1nJ_n\right)
\]
is feasible and positive definite on $\one^\perp$. Hence the positive semidefinite constraint imposes no further affine equalities, proving the dimension formula.

For $n=2$, the equations $\diag X=\one$ and $X\one=0$ determine the single matrix
\[
\begin{pmatrix}1&-1\\-1&1\end{pmatrix}.\qedhere
\]
\end{proof}

Thus $K_4$ has cut-span dimension three and optimal-face dimension two, while the corresponding dimensions for $K_6$ are five and nine. The next two examples show that their faces differ in a more substantial way: the first is generated entirely by cuts, whereas the second has a higher-rank extreme point outside the cut hull.

\begin{example}\label{ex:k4triangle}
The three balanced cuts of $K_4$, up to global sign, are represented by
\[
x^{(1)}=(1,1,-1,-1)^\top,\qquad
x^{(2)}=(1,-1,1,-1)^\top,\qquad
x^{(3)}=(1,-1,-1,1)^\top.
\]
Let $U$ have these vectors as columns. They form a basis of $\one^\perp$, so every $X\in F^*(K_4)$ has a unique representation
\[
X=URU^\top,\qquad R\succeq0.
\]
The four equations $\diag(URU^\top)=\one$ force
\[
R_{12}=R_{13}=R_{23}=0,
\qquad
R_{11}+R_{22}+R_{33}=1.
\]
Consequently,
\[
F^*(K_4)
=
\left\{
\sum_{j=1}^3\lambda_jx^{(j)}(x^{(j)})^\top:
\lambda_j\ge0,\ \sum_{j=1}^3\lambda_j=1
\right\}.
\]
The three cut matrices are affinely independent, so the face is a triangle. Since $U$ has full column rank, the rank of a point in this triangle is the number of positive coefficients $\lambda_j$. Its vertices have rank one, the relative interiors of its edges have rank two, and its relative interior has rank three; see Figure~\ref{fig:k4-optimal-face}.

The entrywise products of the basis vectors satisfy
\[
x^{(1)}\circ x^{(2)}=x^{(3)},\qquad
x^{(1)}\circ x^{(3)}=x^{(2)},\qquad
x^{(2)}\circ x^{(3)}=x^{(1)}.
\]
Together with $\one$, these vectors form an orthogonal basis of $\mathbb R^4$, corresponding to a Hadamard matrix of order four. This independence is what forced the off-diagonal entries of $R$ to vanish. Theorem~\ref{thm:hull} will identify the same condition in general.
\end{example}

\begin{figure}[htbp]
\centering
\begin{tikzpicture}[
line join=round,
every node/.style={font=\small},
dot/.style={circle,inner sep=0pt,minimum size=5pt},
scale=0.9
]
\coordinate (A) at (-3,-1);
\coordinate (B) at (3,-1);
\coordinate (C) at (0,3.5);
\coordinate (M) at (0,-1);
\coordinate (O) at (0,0.5);

\fill[cutblue!8] (A)--(B)--(C)--cycle;
\draw[cutblue,line width=0.9pt] (A)--(B)--(C)--cycle;

\node[dot,fill=cutblue] at (A) {};
\node[dot,fill=cutblue] at (B) {};
\node[dot,fill=cutblue] at (C) {};
\node[anchor=north east,xshift=-1pt,yshift=-1pt] at (A) {$X_1$};
\node[anchor=north west,xshift=1pt,yshift=-1pt] at (B) {$X_2$};
\node[anchor=south,yshift=4pt] at (C) {$X_3$};

\node[dot,fill=cutteal] at (O) {};
\node[anchor=north,yshift=-5pt,align=center] at (O)
    {$\overline X=\tfrac13(X_1+X_2+X_3)$\\[1pt]
     {\footnotesize\color{black!55}rank $3$}};

\node[dot,fill=cutorange] at (M) {};
\node[anchor=north,yshift=-5pt,align=center] at (M)
    {$\tfrac12(X_1+X_2)$\\[1pt]
     {\footnotesize\color{black!55}rank $2$}};

\node[anchor=north,align=center] at (0,-2.5)
    {$X_i=x^{(i)}(x^{(i)})^\top$,\\[3pt]
     $x^{(1)}=(1,1,-1,-1)^\top$,\\[2pt]
     $x^{(2)}=(1,-1,1,-1)^\top$,\\[2pt]
     $x^{(3)}=(1,-1,-1,1)^\top$.};
\end{tikzpicture}
\caption{\small The optimal face of the unit-weight $K_4$ is the triangle generated by its three optimal cut matrices. Its affine dimension is two, while the optimal cut vectors span a three-dimensional space. The barycentre is $\overline X=\frac43(I-\frac14J_4)$ and has rank three. Every point in the face is a convex combination of optimal cut matrices.}
\label{fig:k4-optimal-face}
\end{figure}

\begin{example}\label{ex:notconvex}
For $K_6$, choose unit vectors $a,b,c\in\mathbb R^2$ satisfying $a+b+c=0$, and let $X$ be the Gram matrix of
\[
(a,a,b,b,c,c).
\]
The six vectors sum to zero, so $X\one=0$ and $X\in F^*(K_6)$. Their pairwise inner products are
\[
\langle a,b\rangle=\langle a,c\rangle=\langle b,c\rangle=-\tfrac12.
\]
Consequently,
\[
X_{13}+X_{15}+X_{35}=-\tfrac32.
\]
Every cut matrix satisfies the triangle inequality
\[
x_1x_3+x_1x_5+x_3x_5\ge-1:
\]
the left-hand side is $3$ if the three signs agree and $-1$ otherwise. The inequality is preserved by convex combinations. Thus $X$ lies outside the convex hull of all cut matrices, and in particular outside the hull of the optimal ones; see Figure~\ref{fig:k6-noncut-optimum}.

This matrix is also an extreme point of the optimal face. Write
\[
V=(a,a,b,b,c,c),\qquad X=V^\top V.
\]
Any two-sided positive semidefinite perturbation of $X$ must be supported on $\range X$, so its direction has the form $H=V^\top AV$ with $A\in\mathbb S^2$. The condition $\diag H=0$ gives
\[
a^\top Aa=b^\top Ab=c^\top Ac=0.
\]
The three matrices $aa^\top,bb^\top,cc^\top$ span $\mathbb S^2$, because their directions are separated by $120^\circ$. Hence $A=0$ and $H=0$, proving extremality. Theorem~\ref{thm:hull} will explain why $K_6$ must have an optimum outside its cut hull, whereas $K_4$ does not.
\end{example}

\begin{figure}[htbp]
\centering
\begin{tikzpicture}[
>={Stealth[length=2.2mm,width=1.5mm]},
every node/.style={font=\small},
vec/.style={->,line width=1.1pt},
scale=1.2
]
\begin{scope}[shift={(-3.4,0)}]
\coordinate (O) at (0,0);
\coordinate (Va) at (90:1.45);
\coordinate (Vb) at (210:1.45);
\coordinate (Vc) at (330:1.45);

\draw[gray!30] (O) circle (1.45);
\draw[gray!45,dashed] (Va)--(Vb)--(Vc)--cycle;

\draw[gray!55,line width=0.4pt] (90:0.45) arc (90:210:0.45);
\node[font=\scriptsize,color=black!55] at (150:0.74) {$120^\circ$};

\draw[vec,cutblue] (O)--(Va);
\draw[vec,cutteal] (O)--(Vb);
\draw[vec,cutorange] (O)--(Vc);
\fill (O) circle (1.7pt);

\node[cutblue,anchor=south,yshift=3pt] at (Va) {$v_1=v_2=a$};
\node[cutteal,anchor=north east,xshift=1pt,yshift=-1pt] at (Vb) {$v_3=v_4=b$};
\node[cutorange,anchor=north west,xshift=-1pt,yshift=-1pt] at (Vc) {$v_5=v_6=c$};

\node[anchor=north] at (0,-1.85) {$a+b+c=0$};
\end{scope}

\node[anchor=center,align=center] at (2.4,0.25)
{$X=\bigl(\langle v_i,v_j\rangle\bigr)_{i,j=1}^6$\\[12pt]
$\displaystyle
X[\{1,3,5\}]=
\begin{pmatrix}
1&-\tfrac12&-\tfrac12\\[1pt]
-\tfrac12&1&-\tfrac12\\[1pt]
-\tfrac12&-\tfrac12&1
\end{pmatrix}$\\[20pt]
$X_{13}+X_{15}+X_{35}=-\tfrac32<-1$};

\node[anchor=north] at (-0.5,-2.45)
{$\diag X=\one,\qquad X\succeq0,\qquad
X\one=\mathbf0,\qquad \rank X=2.$};
\end{tikzpicture}
\caption{\small A rank-two extreme optimum for the unit-weight $K_6$. The six unit vectors sum to zero, so their Gram matrix belongs to the optimal face. Its displayed principal submatrix violates a triangle inequality satisfied by every cut matrix. Thus this optimum lies outside the cut hull.}
\label{fig:k6-noncut-optimum}
\end{figure}

The representation used for $K_4$ extends to every exact instance. Let the columns of $U\in\mathbb R^{n\times q}$ form a basis of $\ker Z^*$, where $q=n-d(W)$. Then
\begin{equation}\label{eq:URU}
F^*(W)=\{URU^\top:R\succeq0,\ \diag(URU^\top)=\one\}.
\end{equation}
The map $R\mapsto URU^\top$ is injective, so these are coordinates for the entire optimal face. This is a specialization of the null-space description of spectrahedral faces \cite[Theorem~1]{RamanaGoldman95}. In general, the kernel of a primal matrix is constant on the relative interior of a face, and every face of a spectrahedron is exposed \cite[Corollary~1]{RamanaGoldman95}.

In these coordinates, the dimension calculation has one essential qualification. If there is a feasible $R\succ0$, then positivity imposes no further affine equalities, and
\[
\dim F^*(W)
=
\frac{q(q+1)}2
-
\rank\bigl(R\mapsto\diag(URU^\top)\bigr).
\]
For the elliptope, this is the face-dimension formula of Li and Tam; see \cite[Theorem~31.5.3 and Corollary~31.5.4]{de1997geometry}. The hypothesis holds when the instance is sign-spanned, by Proposition~\ref{prop:face}. Without it, all feasible matrices may be supported on a proper subspace of the dual kernel, and counting only the displayed linear constraints can overestimate the dimension.

The same qualification matters when considering perturbations. At a point positive definite on the dual kernel, the facial directions are precisely the matrices $UHU^\top$ with zero diagonal. These matrices act as zero on the orthogonal complement of the dual kernel. Merely requiring the block on that orthogonal complement to vanish is insufficient, because cross terms may destroy positivity. For example,
\[
\begin{pmatrix}1&0\\0&0\end{pmatrix}
+
\varepsilon
\begin{pmatrix}0&1\\1&0\end{pmatrix}
\]
has determinant $-\varepsilon^2$ and is not positive semidefinite for any nonzero $\varepsilon$.

We can now verify the remaining face dimensions in Table~\ref{tab:invariants}. For $v\in\mathbb R^n$ with every coordinate nonzero, suppose that $n\ge3$ and $F_v$ contains a matrix of rank $n-1$. Then
\[
\dim F_v=\frac{n(n-3)}2.
\]
Indeed, the symmetric matrices supported on $v^\perp$ form a space of dimension $n(n-1)/2$. The $n$ diagonal constraints are independent on this space. To prove this, suppose that
\[
\sum_i a_iX_{ii}=0
\]
for every symmetric matrix $X$ supported on $v^\perp$. Taking $X=zz^\top$ with $z=v_je_i-v_ie_j$ gives
\[
a_iv_j^2+a_jv_i^2=0\qquad(i\ne j).
\]
Thus $a_i/v_i^2+a_j/v_j^2=0$ for every pair of distinct indices. Since $n\ge3$, all $a_i$ vanish. The assumed rank-$n-1$ feasible matrix is positive definite on $v^\perp$, so the dimension follows.

For $v=(1,1,2,2)^\top$, the matrix
\[
X=
\begin{pmatrix}
1&0&-\frac14&-\frac14\\
0&1&-\frac14&-\frac14\\
-\frac14&-\frac14&1&-\frac34\\
-\frac14&-\frac14&-\frac34&1
\end{pmatrix}
\]
satisfies $Xv=0$ and has eigenvalues $0,1,\frac54,\frac74$. Therefore $\dim F_v=2$. This also verifies strict complementarity for the corresponding instance: an optimum has rank three, although its optimal cut sign vectors span only a two-dimensional space.

For $v=(1,1,1,1,1,3)^\top$, the five sign vectors
\[
x^{(j)}=
\begin{pmatrix}
\mathbf1_5-2e_j\\
-1
\end{pmatrix},
\qquad 1\le j\le5,
\]
belong to $v^\perp$ and are linearly independent. Averaging their rank-one lifts gives a feasible matrix of rank five, so $\dim F_v=9$. These are the optimal signs considered again in Example~\ref{ex:rankoneexamples}.

Finally, let $v=(1,1,1,3)^\top$ and write an arbitrary $X\in F_v$ as the Gram matrix of unit vectors $u_1,\ldots,u_4$. The condition $Xv=0$ gives
\[
u_1+u_2+u_3+3u_4=0.
\]
Hence $\|u_1+u_2+u_3\|=3$. Equality in the triangle inequality forces
\[
u_1=u_2=u_3=-u_4.
\]
Thus
\[
F_v=\{xx^\top\},
\qquad x=(1,1,1,-1)^\top,
\]
and $\dim F_v=0$. Here the dual kernel has dimension three, but no feasible matrix has that rank. This is precisely the situation in which the positivity hypothesis in the dimension formula cannot be omitted.

\subsection{Cut hull and the optimal face}
\label{sec:hull}

The $K_4$ calculation suggests a general criterion. In coordinates supplied by an optimal cut basis, diagonal matrices $R$ describe convex combinations of the basis cut matrices. The face equals that simplex if the unit-diagonal equations force every off-diagonal entry of $R$ to vanish. The coefficients of those equations are the entrywise products of the basis vectors, so their independence is the relevant condition.

This condition is classical. Laurent and Poljak established structural restrictions on polyhedral faces of the elliptope, including that a polyhedral face generated by cut matrices is a simplex \cite{LP96}; see \cite[Theorem~31.5.8]{de1997geometry}. The independence condition on entrywise products is called Schur independence \cite[Definition~2.1]{KuengTropp21}, and Kueng and Tropp characterize simplicial faces generated by cut matrices through it \cite[Theorem~3.4]{KuengTropp21}. We specialize these results to an optimal cut basis and combine them with the face-dimension formula.

Throughout this subsection, assume that the instance is exact and sign-spanned. Let $q=n-d(W)$, and choose optimal cut vectors $x^{(1)},\ldots,x^{(q)}$ forming a basis of $\ker Z^*$. Let $U$ have these vectors as columns.

For the converse direction of the characterization, we use a simple restriction on polyhedral sections of the positive semidefinite cone. Its proof records why the size of the matrix bounds the number of facets: each facet contributes a linear factor to the determinant polynomial.

\begin{lemma}\label{lem:polyhedral}
Let $L\subseteq\mathbb S^q$ be an affine subspace, and suppose that
\[
\mathcal K=\{R\in L:R\succeq0\}
\]
is compact and contains a positive definite matrix. Then $\mathcal K$ has at most $q$ faces of codimension one. If $\mathcal K$ is a polytope, then
\[
\dim\mathcal K\le q-1,
\]
and equality holds only if $\mathcal K$ is a simplex.
\end{lemma}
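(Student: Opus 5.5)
The plan is to use the determinant polynomial, as the sentence before the lemma suggests. Let $R_0\in\mathcal K$ be positive definite, let $L_0$ be the linear space parallel to $L$, and let $m=\dim L$. Since $R_0$ is an interior point of the PSD cone and lies in $L$, the relative interior of $\mathcal K$ in $L$ is exactly $\{R\in L:R\succ0\}$, so $\dim\mathcal K=m$. Its relative boundary consists of the points of $\mathcal K$ where $p(R)=\det R$ vanishes; $p$ restricted to $L$ is a polynomial of degree at most $q$ that is positive at $R_0$.

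\emph{Step 1 (facets give linear factors of $p$).} Let $G$ be a face of codimension one, spanning the affine hyperplane $H\subset L$. Take $R$ in the relative interior of $G$ and consider the segment from $R_0$ to $R$. Along it, $\det$ is positive before $R$ and zero at $R$. In fact $p$ vanishes on a relatively open subset of $H$ near $R$, since every relative-interior point of $G$ is a boundary point of $\mathcal K$. A polynomial vanishing on a nonempty open subset of a hyperplane vanishes on the whole hyperplane. Hence the affine function $\ell_G$ defining $H$ divides $p|_L$, where divisibility is taken in the polynomial ring of $L$.

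Distinct facets span distinct hyperplanes, so their defining affine functions are pairwise non-proportional, hence coprime irreducibles. Their product therefore divides $p|_L$. Because $p|_L\neq0$ and $\deg p|_L\le q$, there are at most $q$ facets. The main obstacle is making this step rigorous: justifying that $G$ has a nonempty relative interior in $H$ on which $\det$ vanishes, and that vanishing on an open piece of $H$ gives divisibility. Both are standard, from the dimension of a facet and the irreducibility of linear polynomials.

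\emph{Step 2 (polytope case).} Suppose $\mathcal K$ is a polytope of dimension $m$. A compact polytope of dimension $m\ge1$ has at least $m+1$ facets. If $m=0$ there is nothing to prove, and $q\ge1$. Step 1 then gives $m+1\le q$, that is, $\dim\mathcal K\le q-1$. If equality $m=q-1$ holds, the polytope has at most $q=m+1$ facets, and hence exactly $m+1$. A full-dimensional polytope with exactly $m+1$ facets is a simplex: its $m+1$ facet inequalities define a bounded region in $m$ dimensions, and such a region is a simplex. This settles the equality case.

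The compactness hypothesis is what makes the polytope and facet counts standard. The positive definite point ensures both that $p|_L\not\equiv0$ and that the facial boundary coincides with the zero set of $\det$ on $\mathcal K$.
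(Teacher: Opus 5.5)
Your proposal is correct and follows essentially the same route as the paper: restrict $\det$ to $L$, use the positive definite point to identify the relative boundary of $\mathcal K$ with its singular points, so that each facet's hyperplane contributes a distinct linear factor of a nonzero polynomial of degree at most $q$, and then use that a bounded $m$-dimensional polytope has at least $m+1$ facets, with exactly $m+1$ only for a simplex. The two points you flag as the main obstacle are also the ones the paper treats as standard: a facet has nonempty relative interior in its hyperplane, and vanishing on an open piece of a hyperplane forces divisibility by its defining affine function.
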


\begin{proof}
Choose $R_0\succ0$ in $\mathcal K$ and parameterize $L$ as
\[
R(t)=R_0+\sum_{i=1}^D t_iR_i,
\]
where $D=\dim L$. Since $R_0$ is positive definite, $\mathcal K$ contains a neighborhood of $R_0$ relative to $L$, so $\dim\mathcal K=D$.

The polynomial $f(t)=\det R(t)$ is nonzero and has degree at most $q$. Every relative-boundary point of $\mathcal K$ is singular, because a positive definite feasible point is interior relative to $L$. Thus every facet lies in the zero set of $f$. A facet contains a relatively open subset of its supporting hyperplane, so the affine linear polynomial defining that hyperplane divides $f$. Distinct facets have distinct supporting hyperplanes and therefore contribute distinct linear factors. There can be at most $q$ of them. This determinant argument is consistent with the diagonal-block normal form in \cite[Corollary~2.3]{BRS15}.

If $D=0$, the polytope assertion is immediate. If $D\ge1$, a bounded $D$-dimensional polytope has at least $D+1$ facets. Hence $D+1\le q$. Equality $D=q-1$ forces exactly $D+1$ facets, which characterizes a simplex.
\end{proof}

The next theorem combines the classical Li--Tam face-dimension formula and the Laurent--Poljak and Kueng--Tropp cut-face results in the coordinates of an optimal cut basis. We give a self-contained proof to make their relation to $c(W)$ explicit.

\begin{theorem}[Schur independence for an optimal cut basis]\label{thm:hull}
With the notation above, set
\[
P=
\spn\left(
\{\one\}\cup
\{x^{(j)}\circ x^{(k)}:1\le j<k\le q\}
\right).
\]
Then $\dim P$ does not depend on the choice of optimal cut basis, and
\begin{equation}\label{eq:facedim}
\dim F^*(W)
=
\binom{q+1}{2}-\dim P
\ge q-1.
\end{equation}
Moreover, the following are equivalent:
\begin{enumerate}[label=(\alph*)]
\item The $1+\binom q2$ vectors defining $P$ are linearly independent; equivalently, the cut basis is Schur independent. \label{hull:a}
\item $\dim F^*(W)=q-1$. \label{hull:b}
\item $F^*(W)=\conv\{xx^\top:x\in\mathcal C(W)\}$. \label{hull:c}
\item $F^*(W)$ is the simplex with vertices $x^{(j)}(x^{(j)})^\top$, and
\[
\mathcal C(W)=\{\pm x^{(1)},\ldots,\pm x^{(q)}\}.
\]
\label{hull:d}
\end{enumerate}
Each condition implies
\[
1+\binom q2\le n.
\]
\end{theorem}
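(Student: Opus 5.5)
We work in the coordinates \eqref{eq:URU} supplied by the chosen optimal cut basis. Since the instance is sign-spanned, the averaged cut matrix of Proposition~\ref{prop:face} corresponds to the positive definite coordinate matrix $R_0=\frac1q I_q$. The Li--Tam formula recalled above therefore applies and gives
\[
\dim F^*(W)=\tbinom{q+1}{2}-\rank\mathcal L,\qquad \mathcal L(R)=\diag(URU^\top).
\]
Expanding the diagonal entries gives
\[
\diag(URU^\top)=\sum_j R_{jj}\,x^{(j)}\circ x^{(j)}+2\sum_{j<k}R_{jk}\,x^{(j)}\circ x^{(k)}.
\]
Because $x^{(j)}\circ x^{(j)}=\one$, the range of $\mathcal L$ is exactly $P$, which proves \eqref{eq:facedim}. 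Independence from the basis follows because $\range\mathcal L=\{\diag M: M\in\mathbb S^n,\ \range M\subseteq\ker Z^*\}$ depends only on $\ker Z^*$. For the lower bound, note that $\dim P\le 1+\binom q2$, so $\dim F^*(W)\ge\binom{q+1}2-1-\binom q2=q-1$. The same count shows that (a)$\iff$(b), since equality in this chain is exactly linear independence of the $1+\binom q2$ spanning vectors. Finally, (a) implies $1+\binom q2=\dim P\le n$, so the last assertion follows once (a)--(d) are shown equivalent.

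Next we show (a)$\Rightarrow$(d). Suppose $\diag(URU^\top)=\one$. By independence, the coefficient of each $x^{(j)}\circ x^{(k)}$ vanishes, so $R_{jk}=0$ for $j\ne k$, and the coefficient of $\one$ gives $\sum_jR_{jj}=1$. Positive semidefiniteness then reduces to $R_{jj}\ge0$, exactly as in Example~\ref{ex:k4triangle}. Hence $F^*(W)$ is the simplex on the $x^{(j)}(x^{(j)})^\top$; these vertices are affinely independent because $U$ has full column rank. Any optimal cut $x$ gives the extreme point $xx^\top$ of $F^*$, since rank-one points of the elliptope are extreme. It must therefore be a vertex of the simplex, so $x=\pm x^{(j)}$. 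The implication (d)$\Rightarrow$(c) is immediate.

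It remains to show (c)$\Rightarrow$(b), which I expect to be the main obstacle. Under (c), $F^*(W)$ is a polytope. In coordinates it is $\mathcal K=\{R\in L:R\succeq0\}$ with $L=\{R:\mathcal L(R)=\one\}$; this set is compact, being affinely isomorphic to the compact face, and it contains $R_0\succ0$. Lemma~\ref{lem:polyhedral} then gives $\dim F^*(W)\le q-1$. Combined with the lower bound from \eqref{eq:facedim}, this yields equality, which is (b). The point requiring care is the transfer to coordinates: the lemma needs the polytope structure in $R$-coordinates. This holds because $R\mapsto URU^\top$ is an injective affine, indeed linear, isomorphism onto $F^*(W)$, and injective linear maps preserve being a polytope and its dimension. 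One should also check that the optimal cut matrices in the hull are finitely many, which holds because $\mathcal C(W)$ is finite. This closes the cycle (a)$\Rightarrow$(d)$\Rightarrow$(c)$\Rightarrow$(b)$\Rightarrow$(a).
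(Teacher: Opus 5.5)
Your proposal is correct and follows the paper's proof essentially step for step: the coordinates from \eqref{eq:URU}, the expansion of $\diag(URU^\top)$ with $I/q$ as the positive definite feasible point, the simplex description under Schur independence, and Lemma~\ref{lem:polyhedral} for (c)$\Rightarrow$(b). The two small deviations are both valid. You obtain basis-independence of $P$ itself from its intrinsic description as the diagonals of symmetric matrices with range in $\ker Z^*$, where the paper deduces it from the intrinsic $\dim F^*(W)$. You also identify the optimal cuts through extremality of rank-one elliptope points, where the paper counts ranks inside the simplex.
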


\begin{proof}
By \eqref{eq:URU}, the injective map $R\mapsto URU^\top$ identifies the optimal face with
\[
\mathcal R=\{R\in\mathbb S^q:R\succeq0,\ \Phi(R)=\one\},
\qquad
\Phi(R)=\diag(URU^\top).
\]
Since every column of $U$ is a sign vector,
\begin{equation}\label{eq:phi}
\Phi(R)
=
(\operatorname{tr}R)\one
+
2\sum_{j<k}R_{jk}\bigl(x^{(j)}\circ x^{(k)}\bigr).
\end{equation}
Thus $\operatorname{im}\Phi=P$. Equivalently, if $u_i^\top$ is the $i$th row of $U$, then $\rank\Phi$ is the dimension spanned by the matrices $u_iu_i^\top$.

The matrix $I/q$ is positive definite and satisfies $\Phi(I/q)=\one$. Therefore $\mathcal R$ has the full affine dimension of the equations $\Phi(R)=\one$, giving
\[
\dim F^*(W)=\dim\mathcal R
=
\dim\ker\Phi
=
\binom{q+1}{2}-\dim P.
\]
This is the Li--Tam face-dimension formula in the chosen coordinates \cite[Corollary~31.5.4]{de1997geometry}. Since $P$ is spanned by $1+\binom q2$ vectors, the dimension is at least $q-1$. The formula also shows that $\dim P$ is independent of the cut basis.

Conditions~\ref{hull:a} and~\ref{hull:b} are equivalent by this dimension formula. We next prove that \ref{hull:a} implies \ref{hull:d}. Under Schur independence, \eqref{eq:phi} shows that $\Phi(R)=\one$ forces
\[
\operatorname{tr}R=1,
\qquad
R_{jk}=0\quad(j<k).
\]
Consequently,
\[
\mathcal R=
\{\Diag(\lambda):\lambda\ge0,\ \one^\top\lambda=1\},
\]
and its image is the simplex generated by the stated cut matrices. Because $U$ has full column rank, the rank of $U\Diag(\lambda)U^\top$ is the number of positive entries of $\lambda$. Thus the only rank-one matrices in this face are its vertices. Every optimal cut gives a rank-one matrix, so the optimal sign vectors are exactly $\pm x^{(1)},\ldots,\pm x^{(q)}$. This proves \ref{hull:d}, which immediately implies \ref{hull:c}.

Finally, suppose that \ref{hull:c} holds. There are finitely many cut matrices, so $F^*(W)$ is a polytope. The coordinate set $\mathcal R$ is likewise a compact polytope and contains the positive definite matrix $I/q$. Lemma~\ref{lem:polyhedral} gives
\[
\dim F^*(W)=\dim\mathcal R\le q-1.
\]
Together with \eqref{eq:facedim}, this proves \ref{hull:b}.

Under any of these conditions, the $1+\binom q2$ defining vectors of $P$ are independent in $\mathbb R^n$, so their number cannot exceed $n$.
\end{proof}

The lower bound $q-1$ also has a direct geometric interpretation. The $q$ basis cut matrices are affinely independent and lie in the optimal face, so their simplex already contributes $q-1$ dimensions. The theorem says that, under sign-spanning, equality with this minimum dimension is exactly the situation in which the entire face is generated by cuts. In particular, a face generated by optimal cuts cannot acquire additional cut vertices beyond the chosen basis.

The simplex conclusion and the counting restriction also agree with the classical polyhedral-face theorem \cite[Theorem~31.5.8]{de1997geometry}, applied with face dimension $q-1$. Once an optimal cut basis is supplied, Schur independence can be tested by an integer rank computation on its entrywise products \cite[Section~2.1.1]{KuengTropp21}. Finding such a basis is a separate task.

For $K_4$, the products in Example~\ref{ex:k4triangle} are independent and attain the counting bound
\[
1+\binom32=4.
\]
The theorem therefore recovers its triangular optimal face. For $K_6$, the same count already prevents equality with the cut hull. Removing a single edge gives an example in which the difference can also be measured directly.

\begin{example}\label{ex:k6hull}
For the unit-weight $K_6$, Proposition~\ref{prop:completeface} gives $q=5$. Since
\[
1+\binom52=11>6,
\]
Schur independence is impossible. Thus Theorem~\ref{thm:hull} guarantees optimal matrices outside the cut hull, as exhibited in Example~\ref{ex:notconvex}.

Now remove the edge $\{5,6\}$. The matrix
\[
4Z^*=J_6+(e_5-e_6)(e_5-e_6)^\top
\]
is a feasible dual slack of rank two, with dual objective nine. Its kernel consists of vectors with zero coordinate sum and equal fifth and sixth coordinates. The sign vectors in this kernel, up to global sign, are
\[
\begin{aligned}
x^{(1)}&=(1,1,1,-1,-1,-1)^\top,\\
x^{(2)}&=(1,1,-1,1,-1,-1)^\top,\\
x^{(3)}&=(1,-1,1,1,-1,-1)^\top,\\
x^{(4)}&=(1,-1,-1,-1,1,1)^\top.
\end{aligned}
\]
They certify exactness with maximum cut nine. They are also linearly independent, so they span the four-dimensional dual kernel.

Every entrywise product of these vectors has equal fifth and sixth coordinates. The five vectors
\[
\one,\quad
x^{(1)}\circ x^{(2)},\quad
x^{(1)}\circ x^{(3)},\quad
x^{(1)}\circ x^{(4)},\quad
x^{(2)}\circ x^{(3)}
\]
are independent, so $\dim P=5$. Hence
\[
\dim F^*(W)=\binom52-5=5.
\]
On the other hand, the four optimal cut matrices are affinely independent and their hull has dimension three. The optimal face therefore exceeds the cut hull by two dimensions. Here a dimension comparison alone proves strict containment.
\end{example}

The general bound in the following corollary is the classical Laurent--Poljak restriction on polyhedral elliptope faces \cite[Theorem~31.5.8]{de1997geometry}, expressed in terms of $c(W)$. The Schur-independence characterization identifies the number of cut vertices with $c(W)$ \cite[Theorem~3.4 and Lemma~4.7]{KuengTropp21}, without requiring sign-spanning.

\begin{corollary}\label{cor:hullK4}
Among unit-weight complete graphs of order at least two, precisely $K_2$ and $K_4$ have their optimal face equal to the convex hull of their maximum-cut matrices.

For an exact instance, if $c(W)=n-1$ and
\[
F^*(W)=\conv\{xx^\top:x\in\mathcal C(W)\},
\]
then $n\le4$. More generally, equality of the optimal face and its cut hull requires
\[
\binom{c(W)}2<n,
\]
without a sign-spanning assumption.
\end{corollary}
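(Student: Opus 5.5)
The plan has three parts, in the order the corollary states them: the general bound $\binom{c(W)}2<n$, then the case $c(W)=n-1$, then complete graphs.

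\textbf{The general bound.} Suppose $F^*(W)=\conv\{xx^\top:x\in\mathcal C(W)\}$ and set $c=c(W)$. Since $\mathcal C(W)$ is finite, the face is a polytope, and all its vertices are cut matrices. Choose optimal cut vectors $x^{(1)},\ldots,x^{(c)}$ forming a basis of $V=\spn\mathcal C(W)$, and let $U$ be the matrix with these columns.

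Every optimal cut lies in $V$. Hence every matrix in the cut hull, and by assumption every matrix in $F^*(W)$, has range inside $V=\range U$. So the face is parametrised as
\[
\{URU^\top:R\succeq0,\ \diag(URU^\top)=\one\},
\]
with $R\mapsto URU^\top$ injective. The matrix $I/c$ gives a positive definite feasible point. This is exactly the setting of the proof of Theorem~\ref{thm:hull}, with $V$ playing the role of the dual kernel; only sign-spanning is dropped. Two consequences follow:
\begin{itemize}
\item \eqref{eq:phi} gives $\dim F^*(W)=\binom{c+1}2-\dim P$, where $P$ is spanned by $\one$ and the products $x^{(j)}\circ x^{(k)}$ with $j<k$;
\item Lemma~\ref{lem:polyhedral}, applied to the polytope coordinate set, gives $\dim F^*(W)\le c-1$.
\end{itemize}
Combining them yields $\dim P\ge\binom{c+1}2-c+1=1+\binom c2$. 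Since $P\subseteq\mathbb R^n$, we get $1+\binom c2\le n$, which is $\binom c2<n$. I expect the main obstacle to be checking that the Theorem~\ref{thm:hull} coordinates really do not need sign-spanning. The key points are that the face lies in $\{URU^\top\}$ because of the hull assumption, and that $I/c$ is feasible because each row of $U$ has squared norm $c$.

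\textbf{The case $c(W)=n-1$.} The bound becomes $\binom{n-1}2<n$, that is, $(n-1)(n-2)<2n$, or $n^2-5n+2<0$. For integers this forces $n\le4$.

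\textbf{Complete graphs.} For odd $n$ I need the exactness status of unit-weight $K_n$. For $n\ge3$ odd it is not exact: an exact instance would need $Z^*\one$-type balancing, but no balanced sign vector exists. Concretely, $\SDP\ge n^2/4>\lfloor n^2/4\rfloor$, witnessed by $X=\frac{n}{n-1}(I-J/n)$. Since $\MC<\SDP$, the optimal face contains non-cut optima, and equality fails; I read "maximum-cut matrices" as the convex hull in the non-exact case too, where containment fails trivially. For even $n\ge6$, Proposition~\ref{prop:completeface} gives $c=n-1$, and the previous step rules out equality. Finally, $K_2$ works because its face is a single cut matrix, and $K_4$ works by Example~\ref{ex:k4triangle}.
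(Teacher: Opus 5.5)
Your proof is correct. For the general bound $\binom{c(W)}2<n$ it takes a different route from the paper; the rest matches.

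\textbf{The general bound.} The paper imports external structure here:
\begin{itemize}
\item the Laurent--Poljak theorem that a face generated by cut matrices is a simplex;
\item the Kueng--Tropp results that representatives of its vertices are Schur independent, hence linearly independent;
\item a short argument that every cut matrix in the face is a vertex, so the vertex count equals $c(W)$.
\end{itemize}
You instead rerun the proof of Theorem~\ref{thm:hull} in coordinates on $V=\spn\mathcal C(W)$ rather than on $\ker Z^*$. This is legitimate because the hull hypothesis confines every optimum to $V$.

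You state only one inclusion of the parametrisation. The reverse inclusion is also needed: every $URU^\top$ with $R\succeq0$ and unit diagonal must be optimal, so that the coordinate set is all of $F^*(W)$ and hence a polytope to which Lemma~\ref{lem:polyhedral} applies. It follows at once from $V\subseteq\ker Z^*$ and \eqref{eq:face}.

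With that in place, the Li--Tam count and Lemma~\ref{lem:polyhedral} give $\dim P\ge1+\binom c2$. What each approach buys:
\begin{itemize}
\item Your route stays within the paper's own tools. It also yields Schur independence of your chosen basis as a by-product, since $P$ is spanned by exactly $1+\binom c2$ vectors. The simplex description then follows as in Theorem~\ref{thm:hull}.
\item The paper's route is shorter but relies on cited theorems for those same structural facts.
\end{itemize}

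\textbf{The case $c(W)=n-1$.} You specialize the general inequality to $n^2-5n+2<0$. The paper instead passes through $d(W)=1$ and sign-spanning before invoking Theorem~\ref{thm:hull}. Both give $n\le4$.

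\textbf{Complete graphs.} Your treatment matches the paper's. The opening remark about ``$Z^*\one$-type balancing'' is vague, but the explicit witness $X=\frac{n}{n-1}(I-J/n)$ settles the odd case on its own.
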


\begin{proof}
For every $n\ge2$, the dual solution and the matrix $X_0$ used in Proposition~\ref{prop:completeface} show that the unit-weight $K_n$ has SDP value $n^2/4$. If $n$ is odd, its maximum-cut value is $(n^2-1)/4$, so its maximum-cut matrices do not belong to the optimal SDP face.

For even $n$, the instance is sign-spanned with $q=n-1$. Theorem~\ref{thm:hull} therefore requires
\[
1+\binom{n-1}2\le n,
\]
which gives $n\le4$. Equality of the face and cut hull holds for $K_4$ by Example~\ref{ex:k4triangle}, and for $K_2$ because its optimal face is a single cut matrix.

If an exact instance has $c(W)=n-1$, the inequalities in \eqref{eq:rankbound} force $d(W)=1$, so the instance is sign-spanned with $q=n-1$. The same counting argument gives $n\le4$.

For the general assertion, suppose that the optimal face is generated by cut matrices. It is a simplex by \cite[Theorem~31.5.8]{de1997geometry}. Choose one sign representative for each vertex. These representatives are Schur independent by \cite[Theorem~3.4]{KuengTropp21}, and hence linearly independent by \cite[Lemma~4.7]{KuengTropp21}.

Every cut matrix in the face is a vertex: a rank-one positive semidefinite matrix can be a nontrivial convex combination of positive semidefinite matrices only if their ranges lie in its one-dimensional range, and the unit-diagonal constraints then force the matrices to coincide. Thus the chosen representatives account for all optimal cuts up to sign, and their number is $c(W)$. Schur independence gives
\[
1+\binom{c(W)}2\le n,
\]
as required.
\end{proof}
Consequently, an optimal face generated by cuts must have $c(W)=O(\sqrt n)$, whereas full cut-span can be as large as $n-1$. For the unit-weight $K_n$ with even $n\ge4$, combining \eqref{eq:facedim} with Proposition~\ref{prop:completeface} gives $\dim P=n$. Thus the entrywise products of a balanced cut basis span all of $\mathbb R^n$, even though their dependencies prevent the optimal face from being a cut simplex once $n\ge6$.

\section{Low-rank factors and complete positivity}
\label{sec:lowrank}

At dual ranks one and two, the Gram factors of an exact instance admit nonnegative coordinates in the smallest possible dimension. This makes both the support graph and the equations defining optimal cuts particularly explicit. We use these normal forms to apply the multiplicity formula, and then examine why the same description fails at higher ranks.

The factorization results themselves are classical: rank-one factorization gives the first normal form, while a rank-two doubly nonnegative matrix has cp-rank two \cite{berman2003completely,BrandtsKrizek16}. We include the arguments to identify the corresponding kernel sign equations and distinguish the factorization question from the spanning question.

For a positive vector $v$, the product weights $W_{ij}=4v_iv_j$ have a simple cut interpretation. If $T=\sum_i v_i$, then
\[
\cut_W(x)
=
4\left(\sum_{x_i=1}v_i\right)
 \left(\sum_{x_i=-1}v_i\right)
=
T^2-(v^\top x)^2.
\]
Thus a sign vector orthogonal to $v$ partitions the total weight equally between the two sides and attains cut value $T^2$. Such a vector also lies in the kernel of the candidate slack $vv^\top$. The following theorem shows that every exact instance of dual rank one arises in this way.

\begin{theorem}\label{thm:rankone}
A nonnegatively weighted exact instance without isolated vertices has $d(W)=1$ if and only if there is $v\in\mathbb R_{>0}^n$ such that
\[
W_{ij}=4v_iv_j\quad(i\ne j),
\qquad
v^\perp\cap\sgn^n\ne\varnothing.
\]
In this case,
\[
Z^*=vv^\top,
\qquad
c(W)=\sigma(v^\top),
\]
and the support graph is $K_n$. The cut-span dimension is given by Theorem~\ref{thm:multiplicity}, with the distinct values of $v$ as the distinct columns.
\end{theorem}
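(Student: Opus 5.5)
The argument runs in two directions, both going through Proposition~\ref{prop:dnn} and Corollary~\ref{cor:construction}.

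For the forward direction, suppose the instance is exact with $d(W)=1$. By Proposition~\ref{prop:dnn}, $Z^*$ is doubly nonnegative of rank one, so $Z^*=vv^\top$ for some $v\in\mathbb R^n$. After a global sign change, nonnegativity of the entries lets us take $v\ge0$: $v_iv_j\ge0$ for all $i,j$ forces all nonzero coordinates to share a sign. Proposition~\ref{prop:certificate} gives $Z^*_{ii}>0$, so $v_i^2>0$ and hence $v\in\mathbb R^n_{>0}$. The off-diagonal identity $W_{ij}=4Z^*_{ij}=4v_iv_j$ is part of Proposition~\ref{prop:dnn}. Any maximum cut $x$ satisfies $Z^*x=0$, that is, $v^\top x=0$, so $v^\perp\cap\sgn^n\ne\varnothing$.

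For the converse, take $v>0$ with $W_{ij}=4v_iv_j$ and a sign vector $x\perp v$. Apply Corollary~\ref{cor:construction} with the $1\times n$ factor $Y=v^\top$. Its columns $v_i$ are nonzero scalars with pairwise nonnegative products. Since $W_{ij}>0$ for every $i\ne j$, the support graph is $K_n$, and there are no isolated vertices because $n\ge2$. The sign vector $x$ gives a point of the feasible count set $S$. The corollary then yields exactness, $Z^*=vv^\top$, and $c(W)=\sigma(v^\top)$. Because $v\ne0$, we get $d(W)=\rank vv^\top=1$. This same computation also supplies the ``in this case'' assertions in the forward direction, since there too $Z^*=vv^\top$ with $v>0$, so the support graph is $K_n$.

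The final sentence follows by reading Theorem~\ref{thm:multiplicity} with $Y=v^\top$. Its distinct columns are the distinct values of $v$, and the multiplicities are their repetition counts. No further work is needed.

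The only delicate step is the sign normalization of the rank-one factor. This is where positivity of the diagonal, which comes from the no-isolated-vertices assumption via Proposition~\ref{prop:certificate}, is essential, rather than mere nonnegativity of the entries. Everything else is bookkeeping on earlier results.
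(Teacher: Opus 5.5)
Your proof is correct and follows the paper's argument essentially step for step. In the forward direction you normalize the rank-one factor $vv^\top$ using the positive diagonal from Proposition~\ref{prop:certificate}, where the paper argues directly that a zero entry would create an isolated vertex; this is the same fact. In the converse you certify $vv^\top$ through Corollary~\ref{cor:construction}, which is just the packaged form of the paper's appeal to Proposition~\ref{prop:dnn} together with \eqref{eq:cuts}.
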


\begin{proof}
A nonzero rank-one positive semidefinite slack has the form $vv^\top$. No entry of $v$ can vanish, since a zero entry would give an isolated vertex. Entrywise nonnegativity of the slack then implies that all entries of $v$ have the same sign; replace $v$ by $-v$ if necessary to make them positive. Its off-diagonal entries give $W_{ij}=4v_iv_j$, and exactness supplies a sign vector orthogonal to $v$.

Conversely, the stated conditions make $vv^\top$ a doubly nonnegative certificate complementary to a cut matrix. Proposition~\ref{prop:dnn} therefore gives exactness and identifies the optimal slack. Since all coordinates of $v$ are positive, every off-diagonal weight is positive. Finally, \eqref{eq:cuts} gives $c(W)=\sigma(v^\top)$.
\end{proof}

The kernel condition is essential to this rank-one description. There are exact product-weight instances for which no sign vector is orthogonal to the defining positive vector. In particular, Laurent and Poljak's exactness criterion includes the regime in which one coordinate strictly exceeds the sum of the others \cite[Theorem~3.3]{LP95}. In that regime, $vv^\top$ cannot be the optimal slack, because its kernel contains no cut sign vector. Theorem~\ref{thm:rankone} characterizes dual rank one, rather than all exact product-weight instances.

The largest possible cut-span dimension is $n-1$. The rank bound from Section~\ref{sec:prelim} forces any instance attaining it to have dual rank one, so the preceding theorem and the multiplicity formula give a complete characterization.

\begin{corollary}\label{cor:maxspan}
A nonnegatively weighted exact instance without isolated vertices satisfies $c(W)=n-1$ if and only if
\[
W_{ij}=4v_iv_j\quad(i\ne j)
\]
for some $v>0$ such that conditions~\ref{cond:1} and~\ref{cond:2} of Theorem~\ref{thm:multiplicity} hold for $Y=v^\top$.
\end{corollary}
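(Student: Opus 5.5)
The plan is to combine the rank bound \eqref{eq:rankbound} with Theorem~\ref{thm:rankone} and the spanning criterion of Theorem~\ref{thm:multiplicity}, proving the two implications separately.

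\emph{Forward direction.} Suppose the instance is exact and $c(W)=n-1$. By \eqref{eq:rankbound}, $c(W)\le n-d(W)$ and $d(W)\ge1$, so $n-1\le n-d(W)$ forces $d(W)=1$, and the instance is sign-spanned. Theorem~\ref{thm:rankone} then supplies $v>0$ with $W_{ij}=4v_iv_j$ for $i\ne j$, $Z^*=vv^\top$, and $c(W)=\sigma(v^\top)$. Since $\dim\ker v^\top=n-1=\sigma(v^\top)$, the sign vectors span $\ker v^\top$. The ``in particular'' clause of Theorem~\ref{thm:multiplicity}, applied to $Y=v^\top$ with the distinct values of $v$ as the distinct columns, then gives conditions~\ref{cond:1} and~\ref{cond:2}.

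\emph{Reverse direction.} Let $v>0$ satisfy conditions~\ref{cond:1} and~\ref{cond:2} for $Y=v^\top$. Theorem~\ref{thm:multiplicity} shows that the sign vectors in $v^\perp$ span the $(n-1)$-dimensional space $\ker v^\top$. Because $n\ge2$, this space is nonzero, so $v^\perp\cap\sgn^n\ne\varnothing$. This nonemptiness is the hypothesis Theorem~\ref{thm:rankone} requires, and it is the one point that needs care. The weights $4v_iv_j$ are positive, so there are no isolated vertices. Theorem~\ref{thm:rankone}, or directly Proposition~\ref{prop:dnn} with $Z=vv^\top$, then gives exactness, $Z^*=vv^\top$, and $c(W)=\sigma(v^\top)=n-1$.

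One degenerate case must be checked: if conditions~\ref{cond:1} and~\ref{cond:2} could hold with $S=\varnothing$, the spanning conclusion would fail. However, Theorem~\ref{thm:multiplicity} states the equivalence unconditionally, so the conditions cannot hold vacuously. This is immediate, since $Q=\{0\}$ when $S$ is empty, whereas condition~\ref{cond:2} requires $Q$ to have dimension $r-1$. If $r=1$, then all $m_1=n\ge2$ columns are equal, and condition~\ref{cond:1} demands activity, which requires $S\ne\varnothing$. Once this point is settled, the rest of the argument is routine bookkeeping.
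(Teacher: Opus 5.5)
Your proof is correct and follows the paper's argument. The rank bound forces $d(W)=1$, Theorem~\ref{thm:rankone} gives the product form with $c(W)=\sigma(v^\top)$, and the spanning criterion of Theorem~\ref{thm:multiplicity} handles both directions; as in the paper, nonemptiness of $v^\perp\cap\sgn^n$ follows because the sign vectors span a nonzero space. Your final paragraph on the case $S=\varnothing$ is correct but redundant, since the equivalence you already invoked yields that nonemptiness.
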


\begin{proof}
If $c(W)=n-1$, then \eqref{eq:rankbound} forces $d(W)=1$. Apply Theorem~\ref{thm:rankone} and the full-kernel spanning criterion of Theorem~\ref{thm:multiplicity}.

Conversely, the two conditions give
\[
\sigma(v^\top)=\dim v^\perp=n-1.
\]
In particular, a kernel sign vector exists. Theorem~\ref{thm:rankone} then supplies the exact certificate and gives $c(W)=n-1$.
\end{proof}

The examples used earlier to distinguish the dimension invariants also illustrate the two independent obstructions in the multiplicity formula. Full spanning can fail because a repeated-value block is inactive, or because its feasible block sums do not generate every relation among the distinct values.

\begin{example}\label{ex:rankoneexamples}
For $v=(1,1,1,3)$, the only kernel sign vectors are
\[
\pm(1,1,1,-1).
\]
Hence $c(W)=1$ and $d(W)=1$. The feasible block sums span the relation between the distinct values $1$ and $3$, so condition~\ref{cond:2} holds. However, the first three signs must agree, making their repeated-value block inactive. Thus condition~\ref{cond:1} fails.

For $v=(1,1,2,2)$, the kernel sign vectors satisfy
\[
x_1=-x_2,\qquad x_3=-x_4.
\]
Both repeated-value blocks are active, but every feasible block sum is zero. Thus condition~\ref{cond:1} holds and condition~\ref{cond:2} fails, giving $c(W)=2$ while $d(W)=1$. Together, these examples show that neither condition implies the other.

Nonuniform weights can nevertheless attain full cut-span. For
\[
v=(1,1,1,1,1,3),
\]
a kernel sign vector with final coordinate $+1$ has exactly one positive coordinate among the first five. The resulting five vectors are linearly independent. Indeed, in a relation with coefficients $a_1,\ldots,a_5$ and $a=\sum_j a_j$, the first five coordinates give $2a_i-a=0$, while the final coordinate gives $a=0$. All coefficients therefore vanish.

It follows that $c(W)=5=n-1$, although the edge weights take the two values $4$ and $12$. Since $\sum_i v_i=8$, the product-weight identity gives
\[
\MC(W)=\SDP(W)=64.
\]
\end{example}

At dual rank two, the columns of a Gram factor lie in the plane and have pairwise nonnegative inner products. Their directions therefore fit into a quadrant after an orthogonal change of coordinates. The two nonnegative coordinates give two simultaneous balance equations for the optimal signs, while orthogonality between columns determines the missing edges.

\begin{theorem}\label{thm:ranktwo}
A nonnegatively weighted exact instance without isolated vertices has $d(W)=2$ if and only if there are independent vectors $u,w\in\mathbb R_{\ge0}^n$, with $(u_i,w_i)\ne(0,0)$ for every $i$, such that
\[
W_{ij}=4(u_iu_j+w_iw_j)\quad(i\ne j)
\]
and the matrix $Y$ with rows $u,w$ has a sign vector in its kernel. In this case,
\[
Z^*=Y^\top Y,
\qquad
c(W)=\sigma(Y).
\]
The equality $c(W)=n-2$ holds precisely when conditions~\ref{cond:1} and~\ref{cond:2} of Theorem~\ref{thm:multiplicity} hold.

Let
\[
S_0=\{i:u_i=0\},
\qquad
T_0=\{i:w_i=0\}.
\]
These sets are disjoint, and the nonedges are exactly the pairs joining $S_0$ and $T_0$. Thus the complement of the support graph is $K_{|S_0|,|T_0|}$ together with isolated vertices.
\end{theorem}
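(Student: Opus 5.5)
The plan is to mirror the proof of Theorem~\ref{thm:rankone}, with the rank-one sign argument replaced by a planar quadrant argument. For the forward direction, assume the instance is exact with $d(W)=2$. By Proposition~\ref{prop:dnn}, there is a Gram factor $Z^*=B^\top B$ with $B\in\mathbb R^{2\times n}$ of full row rank. Its columns $b_i$ are nonzero and satisfy $\langle b_i,b_j\rangle\ge0$ for all $i,j$.

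The key step is to rotate these columns into the closed nonnegative quadrant. Each $b_i$ has an angle $\theta_i$ on the circle. Pairwise nonnegative inner products mean that any two directions differ by at most $\pi/2$ modulo $2\pi$. The task is to show that all directions then lie in a closed arc of length $\pi/2$.

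For this, pick the direction $\theta_1$. Every other direction lies in $[\theta_1-\pi/2,\theta_1+\pi/2]$. Let $\alpha$ and $\beta$ be the minimum and maximum angles in this window. The pair realizing $\alpha$ and $\beta$ must itself have nonnegative inner product, and since $\beta-\alpha\le\pi$, this gives $\beta-\alpha\le\pi/2$.

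Rotating by an orthogonal $O$ that maps this arc onto $[0,\pi/2]$ gives $Y=OB$ with nonnegative rows $u,w$. Then $Y^\top Y=Z^*$, and the rows are independent because $\rank Y=2$. Since no column is zero, $(u_i,w_i)\ne(0,0)$ for every $i$. Off-diagonal entries give $W_{ij}=4(u_iu_j+w_iw_j)$, and any optimal cut is a sign vector in $\ker Y$.

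For the converse, $Y^\top Y$ is doubly nonnegative with rank two, and a sign vector lies in its kernel. We need no isolated vertices before applying Proposition~\ref{prop:dnn}. This is where the kernel sign vector is used.

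Suppose vertex $i$ had no neighbours. Then every other column would be orthogonal to $y_i$ with nonnegative coordinates, so it would lie on the single complementary axis. For the sign vector $x$ we have $x_iy_i=-\sum_{j\ne i}x_jy_j$. The left side lies on $y_i$'s ray and the right side on the orthogonal axis, so $y_i=0$. This is a contradiction.

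Proposition~\ref{prop:dnn} then gives exactness, $Z^*=Y^\top Y$, and $d(W)=\rank Y=2$. Corollary~\ref{cor:construction} gives $c(W)=\sigma(Y)$. The criterion for $c(W)=n-2$ is the full-kernel case of Theorem~\ref{thm:multiplicity}, since $\dim\ker Y=n-2$.

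For the support description, $S_0$ and $T_0$ are disjoint because no column is zero. A pair $i\ne j$ is a nonedge exactly when $u_iu_j+w_iw_j=0$. Both terms are nonnegative, so this means $u_iu_j=0$ and $w_iw_j=0$. Column $i$ has a positive coordinate. If $u_i>0$, the first condition forces $u_j=0$, so $j\in S_0$. Then $w_j>0$, and the second condition forces $w_i=0$, so $i\in T_0$. The case $w_i>0$ is symmetric.

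Conversely, any pair with $i\in S_0$ and $j\in T_0$ is clearly a nonedge. Hence the complement of the support graph is exactly $K_{|S_0|,|T_0|}$ together with isolated vertices.

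The main obstacle is the quadrant lemma in the forward direction, specifically the wrap-around care needed in the angular argument. An alternative route is to cite that rank-two doubly nonnegative matrices have cp-rank two, but the angular argument keeps the proof self-contained.
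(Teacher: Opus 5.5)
Your proposal is correct and follows the paper's proof essentially step for step. You use the same angular argument for the forward direction (fix one column, place all others within $\pi/2$ of it, then use the nonnegative inner product of the extreme pair), the same use of the kernel sign vector to rule out isolated vertices before invoking Proposition~\ref{prop:dnn}, and the same both-summands-vanish analysis of nonedges. Your orthogonal-decomposition phrasing of the isolated-vertex step is equivalent to the paper's pairing of $Yx=0$ with $y_i$, and your support argument just spells out the case analysis the paper leaves implicit.
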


\begin{proof}
Factor the rank-two optimal slack as $Y^\top Y$, with $Y$ having two independent rows. Its columns are nonzero and have pairwise nonnegative inner products.

We recall the planar argument underlying the rank-two factorization result \cite[Remark~1.1]{BrandtsKrizek16}. Fix one column direction at angle zero. All remaining directions can be represented by angles in $[-\pi/2,\pi/2]$. The difference between the largest and smallest angles belongs to $[0,\pi]$ and has nonnegative cosine, because the corresponding columns have nonnegative inner product. This difference is therefore at most $\pi/2$. An orthogonal change of coordinates places every column in the nonnegative quadrant, giving the required rows $u,w$. Exactness supplies a kernel sign vector.

Conversely, the stated factor gives a doubly nonnegative matrix $Y^\top Y$ of rank two. Its graph has no isolated vertices: if $y_i$ were orthogonal to every other column, taking the inner product of $Yx=0$ with $y_i$ would give $x_i\|y_i\|^2=0$, contrary to $y_i\ne0$. The kernel sign vector makes the certificate complementary to a cut matrix, so Proposition~\ref{prop:dnn} proves exactness and identifies the optimal slack. The cut-span assertions follow from \eqref{eq:cuts} and Theorem~\ref{thm:multiplicity}.

Finally, the nonnegative inner product
\[
u_iu_j+w_iw_j
\]
vanishes precisely when both summands vanish. Since neither column is zero, this happens exactly when one column lies on one coordinate axis and the other lies on the other axis. These are precisely the pairs joining $S_0$ and $T_0$.
\end{proof}

The rank-two hypothesis cannot be inferred from cut-span dimension alone. The next observation records the distinction, which is relevant when using the preceding theorem as a characterization.

\begin{remark}\label{rem:ranktwohypothesis}
The equality $c(W)=n-2$ does not imply $d(W)=2$. For $v=(1,1,2,2)$ in Example~\ref{ex:rankoneexamples}, one has $c(W)=n-2$ but $d(W)=1$.

More generally, \eqref{eq:rankbound} shows that every exact instance with $c(W)=n-2\ge1$ falls into exactly one of two cases: dual rank one with $\sigma(v^\top)=n-2$, or dual rank two with $\sigma(Y)=n-2$. Theorems~\ref{thm:rankone} and~\ref{thm:ranktwo} cover these cases. They characterize the instances through suitable factors; they do not assert efficient recovery of such a factor from $W$.
\end{remark}

A small example shows how the two balance equations and the support description work together.

\begin{example}\label{ex:pendantone}
Take
\[
u=(0,0,0,1,1),
\qquad
w=(1,1,1,0,1).
\]
The resulting support graph consists of a $K_4$ on $\{1,2,3,5\}$ and a pendant edge $\{4,5\}$, with every edge of weight four. The kernel sign equations are
\[
x_4=-x_5,
\qquad
x_1+x_2+x_3+x_5=0.
\]
Fixing $x_5=1$ requires exactly one of $x_1,x_2,x_3$ to be positive. This gives three linearly independent kernel sign vectors, and their negatives account for the remaining ones. Since $u,w$ are independent,
\[
d(W)=2,
\qquad
c(W)=3=n-2.
\]
Each optimal cut crosses four edges of the $K_4$ and the pendant edge, so
\[
\MC(W)=\SDP(W)=20.
\]
\end{example}

\subsection{Complete positivity and minimum-dimensional Gram factors}
\label{sec:gram}

The preceding normal forms use nonnegative factors with exactly as many rows as the rank of the slack. Two different obstructions appear at higher rank. A slack may fail to be completely positive, as in Example~\ref{ex:horn14}; alternatively, it may admit a nonnegative factor but require more rows than its rank. The latter possibility is measured by excess cp-rank \cite{berman2003completely,BrandtsKrizek16}.

The support of a nonnegative factor gives a simple way to detect this second obstruction. Positive edge weights arise from shared positive coordinates, so the support graph constrains how many coordinates a nonnegative representation requires.

\begin{proposition}\label{prop:orthant}
Suppose that a certificate has a factorization
\[
Z^*=N^\top N,
\qquad
N\in\mathbb R_{\ge0}^{q\times n},
\]
with nonzero columns. If $S_i\subseteq\{1,\ldots,q\}$ is the support of column $i$, then, for distinct vertices $i,j$,
\[
\{i,j\}\in E(G)
\quad\Longleftrightarrow\quad
S_i\cap S_j\ne\varnothing.
\]
\end{proposition}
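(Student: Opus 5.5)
The plan is to connect edge weights to the off-diagonal entries of the certificate, and then read those entries off the nonnegative factor. The argument has two steps.

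\emph{Step 1: edges are the positive off-diagonal entries of $Z^*$.} By Proposition~\ref{prop:dnn}, or directly from the definition $Z^*=\Diag(s^*)-\tfrac14L_W$, we have $Z^*_{ij}=W_{ij}/4$ for $i\ne j$. Since $W\ge0$ and an edge means $W_{ij}>0$, the pair $\{i,j\}$ is an edge of $G$ exactly when $Z^*_{ij}>0$.

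\emph{Step 2: express the entry through the factor.} Writing $n_i$ for the $i$th column of $N$, we have
\[
Z^*_{ij}=\langle n_i,n_j\rangle=\sum_{\ell=1}^q N_{\ell i}N_{\ell j}.
\]
Every summand is nonnegative, so the sum is positive if and only if some summand is positive. A summand $N_{\ell i}N_{\ell j}$ is positive exactly when $\ell\in S_i\cap S_j$. Hence $Z^*_{ij}>0$ precisely when $S_i\cap S_j\ne\varnothing$. Combining this with Step 1 gives the stated equivalence.

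Neither step is a real obstacle; the only point needing care is Step 1, namely the sign convention in the slack. The hypothesis that the columns are nonzero is not needed for the equivalence itself. It only guarantees that each $S_i$ is nonempty, which matches the standing assumption that there are no isolated vertices.
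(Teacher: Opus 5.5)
Your proposal is correct and takes essentially the same approach as the paper. The paper's proof is the single observation that $W_{ij}=4\langle N_i,N_j\rangle$ is positive exactly when the two nonnegative columns share a positive coordinate, and your two steps spell this out. You are also right that the nonzero-column hypothesis is not used in the equivalence itself.
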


\begin{proof}
The weight $W_{ij}=4\langle N_i,N_j\rangle$ is positive precisely when the two nonnegative columns have a common positive coordinate.
\end{proof}

Equivalently, the nonzero entries in each row of $N$ are indexed by a clique of the support graph, and these row supports must cover every edge. This observation can force the number of rows to exceed the rank of the matrix.

For a rank-$d$ matrix $Z$, a Gram factor with $d$ rows can be moved by an orthogonal transformation into $\mathbb R_{\ge0}^d$ precisely when the cp-rank of $Z$ equals $d$. One direction follows by applying the transformation to the factor. For the converse, two full-row-rank $d$-dimensional Gram factors of the same matrix differ by an orthogonal transformation. Complete positivity by itself only guarantees a nonnegative factor in some dimension, which may be larger than $d$.

\begin{example}\label{ex:cprank}
Let $\gamma=1/\sqrt2$ and consider the four columns
\[
y_1=(\gamma,0,\gamma),\qquad
y_2=(0,\gamma,\gamma),\qquad
y_3=(-\gamma,0,\gamma),\qquad
y_4=(0,-\gamma,\gamma).
\]
Their Gram matrix is
\[
Z=
\begin{pmatrix}
1&1/2&0&1/2\\
1/2&1&1/2&0\\
0&1/2&1&1/2\\
1/2&0&1/2&1
\end{pmatrix},
\qquad
\rank Z=3.
\]
It is completely positive, as witnessed by
\[
Z=N^\top N,
\qquad
N=\frac1{\sqrt2}
\begin{pmatrix}
1&1&0&0\\
0&1&1&0\\
0&0&1&1\\
1&0&0&1
\end{pmatrix}.
\]

The off-diagonal support graph is $C_4$. Every row of any nonnegative factor must be supported on a clique of this graph. A clique contains at most one edge, and each of the four edges must occur in at least one row support. Thus every nonnegative factor has at least four rows. The displayed factor attains this bound, so the cp-rank is exactly four. In particular, no orthogonal transformation can place the original four columns in the nonnegative orthant of $\mathbb R^3$.

The kernel of $Z$ is spanned by the sign vector
\[
(1,-1,1,-1)^\top.
\]
Consequently, $Z$ is the optimal slack for the four-cycle with edge weights two, and the instance is already sign-spanned. This gives excess cp-rank in a sign-spanned certificate that is completely positive, complementing the failure of complete positivity in Example~\ref{ex:horn14}.

Repeating each column twice enlarges the dual kernel while preserving the same obstruction. The replicated instance has $n=8$ and dual rank three. For the multiplicity vector $(2,2,2,2)$, the feasible negative-sign count vectors are
\[
S=\{(0,2,0,2),(1,1,1,1),(2,0,2,0)\}.
\]
All four blocks are active, and the feasible block sums span a one-dimensional space. Theorem~\ref{thm:multiplicity} therefore gives
\[
c(W)=4(2-1)+1=5=n-d(W).
\]
The complement of the support graph consists of two disjoint copies of $K_{2,2}$.

The replicated slack has diagonal sum eight, and the total edge weight is $48$. Since $s_i^*=Z_{ii}^*+\delta_i/4$, its dual objective is
\[
\MC(W)=\SDP(W)
=
8+\frac{48}{2}
=
32.
\]
Repeating the columns of $N$ gives a nonnegative factor with four rows. Conversely, selecting one vertex from each replication block recovers $Z$ as a principal submatrix, so no factor with fewer than four rows is possible. The replicated slack therefore still has rank three and cp-rank four, in accordance with Theorem~\ref{thm:realization}.
\end{example}

\section{Orthogonal cuts and bounded weight directions}
\label{sec:orthogonal}

An exact instance can have $n-1$ linearly independent optimal cut vectors even when its weights are nonuniform, as Example~\ref{ex:rankoneexamples} shows. Requiring these vectors to be mutually orthogonal is substantially more restrictive. We first show that this requirement forces a uniformly weighted complete graph and a Hadamard matrix. We then return to full cut-span without orthogonality and prove that, for each fixed order, only finitely many positive weight directions are possible.

The orthogonal case uses an elementary fact from partial-Hadamard completion theory: $n-1$ mutually orthogonal sign rows can be completed by one further sign row. We include the argument in the proof, where positivity of the dual factor forces the completing row to be constant; see \cite{horadam2007hadamard,GoldbergerStrassler22} for the surrounding theory.

\begin{theorem}\label{thm:hadamard}
For an exact instance with nonnegative weights and no isolated vertices,
\[
o(W)\le n-1.
\]
Equality holds if and only if a Hadamard matrix of order $n$ exists and
\[
W=\lambda A(K_n)
\]
for some $\lambda>0$.
\end{theorem}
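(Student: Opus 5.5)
The bound $o(W)\le n-1$ is immediate from \eqref{eq:rankbound}, since $o(W)\le c(W)\le n-d(W)\le n-1$. The plan for the equality case is to show that $o(W)=n-1$ forces dual rank one with a constant vector. Then Theorem~\ref{thm:rankone} yields a uniformly weighted complete graph, and the orthogonal cuts together with $\one$ form a Hadamard matrix.

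Suppose $x^{(1)},\ldots,x^{(n-1)}$ are pairwise orthogonal optimal sign vectors. They are independent, so $c(W)=n-1$. By \eqref{eq:rankbound}, $d(W)=1$, so $\ker Z^*$ is exactly their span. By Theorem~\ref{thm:rankone}, $Z^*=vv^\top$ with $v>0$, and $v^\perp=\spn\{x^{(j)}\}$.

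The key step is to show that $v$ is a multiple of $\one$. I would use the partial-Hadamard completion idea. Let $M$ be the $(n-1)\times n$ matrix with rows $x^{(j)\top}$. Then $MM^\top=nI_{n-1}$, so $P=\frac1nM^\top M$ is the orthogonal projection onto $v^\perp$. Hence
\[
I-\frac1nM^\top M=\frac{vv^\top}{\|v\|^2}.
\]
The diagonal of $M^\top M$ is $(n-1)\one$, because each column of $M$ consists of $n-1$ signs. Therefore $v_i^2/\|v\|^2=1-(n-1)/n=1/n$ for every $i$. Thus all $|v_i|$ are equal, and positivity gives $v=\alpha\one$. It follows that $W_{ij}=4\alpha^2$ for all $i\ne j$, which is $W=\lambda A(K_n)$ with $\lambda=4\alpha^2>0$. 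Moreover, the rows $x^{(j)}$ together with $\one$ are $n$ pairwise orthogonal sign vectors, since each $x^{(j)}\perp v=\alpha\one$. They form a Hadamard matrix of order $n$. I expect this diagonal computation to be the main point. It is short, but it is where the orthogonality hypothesis is used beyond mere independence; Example~\ref{ex:rankoneexamples} shows that independence alone allows nonuniform $v$.

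For the converse, suppose $H$ is a Hadamard matrix of order $n$ and $W=\lambda A(K_n)$. Normalize $H$ by multiplying columns by signs so that its first row is $\one$; the rows remain pairwise orthogonal sign vectors. The other $n-1$ rows are then orthogonal to $\one$. If $n=1$ the statement is vacuous, since we assume $n\ge2$. For $n\ge2$ a Hadamard order is even, so Theorem~\ref{thm:rankone} with $v=\sqrt{\lambda}/2\,\one$ applies, using any remaining row as a kernel sign vector. This gives exactness with $Z^*=vv^\top$, and by \eqref{eq:cuts} those $n-1$ rows are optimal cuts. They are pairwise orthogonal, so $o(W)\ge n-1$, and equality follows from the bound.
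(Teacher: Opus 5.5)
Your proof is correct and follows the paper's argument. Your projection identity $I-\frac1nM^\top M=vv^\top/\|v\|^2$ is exactly the paper's relation $H^\top H=nI$ for the square matrix $H$ obtained by appending the row $\sqrt n\,v^\top/\|v\|$, read off on the diagonal. The only cosmetic difference is in the converse: you invoke Theorem~\ref{thm:rankone} directly with $v=\tfrac{\sqrt\lambda}{2}\one$, whereas the paper uses Proposition~\ref{prop:completeface} plus positive rescaling. Your parity remark there is unnecessary, since any non-constant row of the normalized Hadamard matrix already supplies the required kernel sign vector.
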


\begin{proof}
The upper bound follows from \eqref{eq:rankbound}. Suppose that equality holds. Then $c(W)=n-1$ and $d(W)=1$, so Theorem~\ref{thm:rankone} gives
\[
Z^*=vv^\top,\qquad v>0.
\]
The $n-1$ orthogonal optimal sign vectors form an orthogonal basis of $v^\perp$, each with squared norm $n$. Form an $n\times n$ matrix $H$ with these vectors as its first $n-1$ rows and
\[
\sqrt n\,\frac{v^\top}{\|v\|}
\]
as its final row. Then $HH^\top=nI$, and hence $H^\top H=nI$. The $i$th diagonal entry of the latter identity gives
\[
n-1+n\frac{v_i^2}{\|v\|^2}=n.
\]
Thus $v_i^2=\|v\|^2/n$ for every $i$. Since $v>0$, all its coordinates are equal. Consequently, the edge weights $W_{ij}=4v_iv_j$ are uniform, and the final row of $H$ is $\one^\top$. All rows of $H$ are therefore sign vectors, making $H$ a Hadamard matrix.

Conversely, suppose that a Hadamard matrix of order $n$ exists. By changing column signs, normalize one row to $\one^\top$. The other $n-1$ rows are balanced, pairwise orthogonal sign vectors. They are optimal cuts of the unit-weight $K_n$ by Proposition~\ref{prop:completeface}. Positive rescaling of the weights preserves exactness and the optimal cuts, proving the assertion for every $\lambda>0$.
\end{proof}

\begin{remark}\label{prop:signed}
The certificate identities and the multiplicity formula also apply when the edge weights have arbitrary signs, with entrywise nonnegativity of the slack no longer required. For rank-one certificates, the effect of allowing signed weights is particularly simple. If $v$ has no zero coordinates, write $v=D_\varepsilon|v|$, where $\varepsilon_i=\operatorname{sign}(v_i)$ and $D_\varepsilon=\Diag(\varepsilon)$. The map $x\mapsto D_\varepsilon x$ gives a bijection between the kernel sign vectors of $v^\top$ and those of $|v|^\top$, preserving linear independence and orthogonality.

Consequently, an exact signed-weight instance without isolated vertices has $c(W)=n-1$ precisely when
\[
W_{ij}=4v_iv_j\quad(i\ne j)
\]
for some $v$ with no zero coordinates and $\sigma(v^\top)=n-1$. Indeed, maximal cut-span forces the optimal slack to have rank one, and the converse follows from the certificate $vv^\top$. Likewise, $o(W)=n-1$ precisely when a Hadamard matrix of order $n$ exists and
\[
W_{ij}=a\varepsilon_i\varepsilon_j\quad(i\ne j)
\]
for some $a>0$ and $\varepsilon\in\sgn^n$. The proof of Theorem~\ref{thm:hadamard} forces all $|v_i|$ to be equal; conversely, coordinate switching carries the orthogonal optimal cuts of a uniform complete graph to cuts certified by $(a/4)\varepsilon\varepsilon^\top$. This reduction concerns rank-one certificates and does not give a nonnegative Gram normal form for arbitrary signed-weight instances.
\end{remark}

For other orders, parity already limits how many orthogonal sign vectors can exist, independently of any graph. We recall these elementary restrictions before determining their attainability by optimal cuts.

\begin{lemma}\label{lem:arithmetic}
The maximum number $h(n)$ of pairwise orthogonal sign vectors satisfies
\[
h(n)=1\quad\text{if $n$ is odd},
\qquad
h(n)=2\quad\text{if $n\equiv2\pmod4$}.
\]
Moreover, $h(n)=n$ if and only if a Hadamard matrix of order $n$ exists.
\end{lemma}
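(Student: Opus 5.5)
The proof rests on two elementary facts about sign vectors: the inner product of two of them in $\sgn^n$ is congruent to $n$ modulo $2$, and, after normalization, three mutually orthogonal ones force $n\equiv0\pmod4$. The final assertion will follow from the completion argument already used in the proof of Theorem~\ref{thm:hadamard}.

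\textbf{Odd $n$.} For $x,y\in\sgn^n$ we have
\[
\langle x,y\rangle=n-2\,|\{i:x_i\ne y_i\}|,
\]
which is odd when $n$ is odd. Hence no two sign vectors are orthogonal, and $h(n)=1$, since a single vector always qualifies.

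\textbf{Even $n$, lower bound.} Here $\one$ and $(\one_{n/2},-\one_{n/2})$ are orthogonal, so $h(n)\ge2$.

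\textbf{$n\equiv2\pmod4$, upper bound.} Suppose $x,y,z$ are pairwise orthogonal sign vectors. Switch coordinates by $D_x$; this preserves orthogonality and membership in $\sgn^n$, so we may take $x=\one$. Orthogonality to $\one$ makes $y$ and $z$ balanced. Let $a,b,c,d$ count the coordinates where $(y_i,z_i)$ equals $(1,1),(1,-1),(-1,1),(-1,-1)$. Then
\begin{align*}
a+b&=c+d,\\
a+c&=b+d,\\
a+d&=b+c.
\end{align*}
Solving gives $a=b=c=d=n/4$. Thus $4\mid n$, which is impossible, so $h(n)=2$.

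\textbf{Hadamard equivalence.} The rows of a Hadamard matrix of order $n$ are $n$ pairwise orthogonal sign vectors, so a Hadamard matrix gives $h(n)=n$. Conversely, $n$ pairwise orthogonal sign vectors stacked as rows give $HH^\top=nI$, which is precisely the definition of a Hadamard matrix.

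This converse is essentially tautological, so no step here is a real obstacle. The only point needing care is the normalization: switching by $D_x$ must be checked to preserve both orthogonality and the property of being a sign vector, which it does because $D_x$ is an orthogonal diagonal sign matrix.
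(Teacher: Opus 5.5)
Your proposal is correct and follows essentially the paper's route: parity of $\langle x,y\rangle$ for odd $n$; switching so the first vector is $\one$ and counting the agreement patterns of the other two (your four-count linear system is a more explicit form of the paper's $4a-n$ computation); and identifying $n$ pairwise orthogonal sign rows with a Hadamard matrix. The one step you leave implicit is $h(n)\le n$, which you need to conclude $h(n)=n$ rather than only $h(n)\ge n$ from a Hadamard matrix; it holds because pairwise orthogonal nonzero vectors in $\mathbb R^n$ are linearly independent, as the paper states.
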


\begin{proof}
The inner product of two sign vectors has the same parity as $n$, so no orthogonal pair exists when $n$ is odd. For every even $n$, the all-ones vector and any balanced sign vector form an orthogonal pair.

Suppose that three mutually orthogonal sign vectors exist. Coordinate sign changes make the first vector all ones, so the other two are balanced. If $a$ coordinates are positive in both of these vectors, then $a$ coordinates are negative in both, and their inner product is
\[
4a-n.
\]
Orthogonality therefore requires $4\mid n$. This proves the assertion for $n\equiv2\pmod4$.

Finally, at most $n$ nonzero vectors in $\mathbb R^n$ can be mutually orthogonal, and $n$ orthogonal sign vectors are exactly the rows of a Hadamard matrix.
\end{proof}

The graph constraint can impose a stricter bound than the ambient sign-vector bound: for example, $h(2)=2$ but an exact instance on two vertices has only one optimal cut up to sign. For orders congruent to two modulo four and at least six, however, an orthogonal pair can be realized. The construction below uses pendant edges to enlarge the graph without changing the inner product of the selected cuts.

\begin{proposition}\label{prop:orthmaximum}
For $n\ge2$,
\[
\kappa_\perp(n)=
\begin{cases}
1,&n=2\text{ or }n\text{ odd},\\
2,&n\equiv2\pmod4,\ n\ge6,\\
n-1,&n\equiv0\pmod4\text{ and a Hadamard matrix of order $n$ exists}.
\end{cases}
\]
No value is asserted for the remaining multiples of four.
\end{proposition}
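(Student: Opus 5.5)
The plan is to treat the three cases separately. Each case combines an upper bound coming from Lemma~\ref{lem:arithmetic} and Proposition~\ref{prop:face} with an explicit construction. Throughout, $o(W)\le\min\{h(n),\,c(W)\}\le\min\{h(n),n-1\}$, and $o(W)\ge1$ always holds by \eqref{eq:rankbound}.

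For $n$ odd, Lemma~\ref{lem:arithmetic} gives $h(n)=1$, so $\kappa_\perp(n)\le1$. The value $1$ is attained by any exact instance, for example the uniform $K_4$ pattern replaced by an odd-order exact graph such as the path on $n$ vertices. A path is bipartite, hence exact with the bipartition cut. For $n=2$, the only instance is a single weighted edge. Its optimal cut is unique up to sign, and $c(W)\le n-1=1$ forces $o(W)=1$.

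For $n\equiv2\pmod4$ with $n\ge6$, Lemma~\ref{lem:arithmetic} gives $o(W)\le h(n)=2$, so the work is a construction with two orthogonal optimal cuts. I would start from the unit-weight $K_4$ on $\{1,2,3,4\}$, whose balanced cuts $x^{(1)}=(1,1,-1,-1)$ and $x^{(2)}=(1,-1,1,-1)$ are orthogonal, and attach pendant edges to reach $n$ vertices. Since $n-4\equiv2\pmod4$, the new vertices can be organized as an even number of pendants, or as pendant pairs. I would choose the pendant signs so that the extra coordinates contribute zero to the inner product. For instance, attach pendants in pairs at a fixed $K_4$ vertex. Each pendant takes the sign opposite to its neighbor, so the pendant coordinates copy the neighbor coordinate with a minus sign. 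Their contribution to $\langle x^{(1)},x^{(2)}\rangle$ is then $x^{(1)}_vx^{(2)}_v$ times the number of pendants at $v$. Splitting the pendants evenly between a vertex $v$ with $x^{(1)}_vx^{(2)}_v=1$ and a vertex $u$ with product $-1$ cancels these contributions. The number $n-4$ is even, so this split works when $(n-4)/2$ pendants go to each of two vertices.

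Exactness has to be checked via Proposition~\ref{prop:dnn}. I would exhibit a doubly nonnegative slack $Z=Z_{K_4}\oplus(\text{pendant blocks})$, modified so that each pendant edge $\{v,p\}$ contributes a PSD rank-one term $\tfrac{w}{4}(e_v+e_p)(e_v+e_p)^\top$. Each such term kills any sign vector with $x_p=-x_v$. The total $Z=\tfrac14 J_{\{1..4\}}+\sum_p \tfrac{w}{4}(e_{v(p)}+e_p)(e_{v(p)}+e_p)^\top$ is doubly nonnegative and has off-diagonal entries $W_{ij}/4$. Both lifted cuts lie in its kernel, so Proposition~\ref{prop:dnn} gives exactness, with both cuts optimal and orthogonal. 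This gives $o(W)\ge2$.

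For $n\equiv0\pmod4$ with a Hadamard matrix of order $n$, Theorem~\ref{thm:hadamard} gives both the bound $o(W)\le n-1$ and its attainment by $A(K_n)$.

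I expect the main obstacle to be the pendant construction: getting the inner-product bookkeeping right and choosing the attachment vertices correctly. The exactness certificate is immediate once the sum-of-rank-one form of the slack is written down.
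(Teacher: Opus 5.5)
Your proof is correct and follows essentially the same route as the paper. You use the same upper bounds from Lemma~\ref{lem:arithmetic} and Theorem~\ref{thm:hadamard}, bipartite instances for the value one, and the Hadamard case of Theorem~\ref{thm:hadamard}; your unit $K_4$ with $(n-4)/2$ leaves at an agreement vertex and $(n-4)/2$ at a disagreement vertex is exactly what the paper's six-vertex base case plus its repeated four-leaf step produces. The only difference is in certifying exactness: you exhibit the doubly nonnegative slack $\tfrac14J_4$ (on the clique) plus the terms $\tfrac w4(e_{v(p)}+e_p)(e_{v(p)}+e_p)^\top$ and invoke Proposition~\ref{prop:dnn}, whereas the paper bounds the SDP objective piece by piece (the $K_4$ by four, each pendant edge by its weight); these are dual forms of the same argument.
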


\begin{proof}
The upper bounds follow from Lemma~\ref{lem:arithmetic} and Theorem~\ref{thm:hadamard}. Connected bipartite graphs give exact instances at every order $n\ge2$: their bipartition cuts every edge, and the signless Laplacian divided by four is a complementary positive semidefinite certificate. This establishes attainment in the first case.

For $n=6$, take a unit-weight $K_4$ on $\{1,2,3,4\}$ and add the pendant edges $\{1,6\}$ and $\{2,5\}$. The $K_4$ contributes at most four to the SDP objective, and each pendant edge contributes at most one. A balanced cut of the $K_4$ can be extended to cut both pendant edges, so the graph is exact with value six. The two sign vectors
\[
x=(1,1,-1,-1,-1,-1),
\qquad
y=(1,-1,1,-1,1,-1)
\]
are optimal and orthogonal.

To increase the order by four, choose one vertex where $x$ and $y$ agree and one where they disagree. Such vertices exist because $x^\top y=0$. Attach two new leaves to each chosen vertex and extend each cut by assigning every leaf the sign opposite to its parent. The two leaves at the agreement vertex contribute $2$ to the new inner product, while the two at the disagreement vertex contribute $-2$. Orthogonality is preserved. Each new edge attains its individual SDP upper bound in both cuts, so exactness is preserved as well. Iteration gives every order $n\equiv2\pmod4$ with $n\ge6$.

At Hadamard orders, Theorem~\ref{thm:hadamard} gives attainment of $n-1$.
\end{proof}

\begin{remark}
Theorem~\ref{thm:hadamard} makes the Hadamard conjecture equivalent to
\[
o(A(K_n))=n-1
\]
for every positive $n$ divisible by four. Exactness already holds for these complete graphs; the conjecture asks whether their balanced optimal sign vectors contain an orthogonal basis of $\one^\perp$ \cite{horadam2007hadamard}.
\end{remark}

The maximum in Proposition~\ref{prop:orthmaximum} need not be attained by the complete graph. For $K_6$, all optimal sign vectors are balanced. Any two balanced sign vectors in dimension six have inner product congruent to two modulo four, so none are orthogonal. Thus
\[
c(A(K_6))=5,
\qquad
o(A(K_6))=1,
\]
whereas the six-vertex graph in the proof above has an orthogonal optimal pair.

This distinction also appears locally in the unweighted recognition-hardness construction of \cite{BhardwajHardness}, which is assembled from edge-disjoint copies of $K_6$, one per clause. A block contributes
\[
9-\frac14\left(\sum_{i=1}^6x_i\right)^2
\]
to the cut value. Its optimal sign vectors are therefore precisely the balanced ones. Each block individually has full cut-span but no orthogonal optimal pair.

We now drop the orthogonality requirement. By Corollary~\ref{cor:maxspan}, instances with $c(W)=n-1$ have product weights determined by a positive vector $v$ whose perpendicular hyperplane is spanned by sign vectors. Multiplying $v$ by a positive scalar only rescales the edge weights, so the relevant object is its direction. Although $v$ initially has arbitrary real coordinates, the spanning condition forces this direction to have an integer representative. Cofactors give both that representative and a bound on its coordinates.

The argument is elementary. Related questions about hyperplanes through cube vertices and the size of their integer normals are studied in \cite{AlonVu97}; only the cofactor and determinant estimates below are needed here.

\begin{proposition}\label{prop:cofactor}
Suppose that $n\ge2$, $v\in\mathbb R_{>0}^n$, and $v^\perp$ is spanned by sign vectors. Then the positive normal direction has a unique primitive integer representative $p\in\mathbb Z_{>0}^n$, and
\begin{equation}\label{eq:detbound}
\max_i p_i
\le D_{01}(n-2)
\le
\frac{(n-1)^{(n-1)/2}}{2^{n-2}}.
\end{equation}
Consequently, for each fixed $n$, only finitely many such positive directions exist, even before identifying vertex permutations.
\end{proposition}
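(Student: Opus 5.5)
Since $v^\perp$ is spanned by sign vectors, I will choose $n-1$ linearly independent sign vectors $x^{(1)},\ldots,x^{(n-1)}\in v^\perp$ and let $M\in\{-1,1\}^{(n-1)\times n}$ be the matrix with these rows. It has rank $n-1$, so its kernel is the line $\mathbb R v$. The generalized cross product supplies an explicit kernel vector: let $q_i=(-1)^i\det M_{\hat i}$, where $M_{\hat i}$ deletes column $i$. Laplace expansion of the singular $n\times n$ matrix obtained by repeating a row of $M$ gives $Mq=0$, and $q\ne0$ because $M$ has full row rank. Hence $q\in\mathbb Z^n$ is a nonzero multiple of $v$. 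Since $v>0$, all $q_i$ are nonzero with a common sign, and dividing $\pm q$ by the gcd of its entries produces a primitive positive integer vector $p$ on the ray of $v$. Uniqueness holds because two primitive integer vectors on the same open ray coincide.

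To obtain the bound, I will reduce $(n-1)\times(n-1)$ sign determinants to $(0,1)$ determinants of order $n-2$ by the classical map. Given $N\in\{-1,1\}^{(n-1)\times(n-1)}$, I first normalize its first column to all ones by row negations, which changes the determinant only by a sign. I then subtract the first row from each other row. The resulting rows $2,\ldots,n-1$ have first entry $0$ and other entries in $\{0,-2\}$, so expanding along the first column gives $|\det N|=2^{n-2}|\det N'|$ with $N'\in\{0,1\}^{(n-2)\times(n-2)}$. Therefore $|q_i|\le 2^{n-2}D_{01}(n-2)$.

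The main obstacle is removing the factor $2^{n-2}$ in passing from $q$ to $p$. My plan is to show that $2^{n-2}$ divides every $q_i$, so that $p$ divides $q/2^{n-2}$ coordinatewise. This gives $\max_ip_i\le\max_i|q_i|/2^{n-2}\le D_{01}(n-2)$. For the divisibility, I will write each sign row as $x=\one-2\chi$ with $\chi\in\{0,1\}^n$. In $M_{\hat i}$, I subtract the first row from the others, which turns each later row into $2(\chi^{(1)}-\chi^{(j)})$ restricted to the columns other than $i$. Factoring $2$ from each of these $n-2$ rows then exhibits $2^{n-2}\mid\det M_{\hat i}$. This is the same row reduction as above, now applied to all $i$ simultaneously, and it needs no normalization because the operations act on rows of $M$.

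For the final inequality, I will use the Hadamard bound $|\det N|\le (n-1)^{(n-1)/2}$ for $\pm1$ matrices of order $n-1$, together with the bijective correspondence above, which gives $D_{01}(n-2)\le (n-1)^{(n-1)/2}/2^{n-2}$. For $n=2$ the bound reads $D_{01}(0)=1$, which matches $p=(1,1)$. Finiteness follows because $p$ lies in the finite box $\{1,\ldots,D_{01}(n-2)\}^n$, and the direction determines $p$ uniquely.
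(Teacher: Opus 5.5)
Your proof follows the paper's argument. Both take the cofactor normal of a sign basis of $v^\perp$, show that every cofactor is divisible by $2^{n-2}$ so the primitive vector loses that factor, reduce via $|\det N|=2^{n-2}|\det N'|$ to binary matrices of order $n-2$, and finish with a bordered matrix and Hadamard's inequality. Your divisibility step, which writes $x=\one-2\chi$ and works directly on $M_{\hat i}$, is a slightly cleaner variant. The paper instead reads divisibility off the normalized reduction.

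One step needs repair. Row negations only make the first column all ones. The first row is still an arbitrary sign vector, so after you subtract it the entries are $N_{jk}-N_{1k}\in\{-2,0,2\}$, not $\{0,-2\}$, and $N'$ need not be binary. The fix is to also negate columns, which again only changes the sign of the determinant, so that the first row is all ones, as the paper does. Then:
\begin{itemize}
\item the entries lie in $\{0,-2\}$ and $N'$ is a $0$--$1$ matrix;
\item the normalized sign matrices are exactly the bordered matrices with lower-right block $J_{n-2}-2N'$;
\item this is the bijection you invoke to obtain $D_{01}(n-2)\le(n-1)^{(n-1)/2}/2^{n-2}$.
\end{itemize}
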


\begin{proof}
For $n=2$, the existence of a sign vector orthogonal to $v>0$ forces $v_1=v_2$. The primitive representative is $(1,1)$, and the bound holds with $D_{01}(0)=1$. Assume henceforth that $n\ge3$.

Choose $n-1$ independent sign vectors spanning $v^\perp$ and make them the rows of
\[
A\in\{-1,1\}^{(n-1)\times n}.
\]
For each $i$, let $A_{\widehat i}$ be the matrix obtained by deleting column $i$. Cofactor expansion gives a nonzero integer kernel vector with coordinates
\[
\eta_i=(-1)^i\det A_{\widehat i}.
\]
Since $\ker A=\spn\{v\}$ and $v>0$, every coordinate of $\eta$ is nonzero and they all have the same sign. Choose the common sign to make $\eta>0$, and divide by
\[
g=\gcd(\eta_1,\ldots,\eta_n)
\]
to obtain the primitive representative $p=\eta/g$.

To bound its coordinates, consider any $(n-1)\times(n-1)$ sign matrix $M$. Negating rows and columns preserves the absolute determinant and allows its first row and first column to be made all ones. Subtracting the first row from every other row and expanding along the first column gives
\[
|\det M|=2^{n-2}|\det N|
\]
for a binary matrix $N$ of order $n-2$. In particular, every cofactor $\eta_i$ is divisible by $2^{n-2}$ and satisfies
\[
|\eta_i|\le 2^{n-2}D_{01}(n-2).
\]
Thus $g\ge2^{n-2}$, and division by $g$ proves
\[
p_i\le D_{01}(n-2).
\]

For the second inequality, start with an arbitrary binary matrix $N$ of order $n-2$ and border it to form the sign matrix
\[
M_N=
\begin{pmatrix}
1&\one^\top\\
\one&J_{n-2}-2N
\end{pmatrix}.
\]
The same row subtraction gives
\[
|\det M_N|=2^{n-2}|\det N|.
\]
Every row of $M_N$ has Euclidean norm $\sqrt{n-1}$, so Hadamard's determinant inequality yields
\[
2^{n-2}|\det N|
\le (n-1)^{(n-1)/2}.
\]
Maximizing over $N$ proves the second inequality in \eqref{eq:detbound}.

Uniqueness follows because the positive normal direction is one-dimensional and has only one positive primitive integer representative. The coordinate bound leaves only finitely many possible representatives.
\end{proof}

Proposition~\ref{prop:cofactor} turns classification of maximal cut-span directions into a finite exhaustive procedure. Enumerate primitive positive integer vectors within the coordinate bound, identify permutations if desired, and apply Theorem~\ref{thm:multiplicity} to test whether their perpendicular hyperplanes are spanned by sign vectors. The resulting search may be large, but its completeness follows from the bound.

Using maximal binary determinants can be substantially sharper than using Hadamard's inequality alone. The classical values
\[
D_{01}(4)=3,\qquad
D_{01}(5)=5,\qquad
D_{01}(6)=9
\]
give coordinate bounds $3$, $5$, and $9$ at orders $n=6,7,8$, respectively; see \cite[Tables~1--2]{BrentOsborn13}. Each bound is attained by a direction in Table~\ref{tab:directions}.

Attainment at larger orders requires more than a large determinant. A positive normal is formed from all maximal minors of one sign matrix, with compatible signs, and its primitive coordinates are obtained only after dividing by their common divisor. Maximal determinants bound the individual cofactors; whether an admissible positive normal can retain a coordinate attaining the determinant bound after division by the common divisor remains a separate question.

\section{Exact finite classifications}
\label{sec:computations}
\subsection{Unweighted graphs with orthogonal optimal cuts}
\label{sec:census}
The accompanying script \texttt{verify\_\allowbreak manuscript.py} enumerates labelled simple graphs for $2\le n\le6$, tests connectivity, and enumerates all sign vectors with first coordinate $+1$. Cut values and orthogonality are computed with integers. For a connected graph with an orthogonal pair of maximum cuts, it chooses one maximum cut $x$ and tests the integer matrix
\[
 4Z_x=W+\Diag\big(-x\circ(Wx)\big)
\]
for positive semidefiniteness by rational Schur complements. Thus the classification uses no numerical eigenvalue tolerance and no SDP solver. The test is valid by Proposition~\ref{prop:certificate}. Positive instances are identified up to isomorphism by minimizing their adjacency encoding over all vertex permutations.

\begin{table}[ht]
\centering
\begin{tabular}{@{}c c c c@{}}
\toprule
$n$ & \small Connected labelled graphs & \small Exact with $o(W)\ge2$ & \small Isomorphism classes\\
\midrule
2&1&0&0\\
3&4&0&0\\
4&38&1&1\\
5&728&0&0\\
6&26704&180&1\\
\bottomrule\\
\end{tabular}
\caption{\centering \small Certified enumeration of simple unweighted graphs. The second and third columns count labelled graphs; the fourth counts isomorphism classes.}
\label{tab:census}
\end{table}

The four-vertex class is $K_4$. The six-vertex class is the graph in the proof of Proposition~\ref{prop:orthmaximum}: a $K_4$ with one leaf attached to each of two distinct clique vertices. The $180$ labellings also follow from $6!/4$, since its automorphism group has order four. The total number of connected labelled graphs considered is $27475$; the one-vertex graph is excluded by the standing assumptions. Odd orders cannot support an orthogonal pair, independently of exactness.

\subsection{Primitive positive maximal cut-span directions}
By Corollary~\ref{cor:maxspan}, maximal cut-span instances have product weights $W_{ij}=4v_iv_j$. Proposition~\ref{prop:cofactor} supplies a primitive positive integer normal $p$ with $\max_ip_i\le D_{01}(n-2)$. Writing $b_n=D_{01}(n-2)$, the values $D_{01}(j)=1,1,1,2,3,5,9$ for $0\le j\le6$ give sufficient coordinate bounds for a complete search through order eight \cite{BrentOsborn13}.

The script \texttt{enumerate\_\allowbreak directions.py} enumerates nondecreasing primitive positive integer vectors using the general integer bound $\lfloor (n-1)^{(n-1)/2}/2^{n-2}\rfloor$, which is at least $b_n$. It retains only vectors with even coordinate sum, since $p^\top x\equiv\sum_i p_i\pmod2$ for every sign vector $x$. For each candidate, it lists the kernel signs with first coordinate $+1$ and tests whether their exact rational rank is $n-1$. This verifies the defining condition independently of the multiplicity formula. The output records the bound used in the enumeration; Table~\ref{tab:directions} also displays the sharper sufficient bounds $b_n$.

\begin{table}[ht]
\centering
\begin{tabular}{@{}ccccc@{}}
\toprule
$n$ & \small Hadamard bound & $b_n$ & \small Directions & \small Largest coordinate\\
\midrule
2 & 1.00 & 1 & 1 & 1\\
3 & 1.00 & 1 & 0 & --\\
4 & 1.30 & 1 & 1 & 1\\
5 & 2.00 & 2 & 1 & 2\\
6 & 3.49 & 3 & 4 & 3\\
7 & 6.75 & 5 & 14 & 5\\
8 & 14.18 & 9 & 122 & 9\\
\bottomrule\\
\end{tabular}
\caption{\small Complete enumeration of primitive positive integer directions $p$ with $\sigma(p^\top)=n-1$, up to coordinate permutation. The second column displays the general Hadamard upper bound rounded to two decimals; $b_n$ is the sharper sufficient coordinate bound obtained from maximal determinants. Every nonempty row attains $b_n$.}
\label{tab:directions}
\end{table}

The four directions at $n=6$ are
\[
 (1,1,1,1,1,1),\quad(1,1,1,1,1,3),\quad
 (1,1,1,1,2,2),\quad(1,1,1,2,2,3),
\]
and $(1,2,3,4,5,6,7,8)$ and $(1,1,2,2,3,4,4,9)$ are among the $122$ directions at $n=8$. The complete lists for every $2\le n\le8$ are supplied in \texttt{direction\_\allowbreak results.json}; only selected directions are displayed here for $n=8$. The enumeration reported in Table~\ref{tab:census} is a different computation: it concerns simple unweighted graphs on at most six vertices.

\section{Conclusion and further questions}
\label{sec:discussion}
In this work, we studied how much of the kernel of an optimal Max-Cut dual certificate is captured by the optimal cut vectors. The multiplicity formula provides a way to measure their span, and replication turns this calculation into a construction: every doubly nonnegative matrix with positive diagonal and rational kernel can be embedded in a certificate whose kernel is fully spanned by optimal cuts. Yet this spanning property leaves room for considerable variety, both in the factorization of the certificate and in the geometry of the optimal face. The following questions ask how these possibilities change when the size, support, or arithmetic structure of the instance is constrained.

\begin{enumerate}
\item How small can a sign-spanned replication be? Proposition~\ref{prop:replicationbound} bounds the uniform threshold $\kappa(B)$ by maximal minors, but the integer vectors in $\ker B$ may yield sharper bounds. Allowing nonuniform multiplicities leads to the problem of minimizing the total number of columns. Example~\ref{ex:horn14} shows that this can improve on uniform even replication. Beyond that particular factor, what is the minimum order of a rank-three non-completely-positive sign-spanned exact instance?

\item Which certificates can be realized without enlarging the ambient space? More specifically, which sign-spanned subspaces are the exact kernels of doubly nonnegative matrices with positive diagonal, and which support graphs admit such matrices of a prescribed rank? The realization theorem permits both enlargement and nonuniform edge weights. It therefore leaves open whether a simple unweighted exact graph can have a non-completely-positive optimal dual slack of rank three, particularly one whose kernel is spanned by optimal cuts.

\item Under what additional assumptions can $\sigma(Y)$ be computed in polynomial time for an explicitly supplied rational matrix? For a positive integer row, deciding whether $\sigma(Y)>0$ is the PARTITION problem. A fixed number of distinct columns and total unimodularity provide tractable cases, as shown after Theorem~\ref{thm:multiplicity} and in Proposition~\ref{prop:tu}. Further tractable classes would extend the computational usefulness of the formula.

\item How sharp is the determinant bound for positive normal directions? Proposition~\ref{prop:cofactor} bounds the largest coordinate of a primitive positive integer normal by $D_{01}(n-2)$, and every nonempty row of Table~\ref{tab:directions} attains this bound. Does attainment persist in higher dimensions, and what can force a gap? Determining the growth of the number of admissible directions up to coordinate permutation is a related problem.

\item What restrictions do particular graph classes impose on cut-span dimension, dual nullity, and optimal-face dimension? The examples in Section~\ref{sec:faces} distinguish these quantities, while Section~\ref{sec:hull} characterizes equality between the cut hull and the optimal face under sign-spanning. It would be useful to determine which combinations of these dimensions occur within natural graph classes.
\end{enumerate}

\subsection*{Reproducibility}
The supplementary source package contains the LaTeX and bibliography files, three standard-library Python scripts, their JSON outputs, and a README giving the commands, scope of each check, and output-comparison procedure. The graph census and original example checks are implemented in \texttt{verify\_\allowbreak manuscript.py}, with output in \texttt{verification\_\allowbreak results.json}. The script \texttt{enumerate\_\allowbreak directions.py} performs the direction census. Its output in \texttt{direction\_\allowbreak results.json} lists all primitive directions through $n=8$.

The script \texttt{verify\_\allowbreak replication.py} checks the fourteen-vertex example, its uniform threshold, and all positive multiplicity vectors of total size at most fourteen for the displayed five-column factor. It records its results in \texttt{replication\_\allowbreak results.json}, including the three minimizing multiplicity vectors. It also checks the connected construction of Corollary~\ref{cor:nonormalform} at ranks $3$ through $8$, verifying positive semidefiniteness, rank, kernel preservation, connected support, and the negative copositive pairing; these finite checks supplement the proof for all ranks. These computations use integer and rational arithmetic, fixed enumeration orders, and no random seed or external solver. They supplement the proofs and are confined to the finite statements explicitly reported above.

\appendix
\section{A support restriction}
\label{sec:supportspectral}
A large cut-span dimension forces the support graph to be dense. The reason is that an independent set corresponds to mutually orthogonal nonzero columns of a dual Gram factor, so its size cannot exceed the dual rank. This observation is standard in the positive semidefinite minimum-rank framework \cite{FallatHogben07}; combined with an elementary independence-number bound, it gives the following consequence.

\begin{proposition}\label{prop:density}
For a nonnegatively weighted exact instance without isolated vertices,
\[
\alpha(G)\le d(W),\qquad
|E(G)|\ge\frac12\left(\frac{n^2}{d(W)}-n\right)
\ge\frac12\left(\frac{n^2}{n-c(W)}-n\right).
\]
\end{proposition}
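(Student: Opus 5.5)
The argument has three steps: bound the independence number by the dual rank, convert that into an edge count, and then substitute the cut-span bound.

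\emph{Step 1: $\alpha(G)\le d(W)$.} By Proposition~\ref{prop:dnn}, factor the optimal slack as $Z^*=Y^\top Y$, where $Y$ has $d(W)$ rows and every column $y_i$ is nonzero. For distinct $i,j$ we have $W_{ij}=4\langle y_i,y_j\rangle$. So if $I$ is an independent set of the support graph $G$, the vectors $\{y_i:i\in I\}$ are nonzero and pairwise orthogonal in $\mathbb R^{d(W)}$. They are therefore linearly independent, which gives $|I|\le d(W)$.

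\emph{Step 2: the edge count.} We use the Caro--Wei bound
\[
\alpha(G)\ge\sum_i\frac1{\deg(i)+1},
\]
which follows from the standard greedy or random-ordering argument. Convexity of $t\mapsto 1/(t+1)$ and Jensen's inequality then give
\[
\alpha(G)\ge\frac{n}{\bar d+1},\qquad \bar d=\frac{2|E(G)|}{n}.
\]
Combining this with Step 1 yields $n/(\bar d+1)\le d(W)$. Rearranging gives $2|E(G)|/n\ge n/d(W)-1$, which is the first inequality. Equivalently, this is the Turán-type bound on the complement.

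\emph{Step 3: the second inequality.} By \eqref{eq:rankbound}, $d(W)\le n-c(W)$, and $d(W)\ge1$, so $n-c(W)\ge1$ and the quotient is defined. Since $n^2/d$ is decreasing in $d$, the first bound implies the second.

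The only step needing care is the lower bound on $\alpha$. Caro--Wei with Jensen is standard, and I would state it with a brief proof: take a uniformly random ordering of the vertices and keep each vertex that precedes all of its neighbours. The kept vertices form an independent set, and vertex $i$ is kept with probability $1/(\deg(i)+1)$, so the expected size is the Caro--Wei sum.
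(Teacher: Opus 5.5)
Your proof is correct and follows the paper's argument essentially step for step. You use the orthogonality of the nonzero Gram columns indexed by an independent set to get $\alpha(G)\le d(W)$, the random-ordering Caro--Wei bound to convert this into an edge count, and $d(W)\le n-c(W)$ for the second inequality. The only cosmetic difference is that you apply Jensen's inequality to $t\mapsto 1/(t+1)$, whereas the paper uses Cauchy--Schwarz to obtain $\sum_i 1/(\deg(i)+1)\ge n^2/(2|E(G)|+n)$; the two give the same bound.
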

\begin{proof}
Factor the optimal slack as $Z^*=Y^\top Y$, with $Y$ having $d(W)$ rows. Columns indexed by an independent set are nonzero and mutually orthogonal, giving $\alpha(G)\le d(W)$.

To relate this to the number of edges, choose a uniformly random ordering of the vertices and select each vertex that precedes all its neighbours. The selected vertices form an independent set, and vertex $i$ is selected with probability $1/(\deg(i)+1)$. Hence, by Cauchy--Schwarz,
\[
d(W)\ge\alpha(G)\ge
\sum_{i=1}^n\frac1{\deg(i)+1}
\ge\frac{n^2}{2|E(G)|+n}.
\]
Rearranging gives the first edge bound. The second follows from $d(W)\le n-c(W)$.
\end{proof}

Both edge bounds are sharp. Let $G$ be the disjoint union of $d$ unit-weight complete graphs, each of the same even order $m\ge2$, so that $n=dm$. Each component has optimal slack $J_m/4$, and therefore the full slack has rank $d$. The optimal cut vectors are balanced on every component. Since balanced signs span $\one^\perp$ within each component and the component signs can be reversed independently, the full cut span is the direct sum of these $d$ spaces. Thus
\[
d(W)=d,\qquad c(W)=d(m-1)=n-d,
\]
while
\[
|E(G)|=d\binom m2
=\frac12\left(\frac{n^2}{d}-n\right)
=\frac12\left(\frac{n^2}{n-c(W)}-n\right).
\]

\bibliographystyle{plain}
\bibliography{References}

@book{de1997geometry,
  author    = {Deza, Michel M. and Laurent, Monique},
  title     = {Geometry of Cuts and Metrics},
  publisher = {Springer Science \& Business Media},
  volume    = {15},
  year      = {1997}
}

@article{DP93_maxcut,
  title={Laplacian eigenvalues and the maximum cut problem},
  author={Delorme, Charles and Poljak, Svatopluk},
  journal={Mathematical Programming},
  volume={62},
  number={1-3},
  pages={557--574},
  year={1993},
  publisher={Springer}
}

@article{goemans1995improved,
  title={Improved approximation algorithms for maximum cut and satisfiability problems using semidefinite programming},
  author={Goemans, Michel X. and Williamson, David P.},
  journal={Journal of the Association for Computing Machinery},
  volume={42},
  number={6},
  pages={1115--1145},
  year={1995},
  publisher={ACM}
}

@article{LP95,
  title={On a positive semidefinite relaxation of the cut polytope},
  author={Laurent, Monique and Poljak, Svatopluk},
  journal={Linear Algebra and its Applications},
  volume={223--224},
  pages={439--461},
  year={1995},
  publisher={Elsevier}
}

@misc{Bhardwaj2X,
  title={On exactness of {SDP} relaxation for the maximum cut problem},
  author={Bhardwaj, Avinash and Gogoi, Hritiz and Narayanan, Vishnu and Pathapati, Abhishek},
  year={2025},
  eprint={2505.05200},
  archivePrefix={arXiv},
  primaryClass={math.OC},
  howpublished={\href{https://arxiv.org/abs/2505.05200v8}{arXiv:2505.05200v8}},
  note={Version 8, revised June 29, 2026}
}

@article{MW2X,
  title={Max cut and semidefinite rank},
  author={Mirka, Renee and Williamson, David P.},
  journal={Operations Research Letters},
  volume={53},
  pages={107067},
  year={2024}
}

@book{berman2003completely,
  title={Completely Positive Matrices},
  author={Berman, Abraham and Shaked-Monderer, Naomi},
  year={2003},
  publisher={World Scientific}
}

@book{horadam2007hadamard,
  title={Hadamard Matrices and Their Applications},
  author={Horadam, Kathy J.},
  year={2007},
  publisher={Princeton University Press}
}

@article{DP93_complexity,
  title={Combinatorial properties and the complexity of a max-cut approximation},
  author={Delorme, Charles and Poljak, Svatopluk},
  journal={European Journal of Combinatorics},
  volume={14},
  number={4},
  pages={313--333},
  year={1993},
  publisher={Elsevier}
}

@misc{HLW21,
  title={Optimal solutions and ranks in the max-cut {SDP}},
  author={Hong, Daniel and Lee, Hyunwoo and Wei, Alex},
  year={2021},
  eprint={2109.02238},
  archivePrefix={arXiv},
  primaryClass={math.OC},
  howpublished={\href{https://arxiv.org/abs/2109.02238}{arXiv:2109.02238}}
}

@misc{BhardwajHardness,
  title={The complexity of recognizing {SDP} exactness for the maximum cut problem},
  author={Bhardwaj, Avinash},
  year={2026},
  eprint={2609.03508},
  archivePrefix={arXiv},
  primaryClass={math.OC},
  howpublished={\href{https://arxiv.org/abs/2609.03508}{arXiv:2609.03508}}
}

@article{LP96,
  author={Laurent, Monique and Poljak, Svatopluk},
  title={On the facial structure of the set of correlation matrices},
  journal={SIAM Journal on Matrix Analysis and Applications},
  volume={17}, 
  number={3}, 
  pages={530--547}, 
  year={1996}
}

@article{deCarliSilvaTuncel19,
  author={de Carli Silva, Marcel K. and Tun{\c{c}}el, Levent},
  title={Strict complementarity in semidefinite optimization with elliptopes including the {MaxCut SDP}},
  journal={SIAM Journal on Optimization}, 
  volume={29}, 
  number={4},
  pages={2650--2676}, 
  year={2019}
}

@article{BrandtsKrizek16,
  author={Brandts, Jan and K{\v r}{\'i}{\v z}ek, Michal},
  title={Factorization of {CP}-rank-3 completely positive matrices},
  journal={Czechoslovak Mathematical Journal}, 
  volume={66}, 
  number={3},
  pages={955--970}, 
  year={2016}
}

@article{Tropp18,
  author  = {Tropp, Joel A.},
  title   = {Simplicial Faces of the Set of Correlation Matrices},
  journal = {Discrete \& Computational Geometry},
  volume  = {60},
  number  = {2},
  pages   = {512--529},
  year    = {2018},
  doi     = {10.1007/s00454-017-9961-0}
}

@article{FallatHogben07,
  author={Fallat, Shaun M. and Hogben, Leslie},
  title={The minimum rank of symmetric matrices described by a graph: A survey},
  journal={Linear Algebra and its Applications},
  volume={426},
  number={2--3},
  pages={558--582},
  year={2007},
  doi={10.1016/j.laa.2007.05.036}
}

@article{GoldbergerStrassler22,
  author={Goldberger, Assaf and Strassler, Yossi},
  title={A practical algorithm for completing half-{Hadamard} matrices using {LLL}},
  journal={Journal of Algebraic Combinatorics},
  volume={55},
  pages={217--244},
  year={2022},
  doi={10.1007/s10801-021-01077-z}
}

@article{AlonVu97,
  author={Alon, Noga and Vu, Van H.},
  title={Anti-{Hadamard} matrices, coin weighing, threshold gates, and indecomposable hypergraphs},
  journal={Journal of Combinatorial Theory, Series A},
  volume={79},
  number={1},
  pages={133--160},
  year={1997}
}

@article{RamanaGoldman95,
  author  = {Ramana, Motakuri and Goldman, Alan J.},
  title   = {Some geometric results in semidefinite programming},
  journal = {Journal of Global Optimization},
  volume  = {7},
  pages   = {33--50},
  year    = {1995},
  doi     = {10.1007/BF01100204}
}

@article{BRS15,
  author  = {Bhardwaj, Avinash and Rostalski, Philipp and Sanyal, Raman},
  title   = {Deciding polyhedrality of spectrahedra},
  journal = {SIAM Journal on Optimization},
  volume  = {25},
  number  = {3},
  pages   = {1873--1884},
  year    = {2015},
  doi     = {10.1137/120904172}
}

@article{KuengTropp21,
 author = {Kueng, Richard and Tropp, Joel A.},
 title = {Binary Component Decomposition Part {I}: The Positive-Semidefinite Case},
 journal = {SIAM Journal on Mathematics of Data Science},
 volume = {3}, number = {2}, pages = {544--572}, year = {2021},
 doi = {10.1137/19M1278612}
}

@article{BrentOsborn13,
  author = {Brent, Richard P. and Osborn, Judy-anne H.},
  title = {On minors of maximal determinant matrices},
  journal = {Journal of Integer Sequences},
  volume = {16},
  year = {2013},
  note = {Article 13.4.2},
  howpublished = {\href{https://arxiv.org/abs/1208.3819v3}{arXiv:1208.3819v3}}
}

@incollection{Margot10,
  author = {Margot, Fran{\c c}ois},
  title = {Symmetry in Integer Linear Programming},
  booktitle = {50 Years of Integer Programming 1958--2008},
  publisher = {Springer},
  year = {2010},
  pages = {647--686},
  doi = {10.1007/978-3-540-68279-0_17}
}

@techreport{Matsui94,
  author = {Matsui, Tomomi},
  title = {{NP}-Completeness of Non-Adjacency Relations on Some 0--1 Polytopes},
  institution = {Department of Mathematical Engineering and Information Physics, University of Tokyo},
  number = {METR 94-12},
  year = {1994},
  howpublished = {\href{https://www.keisu.t.u-tokyo.ac.jp/data/1994/METR94-12.pdf}{Technical report}}
}

@article{LP96Gap,
  author = {Laurent, Monique and Poljak, Svatopluk},
  title = {Gap inequalities for the cut polytope},
  journal = {European Journal of Combinatorics},
  volume = {17},
  number = {2--3},
  pages = {233--254},
  year = {1996}
}

@article{HerrRehnSchurmann13,
  author = {Herr, Katrin and Rehn, Thomas and Sch{\"u}rmann, Achill},
  title = {Exploiting symmetry in integer convex optimization using core points},
  journal = {Operations Research Letters},
  volume = {41},
  pages = {298--304},
  year = {2013},
  doi = {10.1016/j.orl.2013.02.007}
}

@article{Bulutoglu23,
title = {Finding the dimension of a non-empty orthogonal array polytope},
journal = {Discrete Optimization},
volume = {45},
pages = {100727},
year = {2022},
issn = {1572-5286},
author = {Dursun A. Bulutoglu}
}

@article{Lee97,
  author  = {Lee, Eva K.},
  title   = {On Facets of Knapsack Equality Polytopes},
  journal = {Journal of Optimization Theory and Applications},
  volume  = {94},
  number  = {1},
  pages   = {223--239},
  year    = {1997}
}
\end{document}